\def\dl{\delta}
\def\ep{\epsilon}
\def\Im{{\rm{Im}}\,}
\def\G{\Gamma}
\DeclareMathOperator{\sgn}{sgn}
\DeclareMathOperator{\ran}{ran}
\DeclareMathOperator{\Tr}{Tr}
\newcommand{\R}{{\mathbb{R}}}
\newcommand{\C}{{\mathbb{C}}}
\newcommand{\Z}{{\mathbb{Z}}}
\newcommand{\N}{{\mathbb{N}}}
\newcommand{\T}{{\mathbb{T}}}
\newtheorem{df}{Definition}
\newtheorem{lem}{Lemma}
\newtheorem{prop}{Proposition}
\newtheorem{rem}{Remark}
\newtheorem{cor}{Corollary}
\newtheorem{thm}{Theorem}
\title{Stability of traveling waves for the Burgers--Hilbert equation}
\author{\'Angel Castro\footnote{Corresponding author: angel\_castro@icmat.es}, Diego C\'ordoba\footnote{dcg@icmat.es} and Fan Zheng\footnote{fan.zheng@icmat.es} \\ \\
\small Instituto de Ciencias Matem\'aticas ICMAT-CSIC-UAM-UCM-UC3M\\ \small 28049, Madrid, Spain.
}
\date{}
\begin{document}
\maketitle

\begin{abstract}
We consider smooth solutions of the Burgers-Hilbert equation that are
a small perturbation  $\delta$ from a global periodic traveling wave with small amplitude  $\epsilon$. We use a modified energy method to prove the existence time of smooth solutions on a time scale of $\frac{1}{\epsilon\delta}$ with $0<\delta\ll\ep\ll1$ and on a time scale of $\frac{\epsilon}{\delta^2}$ with $0<\delta\ll\ep^2\ll1$. Moreover, we show that the traveling wave exists for an amplitude  $\ep$ in the range $(0,\ep^*)$ with $\ep^*\sim 0.23$ and fails to exist for $\ep>\frac{2}{e}$.
\end{abstract}

\vskip 0.3cm

\textit{Keywords: Burgers-Hilbert, normal forms, traveling waves.}

\section{Introduction}
\subsection{The Burger-Hilbert equation (BH).} In this paper we study the size and stability of traveling waves of the Burgers-Hilbert equation (BH),
\begin{align}
&f_t =  Hf + ff_x, \qquad \text{for $(x,t)\in \Omega\times \R$}\label{BH}\\
&f(x,0)= f_0(x).
\end{align}
where $\Omega$ is the real line $\R$ or the torus $\T=\R/2\pi\Z$ and $Hf$ is the Hilbert transform which is defined for $f : \R$ (resp. $\T) \longrightarrow \R$ by
\[
Hf(x) = \frac{1}{\pi} P.V. \int_{\R}\frac{f(y)}{x - y} dy \quad\text{resp. }
Hf(x) = \frac{1}{2\pi} P.V. \int_0^{2\pi} f(y)\cot\frac{x - y}{2} dy.
\]
Its action in the frequency space is $\widehat{Hf}(k) = -i\sgn k\hat f(k)$,
for $k \neq 0$, and $\widehat{Hf}(0) = 0$.

This equation arised in \cite{Marsden-W} as a quadratic approximation for the evolution of the boundary of a simply connected vorticity patch in 2D. Later, Biello and Hunter, in \cite{Biello}, proposed the model as an approximation for describing the dynamics of small slope vorticity fronts in the two-dimensional incompressible Euler equations. Recently, the validity of this approximation is proved in \cite{Hunter-MSZ}.

By standard energy estimates the initial value problem for (BH) is locally well posed in $H^s$ for $s>\frac32$. Bressan and Nguyen established in \cite{Bressan-N} global existence of weak solutions for initial data $f_0\in L^2(\R)$ with $f(x,t)\in L^\infty(\R)\cap L^2(\R)$ for all $t>0$.  Bressan and Zhang constructed, in \cite{Bressan-Z}, locally in time piecewise continuous solutions to the BH equation with a single discontinuity where the Hilbert transform generates a logarithmic singularity. Uniqueness for general global weak solutions of \cite{Bressan-N} is open. But piecewise continuos solutions are shown to be unique in \cite{vasseur} by Krupa and Vasseur.

Burgers--Hilbert equation can indeed form shocks in finite time. Various numerical simulations have been performed in \cite{Biello,Hunter,Klein}.
Finite time singularities, in the $C^{1,\delta}$ norm with $0<\delta<1$,  was shown to exist in \cite{CCG} for initial data $f_0$ in $L^2(\R)\cap C^{1,\delta}(\R)$
that has a point $x_0\in \R$ such that $H(f_0)(x_0) > 0$ and $f_0(x_0) \geq (32\pi||f_0||_{L^2})^{\frac13}$. Recently, with a different approach, Saut and Wang \cite{Saut-W} proved shock formation in finite time for (BH) and Yang \cite{Yang} constructed solutions that develop an asymptotic self-similar shock at one single point with an explicitly computable blowup profile for (BH).

In this paper we are concerned with the dynamics in the small amplitude regime where (BH) can be viewed as a perturbation of the linearized (BH) equation $f_t = H[f]$. Since the nonlinear term in \eqref{BH} is quadratic and the Hilbert transform is orthogonal in $L^2$, standard energy estimates yield a time of existence of smooth solutions $T\sim \frac{1}{||f_0||}$. Thanks to the effect of the Hilbert transform and using the normal form method, Hunter, Ifrim, Tataru and Wong (see \cite{Hunter-I} and \cite{Hunter-I-T}) were able to improve this time of existence. More precisely, if $\ep$ is the size of the initial data, they prove a lifespan $T\sim \frac{1}{\ep^2}$ for small enough $\ep$ (see also \cite{EW} for a similar approach with a modified version of  the (BH) equation). The proofs are based on the normal form method and in the modified energy method. Furthermore, Hunter \cite{Hunter} showed for $0<\ep \ll 1$ the existence  of $C^\infty$-traveling wave solutions of the form  $$f_\ep(x,t)=u_\ep(x+v_\ep t)$$ with
\begin{align}
u_\ep(x)&=\ep \cos(x)+O(\ep^2),\label{travelling}\\
v_\ep & = -1+O(\ep^2)\label{speed},
\end{align}
Notice that, $\left(\frac{1}{n}u_\ep(nx),\frac{1}{n}v_\ep\right)$ is also a $C^\infty-$traveling wave solution.

Throughout the paper we will assume that the initial data $f_0$ has zero mean. Since the equation (\ref{BH}) preserves the mean,
\[
\int_0^{2\pi}f(x,t)dx=0
\]
for all $t$. Since in the construction above $u_\ep$ also has zero mean,
\[
\int_0^{2\pi}f(x,t)dx=0
\]
for all $t$.

\subsection{The main theorem.} In the present work we extend the results in the small amplitude regime in the following way:
\begin{enumerate}
\item \underline{Size of the traveling waves:} we show that the traveling wave exist for an amplitude  $\ep$ in the range $(0,\ep^*)$ with $\ep^*\sim 0.23$ and fails to exist for $\ep>\frac{2}{e}$.
\item \underline{Extended lifespan from a traveling wave:} we prove that a $\delta-$perturbation of $u_\ep$ lives, at least, for a time $T\sim \frac{1}{\delta\ep}$, for $0<\delta\ll\ep\ll1$, and for a time $T\sim \frac{\epsilon}{\delta^2}$ for $0<\delta\ll\ep^2\ll1$. This is an improvement compared with the time  $T\sim \frac{1}{\ep^2}$ provided by the results in \cite{Hunter-I} and \cite{Hunter-I-T}. Indeed, our main theorem reads:

\end{enumerate}
\begin{rem}
After publication of this paper, Dahne and G\'omez-Serrano have found out exactly the maximum amplitude of the traveling waves, which corresponds to $\ep = 0.54771699\dots$ in our parametrization \cite{DaGS}.
\end{rem}

\begin{thm}\label{mainthm}For $0<|\ep|,\, \delta\ll1$ let $(u_\ep, v_\ep)\in  C^\infty(\T)\times \R$  be a traveling wave solution of equation \eqref{BH} as in \eqref{travelling} and \eqref{speed} and $$||f_0-u_\ep||_{H^4(\T)}<\delta.$$ Then  there exist, $0<\ep_0\ll1$, $T(\ep,\delta)>0$ and a solution of equation \eqref{BH}
\begin{align*}
f(x,t)\in C\left([0,T(\ep,\delta));\, H^{4}(\T)\right),
\end{align*}
such that:
\begin{enumerate}
\item  if $\delta\ll|\ep|$ and $|\ep|\leq \ep_0$,  $T(\ep,\delta)\sim \frac{1}{\ep\delta},$
\item if $\delta\ll\ep^2$ and $|\ep|\leq \ep_0$,  $T(\ep,\delta)\sim \frac{\ep}{\delta^2}.$
\end{enumerate}
Moreover, there are two differentiable functions $\ep(t)$ and $a(t)$ such that
\[
\|f(x, t) - u_{\ep(t)}(x + a(t))\|_{H^4} \lesssim \delta.
\]
\end{thm}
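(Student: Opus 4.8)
\emph{Strategy.} The plan is to combine a modulation (orbital‑stability) decomposition — with coercivity coming from the Hamiltonian structure of \eqref{BH} — and a modified $H^4$ energy that symmetrizes the linearization around $u_\epsilon$ and absorbs the quadratic nonlinearity via a normal form, pushing the lifespan past the $\epsilon^{-2}$ barrier of \cite{Hunter-I,Hunter-I-T}. Write
\[
f(x,t)=u_{\epsilon(t)}(z)+w(z,t),\qquad z=x+a(t),
\]
with modulation parameters $\epsilon(t),a(t)$ to be fixed. Using the profile identity $Hu_\epsilon+u_\epsilon u_\epsilon'=v_\epsilon u_\epsilon'$, the perturbation solves
\[
w_t=\mathcal L_\epsilon w+ww_z-\dot\epsilon\,\partial_\epsilon u_\epsilon-(\dot a-v_\epsilon)\big(u_\epsilon'+w_z\big),\qquad \mathcal L_\epsilon w:=Hw+\partial_z\big((u_\epsilon-v_\epsilon)w\big).
\]
Three structural facts are used. (i) $\mathcal L_\epsilon=\mathcal L_0+O(\epsilon)$ with $\mathcal L_0=H+\partial_z$ skew‑adjoint on $L^2$ (symbol $i(k-\sgn k)$, purely imaginary), so the obstruction to skew‑adjointness of $\mathcal L_\epsilon$ is $O(\epsilon)$. (ii) \eqref{BH} is Hamiltonian, $f_t=\partial_x(\delta\mathcal H/\delta f)$ with $\mathcal H[f]=\tfrac16\int f^3-\tfrac12\langle f,\Lambda^{-1}f\rangle$, $\Lambda:=|\partial_x|$, and $\mathcal P[f]=\tfrac12\int f^2$ conserved; in the moving frame $f_t=\partial_z\mathcal E_\epsilon'[f]$ with $\mathcal E_\epsilon:=\mathcal H-v_\epsilon\mathcal P$, so $\mathcal E_\epsilon'[u_\epsilon]$ is constant and $\mathcal E_\epsilon''[u_\epsilon]=-\Lambda^{-1}+(u_\epsilon-v_\epsilon)$ is self‑adjoint, has (for $|\epsilon|$ small) kernel approximately $\operatorname{span}\{\cos z,\sin z\}=\ker\mathcal L_0$, and is positive definite on the orthogonal complement. (iii) \eqref{BH} has no quadratic resonances, with a uniform gap: $|\sgn k_1+\sgn k_2-\sgn(k_1+k_2)|\ge1$ for nonzero $k_1,k_2,k_1+k_2$.

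\emph{Modulation.} Since $\mathcal L_0$ is skew, $\ker\mathcal L_0^*=\ker\mathcal L_0=\operatorname{span}\{\cos z,\sin z\}$, and $\ker\mathcal L_\epsilon^*$ stays two‑dimensional, spanned by $\psi_j^\epsilon=\psi_j^0+O(\epsilon)$. Fix $\epsilon(t),a(t)$ by $\langle w(\cdot,t),\psi_j^\epsilon\rangle=0$, $j=1,2$. Differentiating in time and using $\mathcal L_\epsilon^*\psi_j^\epsilon=0$ removes the linear contribution of $\mathcal L_\epsilon w$ and leaves a $2\times2$ linear system for $(\dot\epsilon,\dot a-v_\epsilon)$ whose matrix is, to leading order, the Gram matrix of $\{\partial_\epsilon u_\epsilon,\,u_\epsilon'+w_z\}$ against $\{\psi_1^\epsilon,\psi_2^\epsilon\}$, invertible for $|\epsilon|,\|w\|$ small (non‑degeneracy $\Leftrightarrow\tfrac{d}{dv}\mathcal P[u_v]\ne0$ along the branch); solving it gives $|\dot\epsilon|+|\dot a-v_\epsilon|\lesssim\|w\|_{H^4}^2$ (up to $\epsilon$‑dependent constants from the $O(\epsilon)$ smallness of $u_\epsilon'$). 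Moreover, since $\mathcal P$ is conserved and $\|u_\epsilon\|_{L^2}^2=\pi\epsilon^2+O(\epsilon^3)$, the first orthogonality condition pins $\epsilon(t)^2$ to $\epsilon(0)^2$ up to $O(\epsilon^2\|w\|+\|w\|^2)$, so $|\epsilon(t)-\epsilon(0)|\ll\epsilon$ on the time scales below; thus $u_{\epsilon(t)}$ stays in the existence range of traveling waves, and continuation follows from the standard $H^s$, $s>3/2$, well‑posedness of \eqref{BH}.

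\emph{The modified energy (the crux).} At the bottom of the hierarchy take $E_0(w):=\langle\mathcal E_\epsilon''[u_\epsilon]w,w\rangle$: it is \emph{exactly} preserved by the linearized flow $w_t=\mathcal L_\epsilon w=\partial_z\mathcal E_\epsilon''w$, by (ii) and the modulation conditions $E_0(w)\approx\|w\|_{L^2}^2$, and along the full equation $\tfrac{d}{dt}E_0$ only sees the cubic term $\langle Hw,w^2\rangle$ from $ww_z$ (removable by a bounded cubic correction, the divisor $\sum(k_j-\sgn k_j)=-\sum\sgn k_j=\mp1$ being $O(1)$) plus modulation terms that are small because $\mathcal E_\epsilon''\partial_\epsilon u_\epsilon,\ \mathcal E_\epsilon''u_\epsilon'$ are $O(\epsilon)$ and $\perp w$ to leading order. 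For the top order take $\int a_\epsilon(z)\,|\partial_z^4w|^2\,dz$ with $a_\epsilon:=(u_\epsilon-v_\epsilon)^{-1}=1+O(\epsilon)$, so $\partial_z(a_\epsilon(u_\epsilon-v_\epsilon))=0$ and the order‑one transport part of $\mathcal L_\epsilon$ is symmetrized at top order; then add lower‑order $O(\epsilon)$ corrections (built from $[a_\epsilon,H]\mathcal L_0^{-1}$ and iterates, off $\ker\mathcal L_0$) symmetrizing the remaining part of $\mathcal L_\epsilon$, and a cubic‑in‑$w$ correction realizing the quadratic normal form \emph{at the level of the functional} — as in \cite{Hunter-I,Hunter-I-T}, avoiding the derivative loss of the naive change of variables. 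The outcome is a functional $E_4(w)=\|w\|_{H^4}^2+O_\epsilon(\|w\|_{H^4}^2)+O(\|w\|_{H^4}^3)$, equivalent to $\|w\|_{H^4}^2$ with $\epsilon$‑uniform constants, satisfying after all the built‑in cancellations
\[
\frac{d}{dt}E_4(w)\ \lesssim\ \epsilon\,\|w\|_{H^4}^3+\|w\|_{H^4}^4,
\]
the modulation terms entering at the same two orders (using that $\partial_\epsilon u_\epsilon,u_\epsilon'$ sit essentially at frequency $1$, to which $w$ is nearly orthogonal). Carrying out this construction — symmetrizing the \emph{quasilinear}, $O(\epsilon)$‑perturbed operator $\mathcal L_\epsilon$ at every order $0\le j\le4$, with coefficients known only asymptotically through \eqref{travelling}--\eqref{speed}, so that every error must be tracked quantitatively in $\epsilon,\delta$, while keeping the normal form compatible with the neutral modes and the modulation and keeping $E_4$ coercive — is the main obstacle of the proof.

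\emph{Bootstrap and the two regimes.} Run a continuity argument with $\|w(t)\|_{H^4}\le C\delta$. When $\delta\ll\epsilon$, the term $\epsilon\|w\|^3$ dominates, and the Bernoulli inequality $\tfrac{d}{dt}y\lesssim\epsilon\,y^{3/2}$ with $y=\|w\|_{H^4}^2$ keeps $y\lesssim\delta^2$ for $t\lesssim(\epsilon\delta)^{-1}$, which gives case (1). In the subregime $\delta\ll\epsilon^2$ one performs one further reduction of $E_4$ trading the $\epsilon\|w\|^3$ term for a $\tfrac1\epsilon\|w\|^4$ term — the inverse power of $\epsilon$ being the cost of a corrector that must invert the (near‑)resonances created by the $O(\epsilon)$ traveling‑wave background (e.g.\ $\sgn1=\sgn2$, so the frequency‑$1$ shift by $u_\epsilon$ couples resonantly) — which is an actual gain precisely when $\tfrac1\epsilon\delta^4\ll\epsilon\delta^3$, i.e.\ $\delta\ll\epsilon^2$; then $\tfrac{d}{dt}y\lesssim\tfrac1\epsilon y^2$ keeps $y\lesssim\delta^2$ for $t\lesssim\epsilon\,\delta^{-2}$, which gives case (2). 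In both cases closing the bootstrap produces $f\in C([0,T);H^4)$ with $\|f(x,t)-u_{\epsilon(t)}(x+a(t))\|_{H^4}=\|w(t)\|_{H^4}\lesssim\delta$, and $\epsilon(\cdot),a(\cdot)$ differentiable by construction; it is exactly the accounting of which $O(\epsilon)$ cubic‑in‑$w$ terms are total derivatives or absorbable by the modulation, and which force the extra reduction, that fixes the two time scales and the hypothesis $\delta\ll\epsilon^2$ in case (2).
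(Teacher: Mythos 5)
Your proposal shares the paper's overall scaffolding---a modulation ansatz that projects $w$ off the two neutral directions $\{u_\ep',\partial_\ep u_\ep\}$, a decomposition of the quadratic terms into a ``classical'' product part with $O(1)$ divisors and an $O(\ep)$-small part, and a two-stage energy argument whose Bernoulli integrations produce the two time scales $1/(\ep\delta)$ and $\ep/\delta^2$. But the implementation you propose is genuinely different from the paper's, and in the place where the second regime is won or lost there is a real gap.

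\textbf{What is different.} The paper never works with a modified $H^4$ energy for $w$ in physical variables. Instead it conjugates the linearization to a Fourier multiplier in three explicit steps: $g=h_x$, then $h=\tilde h\circ\phi_\ep$ with $\phi_\ep'$ proportional to $(u_\ep-v_\ep)^{-1}$ (straightening the transport term so that $L_\ep$ becomes $c_\ep\partial_x+H+R_\ep$ with $R_\ep$ a smoothing operator of size $O(\ep)$), then $\mathfrak h=(1+W_\ep)\tilde h$ where $W_\ep$ is a wave-operator built from the Riesz spectral projections (Lemma~\ref{conjugado}). After that the flow is $\mathfrak h_t=\Lambda_\ep\mathfrak h+N_\ep[\mathfrak h,\mathfrak h]+\dots$ with $\Lambda_\ep$ a multiplier and $N_\ep[\mathfrak h,\mathfrak h]=\tfrac12\mathfrak h_x^2+O(\ep)$, and the energy is simply $\|\mathfrak h\|_{\dot H^k}^2$, coercive for free. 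Your physical-space symmetrizer $\int a_\ep|\partial_z^4w|^2$ with $a_\ep=(u_\ep-v_\ep)^{-1}$ implements the same change of variables at top order, and your Hamiltonian $E_0=\langle\mathcal E_\ep''w,w\rangle$ is extra machinery the paper does not need---its coercivity on the modulated complement requires a Grillakis--Shatah--Strauss negative-index count that you assert but do not verify (and note $\ker\mathcal L_\ep^*$ is generically one-dimensional; the two modulation conditions must be taken against the generalized kernel, as the paper does with $P_0^\pm(\ep)$).

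\textbf{Where the gap is.} You flag the construction of the modified energy as ``the main obstacle'' and do not carry it out; that by itself makes this a sketch, not a proof. More specifically, the $\delta\ll\ep^2$ regime hinges on the quantitative lower bound
\[
|\lambda_l(\ep)+\lambda_m(\ep)-\lambda_n(\ep)|\gtrsim\ep^2 \quad\text{when } l+m=n,\ lmn\neq 0,
\]
which is what makes the normal form on the $O(\ep)$ part cost only a factor $\ep^{-1}$ (coefficient $O(\ep)$ divided by divisor $\gtrsim\ep^2$). The paper obtains this in Prop.~\ref{non-res} by first proving, via Kato analytic perturbation theory (Prop.~\ref{lambda-Taylor}, Cor.~\ref{lambda-remainder}), that $\lambda_n(\ep)=(n+\sgn n)c_\ep i-i\sgn n+O(\ep^{2|n|+4})$ with $c_\ep=1-\ep^2/4+O(\ep^3)$, so the resonant combination reduces to $(\sgn l+\sgn m-\sgn n)(c_\ep-1)i$ up to $O(\ep^6)$. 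Your account of this step is purely heuristic (``the frequency-$1$ shift by $u_\ep$ couples resonantly''); you never compute the divisor, and in your physical-space framework you also have no handle on the eigenvalues of $L_\ep$, so the corrector that would realize the second reduction cannot be estimated. Without this input, your bootstrap in case (2) does not close, and the improvement from $1/(\ep\delta)$ to $\ep/\delta^2$ is not reached.
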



\subsection{Sketch of the proof of theorem \ref{mainthm}} Now we briefly describe the proof of theorem \ref{mainthm}.
Assume that the solution
\[
f(x, t) = u_\ep(x + v_\ep t) + g(x + v_\ep t, t)
\]
is a small perturbation around the traveling wave $u_\ep(x + v_\ep t)$.
Then the linearization of the Burgers-Hilbert equation (\ref{BH}) is
\[
L_\ep g := -v_\ep g_x + Hg + (u_\ep(x)g)_x = 0
\]
so to the first order, the perturbation $g$ solves the equation $g_t = L_\ep g$, with solution
\[
g(x, t) = e^{tL_\ep}g(x, 0).
\]
Therefore the linear evolution of $g$ is determined by the eigenvalues of $L_\ep$.

The full nonlinear evolution of $g$ is
\[
g_t = L_\ep g + N(g, g)
\]
where $N(g, g)$ is a nonlinearity that is (at least) quadratic in $g$.
We plug in the linear solution to get
\[
g_t = e^{tL_\ep}L_\ep g(x, 0) + N(e^{tL_\ep}g(x, 0), e^{tL_\ep}g(x, 0))
\]
to second order, which integrates to
\[
g(x, t) = e^{tL_\ep}g(x, 0) + e^{tL_\ep}\int_0^t e^{-sL_\ep}N(e^{sL_\ep}g(x, 0), e^{sL_\ep}g(x, 0))ds.
\]
Expand (at least formally) the initial data and the nonlinearity in terms of the eigenvectors of $L_\ep$ as
\[
g(x, 0) = \sum_n c_n\varphi_n(x), \quad N(\varphi_l, \varphi_m) = \sum_n c_{lmn} \varphi_n,
\]
where the eigenvalue of $\varphi_n$ is $\lambda_n$. Then
\begin{equation}\label{2nd-order}
g(x, t) \approx \sum_n c_ne^{\lambda_nt}\varphi_n(x) + \sum_{l,m,n} \frac{e^{(\lambda_l+\lambda_m)t} - e^{\lambda_nt}}{\lambda_l + \lambda_m - \lambda_n}c_{lmn}\varphi_n(x)
\end{equation}
to second order, provided that the denominator $\lambda_l + \lambda_m - \lambda_n \neq 0$, i.e., that the eigenvalues are ``non-resonant".
Then we can integrate the equation (\ref{BH}) up to a cubic error term,
yielding the ``cubic lifespan", i.e., initial data of size $\ep$ leads to a solution that exists for a time at least comparable to $\ep^{-2}$.
This is the ``normal form transformation", first proposed by Poincar\'e in the setting of ordinary differential equations (see \cite{Arnold} for a book reference). Its application to partial differential equations was initiated by Shatah \cite{Shatah} in the study of the nonlinear Klein--Gordon equation, and then extended to the water wave problem by Germain--Masmoudi--Shatah \cite{GeMaSh2,GeMaSh3} and Ionescu--Pusateri \cite{IoPu,IoPu2}, the Burgers--Hilbert equation by Hunter--Ifrim--Tataru--Wang \cite{Hunter-I-T}, and more recently, the Einstein--Klein--Gordon equation \cite{IoPaEKG} by Ionescu--Pausader.

Unfortunately, non-resonance fails for $L_\ep$ because 0 is an eigenvalue,
and $0 + \lambda_n - \lambda_n = 0$.
The eigenvalue 0 arises from the symmetry of the equation (\ref{BH}).
Indeed, the initial data $u_\ep(x + \delta) \approx u_\ep(x) + \delta u_\ep'(x)$ produces the solution
\[
f(x, t) = u_\ep(x + v_\ep t + \delta) \approx u_\ep(x + v_\ep t) + \delta u_\ep'(x + v_\ep t).
\]
In this case $g(x, t) = \delta u_\ep'(x)$ with $g_t = 0$, so $u_\ep' \in \ker L_\ep$. Also, the initial data $u_{\ep+\delta}(x) \approx u_\ep(x) + \delta\partial_\ep u_\ep(x)$ produces the solution
\[
f(x, t) = u_{\ep+\delta}(x + v_{\ep+\delta}t) \approx u_\ep(x + v_\ep t) + \delta\partial_\ep u_\ep(x + v_\ep t) + \delta v_\ep'tu_\ep'(x + v_\ep t).
\]
In this case $g(x, t) = \delta\partial_\ep u_\ep(x) + \delta v_\ep'tu_\ep'(x)$, so
\[
L_\ep g = \delta L_\ep\partial_\ep u_\ep = g_t = \delta v_\ep'u_\ep' \in \ker L_\ep
\]
so $\partial_\ep u_\ep$ is in the generalized eigenspace corresponding to the eigenvalue 0.

These perturbations generate translations and variations along the bifurcation curve. We treat them separately using a more sophisticated ansatz
\[
f(x, t) = u_{\ep(t)}(x + a(t)) + g(x + a(t), t).
\]
We will show in Proposition \ref{new-frame} that if $|\ep_0|$ and $\|f - u_{\ep_0}\|_{H^2}/|\ep_0|$ are sufficiently small, then $f$ can always be put in the form above, with $|\ep - \ep_0|/|\ep_0|$ also small, and the expansion of $g$ not involving any eigenvector with eigenvalue 0. This way we removed the resonance caused by the eigenvalue 0 from the evolution of $g$.

We also need to analyze the other eigenvalues of $L_\ep$,
a first order differential operator with variable coefficients,
and a quasilinear perturbation from $L_0 = \partial_x + H$,
whose eigenvectors are the Fourier modes $e^{inx}$.
Just like the Schr\"odinger operator with potential $-\Delta + V$,
with a basis of eigenvectors known as the ``Jost functions",
giving rise to the ``distorted Fourier transform" (see Agmon \cite{Agmon}), $L_\ep$ can also be diagonalized using a combination of conjugation and perturbative analysis. More precisely, let $g = h_x$. Then
\[
L_\ep g = ((u_\ep(x) - v_\ep)g)_x + Hg = ((u_\ep(x) - v_\ep)h_x + Hh)_x
\]
so $L_\ep$ is conjugate to the operator $h \mapsto (u_\ep(x) - v_\ep)h_x + Hh$. Let $h = \tilde h \circ \phi_\ep$ where $\phi_\ep'(x)$ is proportional to $(u_\ep(x) - v_\ep)^{-1}$. Then
\[
L_\ep g = ((c_\ep\partial_x + H + R_\ep)\tilde h \circ \phi_\ep)_x
\]
where $c_\ep \to 1$ as $\ep \to 0$, and $R_\ep$ is a small smoothing remainder (i.e., it gains derivatives of arbitrarily high orders).
Thus $L_\ep$ is conjugate to $c_\ep\partial_x + H + R_\ep$,
whose eigenvalues can be approximated by those of $c_\ep\partial_x + H$,
which are $\pm(nc_\ep i - i)$, $n = 1, 2, \dots$. The general theory of unbounded analytic operators developed in \cite{kato} allows us to justify this approximation up to $O(\ep^6)$ (see Corollary \ref{lambda-remainder}), and to relate the eigenvectors of $L_\ep$ to the Fourier modes (see Lemma \ref{conjugado}), in the sense that another linear map $\tilde h \mapsto \mathfrak h$ conjugates $L_\ep$ into a Fourier multiplier whose action on $e^{i(n+\sgn n)x}$ is multiplication by $\lambda_n$ ($n \neq 0$).

At the end of the day we have the following estimate for small $\ep$:
\[
|\lambda_l + \lambda_m - \lambda_n| >
\begin{cases}
1/2, & l + m \neq n,\\
\ep^2/5, & l + m = n,
\end{cases}
\]
see Proposition \ref{non-res}. Because this value appears in the denominator in (\ref{2nd-order}), if $g$ has size $\delta$, a direct application of the normal form transformation yields a lifespan comparable to $\ep^2/\delta^2$. To improve on this, we will make use of the structure of the nonlinearity:
\[
N(\mathfrak h, \mathfrak h) = \frac{1}{2}\mathfrak h_x^2 + O(|\ep|).
\]
The first term is the usual product-style nonlinearity, which imposes the restriction $l + \sgn l + m + \sgn m = n + \sgn n$, and implies that $l + m - n = \pm 1 \neq 0$, so the normal form transformation can be carried out as before. The second term is of size $|\ep|$ and gains a factor of $1/|\ep|$ in the lifespan. Thus the usual energy estimate can show a lifespan comparable to $1/|\ep\delta|$, and the normal form transformation can show a lifespan comparable to $|\ep|/\delta^2$. This decomposition of the nonlinearity into one part satisfying classical additive frequency restrictions, and another part enjoying better estimates analytically was first used in Germain--Pusateri--Rousset \cite{Germain-PR} to show global wellposedness of the 1D Schr\"odinger equation with potential (see also Chen--Pusateri \cite{Chen-P}). Our result shows that this approach can be adapted to quasilinear equations and to the case of discrete spectrum.

\subsection{The outline of the paper.} In section \ref{TW} we study the traveling waves solutions for $\eqref{BH}$. For sake of completeness we sketch the proof of existence which follows from bifurcation theory. In addition we  analyze the size of the traveling waves.  In section \ref{LTW} we study the linearization of equation \eqref{BH} around the traveling waves. In section \ref{NFR}, we introduce a new frame of references which will help us to avoid the resonances found in section \ref{LTW}. Finally, in section \ref{EE} we prove theorem \ref{mainthm}.

\section{Traveling waves}\label{TW}
The existence of traveling waves for \eqref{BH} was shown in \cite{Hunter}. Here we will study their size after we give some details about the existence proof. We look for solutions of \eqref{BH} in the form
\[
f_\ep(x, t) = u_\ep(x + v_\ep t),
\]
thus we have to find $(u_\ep,\, v_\ep)$ solving
\begin{align}\label{traveleq}
Hu_\ep - v_\ep u_\ep' + u_\ep u_\ep' = 0.
\end{align}
If $(u_\ep, v_\ep)$ is a solution, so is $(u_{\ep}^n(x), v_{\ep}^n) = (u_\ep(nx)/n, v_\ep/n)$. Thus from one solution we can get $n$-fold symmetric solutions for all $n \ge 1$.

To solve \eqref{traveleq}  we can apply the Crandall-Rabinowitz  theorem (see \cite{Crandall}) to
\begin{align*}
&F\,:\, \qquad (u, \mu)\qquad \quad\,\mapsto Hu + uu' - (-1 + \mu)u',\\
&\qquad H_r^{k,+}(\T) \times \C\qquad \to H_r^{k-1,-}(\T),
\end{align*}
where
\begin{align*}
&H^{k,+}_r(\T)\\
&=\left\{\text{$2\pi$-periodic, mean zero, and even functions, analytic in the strip $\{|\text{Im}(z)|<r\}$}\right\}
\end{align*}
endowed with the norm
\begin{align*}
||f||_{H^{k,+}_r(\T)}=\sum_{\pm} ||f(\cdot\pm i r)||_{H^k(\T)},
\end{align*}
 and
\begin{align*}
&H^{k,-}_r(\T)\\
&=\left\{\text{$2\pi$-periodic, and odd functions, analytic in the strip $\{|\text{Im}(z)|<r\}$}\right\}
\end{align*}
endowed with the norm
\begin{align*}
||f||_{H^{k,-}_r(\T)}=\sum_{\pm} ||f(\cdot\pm i r)||_{H^k(\T)}.
\end{align*}
Here
$||\cdot||_{H^k(\T)}$ is the usual Sobolev norm, and it is enough to take $k\geq 1$ and $r=1$.

We just notice that $F(0,\mu)=0$ and  the derivative of $F$ at $u=0$, $\mu=0$,
$$D_uF(0,0)h=Hh+h'$$
 just has a non trivial element in its kernel belonging to $H^{k,+}_r(\T)$, namely, $h=\cos(x)$.

 Thus, the  application of the C-R theorem allows to show the existence of a branch of  solutions $(u_\ep, v_\ep)\in (H^{1,+}_1,\R)$, bifurcating from $(0,-1)$, for \eqref{traveleq} with the assymptotic
\begin{align*}
 u_\ep(x)&=\ep \cos(x)+O(\ep^2)\\
 v_\ep&=-1+O(\ep).
\end{align*}

We remark that we obtain a bifurcation curve
\begin{align}
\ep\qquad \qquad & \to\qquad \quad (u_\ep, v_\ep)\nonumber\\
B_{\delta}=\{z\in \C\,:\, |z|<\delta\} &\to \qquad\left(H^{k-1,-}_r,\R\right)\label{analyticcurve}
\end{align}
which is differentiable and hence analytic on $B_\delta$ for $\delta$ small enough.

The rest of this section is devoted to proving further properties of these solutions.

Introducing the asymptotic expansion
\begin{align}\label{series}
u_\ep(x)&=\sum_{n=1}^\infty u_n(x)\ep^n, &
v_\ep&=\sum_{n=0}^\infty v_n \ep^n,
\end{align}
taking $u_1=\cos(x)$, $\lambda_0=-1$ and comparing the coefficient in $\ep^n$ we obtain that
\begin{align*}
u_n'+Hu_n=-v_{n-1}\sin(x)+\sum_{m=1}^{n-2}v_{m}u'_{n-m}-\frac{1}{2}\partial_x\sum_{m=1}^{n-1}u_{n-m}u_m=-v_{n-1}\sin(x)+f_n,
\end{align*}
for $n=2,3,...$

We notice that in order to solve the equation $Hu+u'=f$ we need $(f,\sin(x))=0$. Therefore we have to choose $v_{n-1}=\frac{1}{\pi}(\sin(x),f_n)$. This gives us a recurrence for $(u_n,v_{n-1})$, $n\geq 2$, in terms of $\{(u_m,v_{m-1})\}_{m=1}^{n-1}$. In order to study this recurrence we will introduce the ansatz
\begin{align}\label{subu}
u_n=\sum_{k=2}^n u_{n,k}\cos(kx).
\end{align}
By induction, one can check that the rest of coefficients in the expansion on cosines of $u_n$ must be zero. In addition, if $u_{\ep}(x)$ solves \eqref{traveleq},  $u_{-\ep}(x+\pi)$ is also a bifurcation curves in the direction of $\cos(x)$, and then by uniqueness, $u_{\ep}(x)=u_{-\ep}(x+\pi)$, which yields $u_{n,k}=0$ if $n-k=1$, mod$(2)$.

Comparing the coefficient of $\sin(kx)$, with $k=n$ mod$(2)$, and $2\leq k \leq n$, we have that
\begin{align}\label{coefu}
&(1-k)u_{n,k}+k\sum_{m=1}^{n-k} v_{m}u_{n-m,k}-\frac{k}{4}\sum_{m=1}^{n-1}\sum_{l=\max(1,k-n+m)}^{\min(m,k-1)}
u_{m,l}u_{n-m,k-l}\\&-\frac{k}{2}\sum_{m=1}^{n-1}\sum_{l=1}^{\min(m,n-m-k)}
u_{m,l}u_{n-m,k+l}=0\nonumber.
\end{align}
And comparing with $\sin(x)$ we have that
\begin{align}\label{coefv}
v_{n-1}=\frac{1}{2}\sum_{m=1}^{n-1}\sum_{l=1}^{\min(m,n-m-1)}u_{m,l}u_{n-m,1+l}.
\end{align}

Up to order $O(\ep^4)$ we find that

\begin{equation}\label{uv-Taylor-exp}
\begin{aligned}
u_\ep(x) &= \ep\cos x - \frac{\ep^2}{2}\cos2x + \frac{3\ep^3}{8}\cos3x + O(\ep^4),\\
v_\ep &= -1 - \frac{\ep^2}{4} + O(\ep^4).
\end{aligned}
\end{equation}

The  recurrence \eqref{coefu}-\eqref{coefv} allows us to prove the following result.

\begin{thm}The radius of convergence of the series \eqref{series}, with the coefficients given by the expression \eqref{subu}, \eqref{coefv} and \eqref{coefu} is not bigger than $\frac{2}{e}$.
\end{thm}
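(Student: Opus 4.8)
The plan is to track only the top Fourier coefficient $a_n := u_{n,n}$ of $u_n$, for which the recurrence \eqref{coefu} collapses to a clean quadratic recursion, and then to identify its generating function with the functional inverse of $B \mapsto B e^{-B/2}$, whose first singularity sits at $2/e$.

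First, I would specialize \eqref{coefu} to $k = n$ (admissible since $n \equiv n \pmod 2$ and $2 \le n \le n$ for $n \ge 2$). The term $k\sum_{m=1}^{n-k} v_m u_{n-m,k}$ is then an empty sum; in the final double sum $\min(m, n-m-k) = \min(m,-m) < 0$, so it too is empty; and in the middle double sum the constraints $\max(1, k-n+m) \le l \le \min(m, k-1)$ force $l = m$, leaving $u_{m,m} u_{n-m,n-m}$. Hence \eqref{coefu} becomes
\[
(1-n)a_n - \frac{n}{4}\sum_{m=1}^{n-1} a_m a_{n-m} = 0, \qquad a_1 = 1,
\]
i.e. $a_n = -\tfrac{n}{4(n-1)}\sum_{m=1}^{n-1} a_m a_{n-m}$ for $n \ge 2$. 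A one-line induction gives $\sgn a_n = (-1)^{n-1}$, so setting $b_n := (-1)^{n-1} a_n > 0$ one obtains the positive recursion $b_n = \tfrac{n}{4(n-1)}\sum_{m=1}^{n-1} b_m b_{n-m}$ with $b_1 = 1$.

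Next I would pass to the generating function $B(z) = \sum_{n\ge 1} b_n z^n$. Rewriting the recursion as $4(n-1)b_n = n\sum_{m=1}^{n-1} b_m b_{n-m}$ (valid for all $n \ge 1$) and summing against $z^n$, the convolution becomes $B^2$ while the weights $n$ and $n-1$ turn into $z\frac{d}{dz}$, giving the separable ODE $2zB' - 2B = zBB'$, with $B(z) = z + O(z^2)$ near $0$. Separating variables, $\bigl(\tfrac1B - \tfrac12\bigr)\,dB = \tfrac{dz}{z}$, so $\ln(B/z) = B/2$ once the constant is pinned down by $B/z \to 1$ as $z \to 0$; equivalently
\[
z = B\,e^{-B/2}.
\]
(Alternatively, Lagrange inversion applied to $B = z\,e^{B/2}$ gives the closed form $b_n = \tfrac1n [w^{n-1}] e^{nw/2} = \dfrac{n^{n-2}}{2^{n-1}(n-1)!}$, whence Stirling yields $b_n^{1/n} \to e/2$ directly.)

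Finally I would read off the radius of convergence. The map $\psi(B) = B e^{-B/2}$ increases on $[0,2]$ from $0$ to $2/e$ and decreases afterwards, so $\psi|_{[0,2)}$ is an increasing bijection onto $[0,2/e)$; its inverse agrees on $[0,2/e)$ with the positive increasing power series $B$, hence $B(z) \to 2$ as $z \to (2/e)^-$, while $B'(z) = 1/\psi'(B(z))$ and $\psi'(B) = e^{-B/2}(1 - B/2) \to 0$, so $B'(z) \to \infty$. Thus $B$ is not analytic (not even $C^1$) at $z = 2/e$, so $\sum b_n z^n$ has radius of convergence at most $2/e$ (exactly $2/e$ by Pringsheim). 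Since $b_n = |u_{n,n}|$ is, up to a fixed positive factor, a coefficient in the Fourier expansion of $u_n$, one has $\|u_n\| \ge c\,b_n$ in the norm at hand, so $\limsup \|u_n\|^{1/n} \ge e/2$ and the series $\sum_n u_n \ep^n$ has radius of convergence at most $2/e$. The only mildly delicate points are getting the index ranges in \eqref{coefu} right when $k = n$, and noting that the two descriptions of $B$ — the power series and the inverse of $\psi$ — coincide on the real interval; both are routine, and this last bookkeeping is the closest thing to an obstacle.
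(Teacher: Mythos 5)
Your argument is essentially the paper's: both extract the top Fourier coefficients $u_{n,n}$, observe that equations \eqref{coefu}--\eqref{coefv} collapse at $k=n$ to the quadratic recursion $(1-n)u_{n,n}=\frac{n}{4}\sum_{m=1}^{n-1}u_{m,m}u_{n-m,n-m}$, turn this into the separable ODE for the generating function, and identify the solution as the inverse of $B\mapsto Be^{\mp B/2}$, i.e.\ a rescaled Lambert $W$, whose nearest singularity is at distance $2/e$. The sign-flip to positive coefficients $b_n=(-1)^{n-1}u_{n,n}$, the appeal to Pringsheim, and the Lagrange-inversion closed form $b_n=n^{n-2}/(2^{n-1}(n-1)!)$ are pleasant additions but lead to the same conclusion by the same route.
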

\begin{proof}
From \eqref{coefu} and \eqref{coefv} we have that
\[
(1 - n)u_{n,n} = \frac{1}{2}\sum_{k=1}^{n-1} (n - k)u_{k,k}u_{n-k,n-k}.
\]
Let
\[
y = y(x) = x + \sum_{n=2}^\infty u_{n,n}x^n.
\]
Then
\begin{align*}
y - xy' &= \frac{1}{2}yxy',\\
(2x + xy)y' &= 2y,\\
\frac{(2 + y)y'}{2y} &= \frac{1}{x},\\
\ln y + y/2 &= \ln x + C,\\
ye^{y/2} &= Cx.
\end{align*}
Since $y \sim x$ for small $x$, $C = 1$, so
\begin{align*}
ye^{y/2} &= x,\\
y &= 2W(x/2),
\end{align*}
where $W$ is the Lambert W-function. Since the radius of convergence of $W$ at 0 is $1/e$, the radius of convergence of $y$ at 0 is $2/e$, so the radius of convergence of \eqref{coefu} and \eqref{coefv} is at most $2/e$.
\end{proof}

In addition we can get a bound for how large the traveling wave can be.

\begin{thm}The series \eqref{series}, with the coefficients given by the expression \eqref{subu}, \eqref{coefv} and \eqref{coefu} converge for any $\ep<x^*\sim 0.23$.
\end{thm}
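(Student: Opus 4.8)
The plan is to establish convergence of the series \eqref{series} for $\ep<x^*$ by bounding the coefficients $u_{n,k}$ and $v_{n-1}$ through a comparison (majorant) argument in the spirit of the previous theorem, but now tracking the full two-index family rather than only the diagonal $u_{n,n}$. First I would introduce a generating function of two variables, say $U(x,z)=\sum_{n\ge1}\sum_{k}u_{n,k}x^nz^k$ (with the convention $u_{1,1}=1$), and translate the recurrence \eqref{coefu}--\eqref{coefv} into a functional/differential equation for $U$. The double convolution sums in \eqref{coefu} are exactly the coefficients of products $U\cdot U$ with shifts in the $z$-variable, and the factor $(1-k)$ becomes the operator $1-z\partial_z$, while the factors $k$ and $k/4$, $k/2$ become $z\partial_z$ applied to the corresponding products. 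The term $\sum_m v_m u_{n-m,k}$ couples $U$ to the scalar series $v(x)=\sum_{n\ge0}v_nx^n$, which by \eqref{coefv} is itself quadratic in the $u_{n,k}$. This is the analogue of the ODE $ye^{y/2}=x$ from the diagonal case, only now it is a PDE in $(x,z)$.

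Rather than solving this PDE exactly, I would pass to majorants: replace every coefficient by its absolute value and every minus sign in the recurrence by a plus sign, producing a new recurrence with nonnegative coefficients $\bar u_{n,k}\ge|u_{n,k}|$ that dominates the original one termwise. The key structural point is that after bounding $\sum_k \bar u_{n,k}$ by a single sequence $a_n$ (summing \eqref{coefu} over $k$, using $|1-k|\ge 1$ for $k\ge2$ to absorb the leading coefficient, and using $|v_m|\le \text{(something)}$ from \eqref{coefv}), one obtains a closed scalar recurrence of the form $a_n\lesssim \sum_{j+\ell=n} a_j a_\ell + \sum a_m a_{n-m}\cdot(\text{v-contribution})$. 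This again has a Lambert-type or algebraic generating function, and its radius of convergence is a computable constant; optimizing the constants in the inequalities yields the numerical value $x^*\sim 0.23$. The factor-of-$k$ weights that appear are harmless because the division by $|k-1|\ge 1$ cancels them, so no derivative loss occurs in the majorant.

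The main obstacle I anticipate is bookkeeping the shifted convolutions in \eqref{coefu} — the terms $u_{m,l}u_{n-m,k-l}$ and $u_{m,l}u_{n-m,k+l}$ with their awkward summation ranges $l=\max(1,k-n+m)\dots\min(m,k-1)$ — so that after summing over $k$ they genuinely collapse into $\big(\sum_l \bar u_{m,l}\big)\big(\sum_{l'} \bar u_{n-m,l'}\big)$ without over- or under-counting. One has to check that each product $\bar u_{m,l}\bar u_{n-m,l'}$ is hit a bounded number of times when $k$ ranges over $2\le k\le n$. Once that combinatorial reduction is done, and the contribution of $v_{n-1}$ is folded in (it only makes the constant slightly worse), the rest is the standard argument: the majorant series $\sum a_n x^n$ satisfies an explicit algebraic equation, implicit-function theorem gives a positive radius of convergence $x^*$, and since $\bar u_{n,k}\ge|u_{n,k}|$ the original series \eqref{series} converges at least for $|\ep|<x^*$, with the numerical estimate $x^*\approx 0.23$ obtained by evaluating the relevant constants. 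I would also note that this lower bound $0.23$ together with the upper bound $2/e\approx 0.736$ of the previous theorem brackets the true maximal amplitude, consistent with the value found in \cite{DaGS}.
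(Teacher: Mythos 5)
Your proposed route is genuinely different from the paper's, and I believe it would not deliver the stated constant $x^*\sim0.23$. The paper does \emph{not} work with the recurrence \eqref{coefu}--\eqref{coefv} at all in this proof. Instead it sets up a rescaled bifurcation map $G(\ep,\tilde u,\mu)$ on carefully chosen Hilbert spaces $\mathcal X=X\times\R$ and $L^{2,-}$, where the norm on $X$ is weighted so that $\|\cos nx\|_X=n-1$ and the norm on $L^{2,-}$ so that $\|\sin nx\|_{L^{2,-}}=1$, with the crucial consequence that $dG_{(0,0,0)}:X\times\R\to L^{2,-}$ is an \emph{exact isometry}. It then estimates $\|dG_{(\ep,\tilde u_\ep,\mu_\ep)}-dG_{(0,0,0)}\|$ (Lemmas~\ref{linear-bound}, \ref{bilinear-bound}, \ref{linear-bound2} supply sharp operator-norm bounds with explicit constants $\sqrt3$, $B\approx3.05$, $\sqrt{17}/2$), inverts by Neumann series, and closes a scalar differential inequality for $r_\ep=\sqrt{\|\tilde u_\ep\|_X^2+|\mu_\ep|^2}$. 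The radius $0.23$ is precisely the point where the denominator $1-A_\ep$ in that differential inequality hits zero. Finally the paper passes from ``real-analytic continuation on $(-x^*,x^*)$'' to ``Taylor radius $\ge x^*$'' by noting that the whole argument works with $\ep$ replaced by $\ep e^{ia}$, giving analyticity on a complex disk.

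The gap in your proposal is concretely the numerics. A termwise majorant of \eqref{coefu}--\eqref{coefv} -- replace every sign by a plus, absorb $\frac{k}{k-1}\le2$, set $a_n=\sum_k|u_{n,k}|$ and use \eqref{coefv} for $|v_{n-1}|\le\frac12\sum_m a_m a_{n-m}$ -- leads, after the bookkeeping you anticipate, to something like
\[
a_n \le 2\sum_m|v_m|a_{n-m} + \tfrac32\sum_m a_m a_{n-m},\qquad |v_{n-1}|\le\tfrac12\sum_m a_m a_{n-m},
\]
whose generating function satisfies (at best) $A\le x+\tfrac32 A^2+A^3/x$; the singularity of that algebraic equation occurs well below $x=0.1$. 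Even the pure quadratic part $A=x+\tfrac32 A^2$ already caps the radius near $1/6$. The $\ell^1$ coefficient sum $\sum_k|u_{n,k}|$ is a significantly cruder functional than the weighted $H^1$-type norm $\|\cdot\|_X$: it throws away the $L^2$ orthogonality that makes $dG_{(0,0,0)}$ an isometry and that is exploited to obtain the sharp constants in the three lemmas. So the step ``optimizing the constants in the inequalities yields $x^*\sim0.23$'' is not just unverified -- a direct computation indicates it fails, because the majorant route structurally cannot recover the isometry the paper relies on. If you want to salvage a coefficient-level argument, you would have to work in a weighted $\ell^2$ setting mimicking the $X$ and $L^{2,-}$ norms, at which point you are essentially re-deriving the paper's operator-norm estimates rather than doing a classical majorant; the honest combinatorial majorant only gives a smaller, non-competitive radius.

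One small point in your favor: since a majorant argument bounds the Taylor coefficients directly, it would bypass the paper's final complexification step ($\ep\mapsto\ep e^{ia}$) needed to convert real-analytic continuation into a Taylor radius statement. But since the majorant does not reach $0.23$, this advantage is moot.
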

\begin{proof}
This proof is based on  the implicit funtion theorem.

Firsly we will introduce the spaces $L^{2,-}=\{ \text{odd functions }f\in L^2(\T)\}$, $H^{1,+}=\{\text{even functions } f\in H^1(\T)\}$. The space $X$ will the orthogonal complement of the span of $\cos(x)$ in $H^{1,+}$. We will equip $L^{2,-}$ with the norm \begin{align}\label{normal2}||u||_{L^{2,-}}^2=\frac{1}{\pi}\int_{-\pi}^\pi |u(x)|^2 dx,\end{align} in such a way that $||\sin(nx)||_{L^{2,-}}=1$, for $n\geq 1$. We also define \begin{align}\label{normaX}||u||_{X}^2=\frac{1}{\pi}\int_{-\pi}^\pi \left(|u'(x)|^2+|u(x)|^2-2 u(x)\Lambda u(x)\right)dx.\end{align} Thus $||\cos(nx)||_{X}=n-1$, for $n\geq 2$. The reason why we take these norms is technical and it will arise below. Finally we define
$$\mathcal{X}=X\times \R$$
equipped with the norm $$||(\tilde{u},\nu)||_{\mathcal{X}}=\sqrt{||\tilde{u}||_{X}^2+|\nu|^2}.$$

Since $u_\ep = \ep\cos x - \frac{1}{2}\ep^2\cos2x + O(\ep^3)$ and $v_\ep = -1 + O(\ep^2)$, we can let
\begin{align*}
G(\ep, \tilde u, \mu) &= \frac{1}{\ep^2}F\left(\ep\cos x - \frac{\ep^2}{2}\cos2x + \ep^2\tilde u, \ep\mu \right)\\
&= \frac{1}{\ep}\left( \sin x - \frac{\ep}{2}\sin2x + \ep H\tilde u \right)\\
&- \left( \cos x - \frac{\ep}{2}\cos2x + \ep\tilde u \right)(\sin x - \ep\sin2x - \ep\tilde u')\\
&- \frac{1}{\ep}(1 + \ep\mu)(\sin x - \ep\sin2x - \ep\tilde u')\\
&= H\tilde u + \ep\Big(\cos x(\sin2x + \tilde u')\\
&+ \left. \left( \frac{1}{2}\cos2x - \tilde u \right)(\sin x - \ep\sin2x - \ep\tilde u') \right)\\
&+ \tilde u' - \mu(\sin x - \ep\sin2x - \ep\tilde u')
\end{align*}
map $\R \times \mathcal{X}$ to $L^{2,-}$.

Because the existence of traveling waves we already know that there exists $\ep^*$ such that for every $\ep\in [0,\ep^*)$, there exist $\tilde{u}_\ep$ and $\mu_\ep$ satisfying
$$G(\ep,\tilde{u}_\ep,\mu_\ep)=0.$$
In addition we have that
\begin{align*}
& \left.\frac{dG(\ep, \tilde{u}_\ep+s \tilde{v},\mu+s \nu)}{ds}\right|_{s=0}\equiv dG_{\ep,\tilde u_\ep,\mu}(\tilde v, \nu)\\&= H\tilde v + \ep\Big( \tilde v'\cos x - \tilde v(\sin x -\ep\sin2x - \ep\tilde u')- \left. \ep\left( \frac{1}{2}\cos2x - \tilde u \right)\tilde v' \right)\\
&+\tilde v' - \nu(\sin x-\ep\sin2x - \ep\tilde u') + \ep\mu\tilde v'
\end{align*}
maps $(\tilde v, \nu) \in \mathcal{X}$ linearly to $L^{2,-}$.

Thus as far as $dG_{\ep,\tilde{u}_\ep, \mu_\ep}(\tilde{u},\mu)$ is invertible from $\mathcal{X}$ to $L^{2,-}$ for $\ep\in [0,x^*)$ we will be able to extend the solution $(u_\ep,\mu_\ep)$ from $[0,\ep^*)$ to $[0,x^*)$ by the Implicit Function Theorem.

 Note that
\[
dG_{0,0,0}(\tilde v, \nu) = H\tilde v + \tilde v' - \nu\sin x
\]
is an isometry from $\mathcal{X}$ to $L^{2,-}$ under the norms given by \eqref{normal2} and \eqref{normaX}. Therefore one can compute
\begin{align*}
dG_{\ep,\tilde{u}_\ep,\mu_\ep}=dG_{0,0,0}^{-1}\left(\mathbb{I}+dG_{0,0,0}^{-1}\left(dG_{\ep,\tilde{u}_\ep,\mu_\ep}-dG_{0,0,0}\right)\right).
\end{align*}

By the Neumann series and the fact that $dG_{0,0,0}$ is an isometry, $dG_{\ep,\tilde{u}_\ep,\mu_\ep}$ will be invertible, as far as, $||dG_{\ep,\tilde{u}_\ep,\mu_\ep}-dG_{0,0,0}||_{\mathcal{X}\to L^{2,-}}<1$. In order to show this last inequality we will bound $$||dG_{\ep,\tilde{u}_\ep,\mu_\ep}-dG_{0,0,0}||_{\mathcal{X}\to L^{2,-}}\equiv A_\ep $$ in terms of $||\tilde{u}_\ep||_{X}$ and $\mu_\ep$. After that we will bound $||\tilde{u}_\ep||_{X}$ and $\mu_\ep$.  To do it we will use the information we have about $\partial_\ep \tilde{u}_\ep$ and $\partial_\ep\mu_\ep$.

 Along the bifurcation curve,
\begin{equation}\label{implicit-diff2}
\begin{aligned}
dG_{\ep,\tilde u_\ep,\mu_\ep}(\partial_\ep\tilde u_\ep, \mu_\ep')
&= - \partial_\ep G(\ep, \tilde u_\ep, \mu_\ep)\\
&= \cos x(\sin2x + \tilde u_\ep') + \frac{\sin x}{2}(\cos2x - 2\tilde u_\ep)\\
&- \ep(\cos2x - 2\tilde u_\ep)(\sin2x + \tilde u_\ep') + \mu_\ep(\sin2x + \tilde u_\ep').
\end{aligned}
\end{equation}
Thus
$$(\partial_\ep \tilde{u}_\ep,\mu'_\ep)=dG_{\ep,\tilde{u}_\ep,\mu_\ep}^{-1}\left(-\partial_\ep G(\ep,\tilde{u}_\ep,\mu_\ep)\right)$$

Therefore
\begin{align}\label{thislast}
\sqrt{||\partial_\ep \tilde{u}_\ep||_{X}^2+|\mu'_\ep|^2}\leq \frac{1}{1-A_\ep}||\partial_\ep G(\ep,\tilde{u}_\ep,\mu_\ep)||_{L^{2,-}}.
\end{align}
In addition we have that, for $r_\ep=\sqrt{|| \tilde{u}_\ep||_{X}^2+|\mu_\ep|^2}$,
$$\partial_\ep r_\ep \leq \sqrt{||\partial_\ep \tilde{u}_\ep||_{X}^2+|\mu'_\ep|^2}\leq \frac{1}{1-A_\ep}||\partial_\ep G(\ep,\tilde{u}_\ep,\mu_\ep)||_{L^{2,-}}.$$

Thus, explicit estimates for $A_\ep$ and $||\partial_\ep G(\ep,\tilde{u}_\ep,\mu_\ep)||_{L^{2,-}}$ in terms of $r_\ep$ and $\ep$ give a differential inequality for $r_\ep$ which can be used to bound $A_\ep$.

We will need the following lemmas to bound $A_\ep$ and  the norm $||\partial_\ep G(\ep,\tilde{u}_\ep,\mu_\ep)||_{L^{2,-}}$, where $\partial_\ep G(\ep,\tilde{u}_\ep,\mu_\ep)$ is given by the right hand side of  \eqref{implicit-diff2}.

\begin{lem}\label{linear-bound}
If $f \in X$ then $\|f\sin x - f'\cos x\|_{L^2} \le \sqrt{3}\|f\|_X$.
\end{lem}
\begin{proof}
Let
\[
f = \sum_{n=2}^\infty f_n\cos nx.
\]
Then
\begin{align*}
f\sin x - f'\cos x &= -(f\cos x)'\\
&= -\frac{1}{2}\left( f_2\cos x + f_3\cos2x + \sum_{n=3}^\infty (f_{n-1} + f_{n+1})\cos nx \right)'\\
&= \frac{1}{2}\left( f_2\sin x + 2f_3\sin2x + \sum_{n=3}^\infty n(f_{n-1} + f_{n+1})\sin nx \right).
\end{align*}
Then
\begin{align*}
||f\sin(x)-f'\cos(x)||_{L^2}^2=\frac{1}{2}\sqrt{f_2^2+4f_3^2+\sum_{n=3}^\infty n^2(f_{n-1}+f_{n+1})^2}
\end{align*}
And we have that
\begin{align*}
\sum_{n=3}^\infty n^2(f_{n-1}+f_{n+1})^2=\sum_{n=3}^\infty n^2f_{n-1}^2+\sum_{n=3}^\infty f_{n+1}^2+2\sum_{n=2}^\infty n^2f_{n-1}f_{n+1}.
\end{align*}
In addition
\begin{align*}
\sum_{n=3}^\infty n^2 f_{n-1}^2=\sum_{n=2}^\infty (n+1)^2f_{n}^2=9f_2^2+16f_3^2+25f_4^2+36f_5^2+\sum_{n=6}^\infty (n+1)^2 f_n^2,
\end{align*}
\begin{align*}
\sum_{n=3}^\infty n^2 f_{n+1}^2 =\sum_{n=4}^\infty (n-1)^2 f_{n}^2= 9f_4^2+16f_5^2+\sum_{n=6}^\infty (n-1)^2 f_{n}^2,
\end{align*}
and
\begin{align*}
&2\sum_{n=3}^\infty n^2 f_{n-1}f_{n+1}=2\cdot 9f_2f_4+2\cdot 16 f_{3}f_5+2\sum_{n=5}^\infty n^2 f_{n-1}f_{n+1}\\
&\leq 9\sigma f_2^2+\frac{9}{\sigma}f_4^2 + 16\gamma f_3^2+\frac{16}{\gamma}f_5^2+\sum_{n=5}^\infty n^2f_{n-1}^2+\sum_{n=5}^\infty n^2 f_{n+1}^2\\
&=9\sigma f_2^2+\left(\frac{9}{\sigma}+25\right)f_4^2 + 16\gamma f_3^2+\left(\frac{16}{\gamma}+36\right)f_5^2+\sum_{n=7}^\infty n^2f_{n-1}^2+\sum_{n=5}^\infty n^2 f_{n+1}^2\\
&=9\sigma f_2^2+\left(\frac{9}{\sigma}+25\right)f_4^2 + 16\gamma f_3^2+\left(\frac{16}{\gamma}+36\right)f_5^2+\sum_{n=6}^\infty (n+1)^2f_{n}^2+\sum_{n=6}^\infty (n-1)^2 f_{n}^2\\
\end{align*}
Thus
\begin{align*}
&f_2^2+4 f_3^2+\sum_{n=3}^\infty n^2 (f_{n-1}+f_{n+1})\\
&\leq \left(10+9\sigma\right)f_2^2+(20+16\gamma)f_{3}^2+\left(59+\frac{9}{\sigma}\right)f_4^2+\left(88+\frac{16}{\gamma}\right)f_5^2\\&+ 2\sum_{n=6}^\infty \left(1+\frac{(n+1)^2}{(n-1)^2}\right)(n-1)^2f_{n}^2.
\end{align*}
Since, for $n\geq 6$, we have that $\frac{(n+1)^2}{(n-1)^2}\leq \frac{49}{25}$ we finally obtain that
\begin{align*}
&f_2^2+4 f_3^2+\sum_{n=3}^\infty n^2 (f_{n-1}+f_{n+1})\\
&\leq \left(10+9\sigma\right)f_2^2+(20+16\gamma)f_{3}^2+\left(59+\frac{9}{\sigma}\right)f_4^2+\left(88+\frac{16}{\gamma}\right)f_5^2\\&+ 2\left(\frac{49}{25}+1\right)\sum_{n=6}^\infty (n-1)^2f_{n}^2.
\end{align*}
We have to compare
\begin{align*}
10+9\sigma, && \frac{20+16\gamma}{4}, && \frac{59+\frac{9}{\sigma}}{9}, && \frac{88+\frac{16}{\gamma}}{16}&&\text{and}&& 2\left(1+\frac{49}{25}\right)
\end{align*}
The minimum in $\sigma$ of $\max\left(10+9\sigma, \frac{59+\frac{9}{\sigma}}{9}\right)$ is attached when $10+9\sigma=\frac{59}{9}+\frac{1}{\sigma}$ and it is 11.373. Then minimum in $\gamma$ of $\max \left(5+4\gamma, 5.5+\frac{1}{\gamma}\right)$ less than 7.5. Finally $2(1+49/25)=5.92$. Therefore
\begin{align*}
\|f\sin(x)-f'\cos(x)\|_{L^2}\leq \sqrt{3}\|f\|_X.
\end{align*}

\end{proof}

\begin{lem}\label{bilinear-bound}
If $f$, $g \in X$ then $\|(fg)'\|_{L^2} \le B\|f\|_X\|g\|_X$, where
\[
B = \sqrt{\frac{\pi^2}{3} + \frac{869}{144}} \approx 3.05.
\]
\end{lem}
\begin{proof}
Let
\begin{align*}
f &= \sum_{n=2}^\infty f_n\cos nx = \frac{1}{2} \sum_{|n|\ge2} f_{|n|}e^{inx}, &
g &= \sum_{n=2}^\infty g_n\cos nx = \frac{1}{2} \sum_{|n|\ge2} g_{|n|}e^{inx} \in X.
\end{align*}
Then
\begin{align*}
(fg)' &= \frac{i}{4}\sum_{|n|\ge1} n\sum_{|m|\ge2,|n-m|\ge2} f_{|m|}g_{|n-m|}e^{inx}\\
&= -\frac{1}{2}\sum_{n\ge1} n\sum_{|m|\ge2,|n-m|\ge2} f_{|m|}g_{|n-m|}\sin nx
\end{align*}
so by Cauchy--Schwarz,
\begin{align*}
\|(fg)'\|_{L^2}^2 &= \frac{1}{8}\sum_{|n|\ge1} n^2\left( \sum_{|m|\ge2,|n-m|\ge2} f_{|m|}g_{|n-m|} \right)^2\\
&\le \frac{1}{8} \sum_{|n|\ge1} \sum_{|m|\ge2,|n-m|\ge2} \frac{n^2}{(|m|-1)^2(|n-m|-1)^2}\\
&\times \sum_{|m|\ge2,|n-m|\ge2} (|m| - 1)^2f_m^2(|n - m| - 1)^2g_{n-m}^2\\
&\le \frac{C}{8} \sum_{|n|\ge1}\sum_{|m|\ge2,|n-m|\ge2} (|m| - 1)^2f_m^2(|n - m| - 1)^2g_{n-m}^2\\
&= \frac{C}{2}\|f\|_X\|g\|_X
\end{align*}
where
\begin{align*}
C &= \sup_{n=1}^\infty C_n,\\
C_n &= \sum_{|m|\ge2,|n-m|\ge2} \frac{n^2}{(|m|-1)^2(|n-m|-1)^2}\\
&= \underbrace{\sum_{k=1}^{n-3} \frac{n^2}{k^2(n - k - 2)^2}}_{D_n} + 2\underbrace{\sum_{k=1}^\infty \frac{n^2}{k^2(k + n)^2}}_{E_n}.
\end{align*}
$D_n \neq 0$ only when $n \ge 4$. We have that $D_4 = 16$ and when $n\ge5$,
\begin{align*}
D_n &\le \frac{2n^2}{(n - 2)^2}\sum_{k=1}^{n-3} \frac{k^2 + (n - k - 2)^2}{k^2(n - k - 2)^2} = \frac{4n^2}{(n - 2)^2}\sum_{k=1}^{n-3} \frac{1}{k^2}\\
&\le \frac{4n^2}{(n - 2)^2}\left( \frac{\pi^2}{6} - \frac{1}{n - 2} \right)
\le \frac{50(\pi^2 - 2)}{27}.
\end{align*}
For $E_n$, by partial fraction decomposition
\[
\frac{n^2}{k^2(k + n)^2} = \frac{1}{k^2} - \frac{2}{kn} + \frac{2}{(k + n)n} + \frac{1}{(k + n)^2}
\]
so
\begin{align*}
E_n &:= \sum_{k=1}^\infty \frac{n^2}{k^2(k + n)^2} = \frac{\pi^2}{6} + \sum_{k=n+1}^\infty \frac{1}{k^2} - \frac{2}{n}\sum_{k=1}^n \frac{1}{k}
< \frac{\pi^2}{3} - \sum_{k=1}^n \frac{1}{k^2}
\end{align*}
so when $n \ge 5$, $E_n < \frac{1}{3}(\pi^2 - 4)$ so
\[
C_n = D_n + 2E_n < \frac{68\pi^2 - 172}{27} < 18.5
\]
and when $1 \le n \le 3$,
\[
C_n = 2E_n < \frac{2\pi^2}{3} - 2 < 5.
\]
For $n = 4$ we have
\[
C_4 = D_4 + 2E_4 = 16 + 2\left( \frac{\pi^2}{3} - \frac{205}{144} \right) - \frac{13}{12} = \frac{2\pi^2}{3} + \frac{869}{72} > 18.6
\]
so
\[
C = \sup_{n=1}^\infty C_n = C_4 = \frac{2\pi^2}{3} + \frac{869}{72} = 2B^2
\]
and finally
\[
\|(fg)'\|_{L^2} \le \sqrt{\frac{C}{2}}\|f\|_{H^1}^2\|g\|_{H^1}^2
= B\|f\|_{H^1}^2\|g\|_{H^1}^2.
\]
\end{proof}

\begin{lem}\label{linear-bound2}
If $f \in X$ then $\|2f\sin2x - f'\cos2x\|_{L^2} \le 0.5\sqrt{17}\|f\|_X$.
\end{lem}
\begin{proof}
Let
\[
f = \sum_{n=2}^\infty f_n\cos nx.
\]
Then
\begin{align*}
2f\sin2x - f'\cos2x &= -(f\cos2x)'
= -\frac{1}{2}(f_3\cos x + f_4\cos2x + f_5\cos3x)'\\
&-\frac{1}{2}\left( \sum_{n=4}^\infty (f_{n-2} + f_{n+2})\cos nx \right)'\\
&= \frac{1}{2}(f_3\sin x + 2f_4\sin2x + 3f_5\sin3x)\\
&+ \frac{1}{2}\left( \sum_{n=4}^\infty n(f_{n-2} + f_{n+2})\sin nx \right),
\end{align*}
and
\begin{align*}
||2f\sin2x - f'\cos2x &= -(f\cos2x)'||_{L^2}=\frac{1}{2}\sqrt{f_3^2+4f_4^2+9f_5^2+\sum_{n=4}^\infty n^2(f_{n-2}+f_{n+2})^2}.
\end{align*}
We have that
\begin{align*}
&\sum_{n=4}^\infty n^2(f_{n-2}+f_{n+2})^2=\sum_{n=4}^\infty n^2f_{n-2}^2+\sum_{n=4}^\infty n^2 f_{n+2}^2+2\sum_{n=4}^\infty n^2f_{n-2}f_{n+2}\\
&=\sum_{n=2}^\infty (n+2)^2f_{n}^2+\sum_{n=6}^\infty (n-2)^2 f_{n}^2 +2\sum_{n=4}^\infty n^2f_{n-2}f_{n+2}
\end{align*}
We also can bound
\begin{align*}
&2\sum_{n=4}^\infty n^2 f_{n-2}f_{n+2}\leq \sum_{n=4}^\infty n^2\sigma_{n-2}f_{n-2}^2+\sum_{n=4}^\infty n^2\frac{1}{\sigma_{n-2}}f_{n+2}^2\\
&=\sum_{n=2}^\infty (n+2)^2\sigma_n f_n^2+\sum_{n=6}^\infty (n-2)^2\sigma_{n-4}f_{n}^2.
\end{align*}
Therefore
\begin{align*}
&f_3^2+4f_4^2+9f_5^2+\sum_{n=4}^\infty n^2(f_{n-2}+f_{n+2})^2\\ &\leq
f_3^2+4f_4^2+9f_5^2+\sum_{n=2}^\infty (n+2)^2(1+\sigma_n) f_{n}^2+\sum_{n=6}^\infty (n-2)^2\left(1+\frac{1}{\sigma_{n-4}}\right)f_{n}^2\\
&= 16(1+\sigma_2)f_2^2+ (1+25(1+\sigma_3))f_3^2+(4+36(1+\sigma_3))f_4^2+(9+49(1+\sigma_5))f_5^2\\
&+\sum_{n=6}^\infty \left((n+2)^2(1+\sigma_n)+(n-2)^2\left(1+\frac{1}{\sigma_{n-4}}\right)\right)f_{n}^2\\
&= 16(1+\sigma_2)f_2^2+ (1+25(1+\sigma_3))f_3^2+(4+36(1+\sigma_3))f_4^2+(9+49(1+\sigma_5))f_5^2\\
&+\left(64(1+\sigma_6)+16\left(1+\frac{1}{\sigma_2}\right)\right)f_6^2+\left(81(1+\sigma_7)+25\left(1+\frac{1}{\sigma_3}\right)\right)f_7^2\\
&+\left(100(1+\sigma_8)+36\left(1+\frac{1}{\sigma_4}\right)\right)f_8^2+\left(121(1+\sigma_9)+49\left(1+\frac{1}{\sigma_5}\right)\right)f_9^2\\
&+\sum_{n=10}^\infty \left((n+2)^2(1+\sigma_n)+(n-2)^2\left(1+\frac{1}{\sigma_{n-4}}\right)\right)f_{n}^2.
\end{align*}
We take $\sigma_n=1$ for $n\geq 6$ to get
\begin{align*}
&f_3^2+4f_4^2+9f_5^2+\sum_{n=4}^\infty n^2(f_{n-2}+f_{n+2})^2\\ &\leq
16(1+\sigma_2)f_2^2+ (1+25(1+\sigma_3))f_3^2+(4+36(1+\sigma_3))f_4^2+(9+49(1+\sigma_5))f_5^2\\
&+\left(128+16\left(1+\frac{1}{\sigma_2}\right)\right)f_6^2+\left(162+25\left(1+\frac{1}{\sigma_3}\right)\right)f_7^2\\
&+\left(200+36\left(1+\frac{1}{\sigma_4}\right)\right)f_8^2+\left(242+49\left(1+\frac{1}{\sigma_5}\right)\right)f_9^2\\
&+\sum_{n=10}^\infty \left((n+2)^2+(n-2)^2\right)f_{n}^2.
\end{align*}
One can compute that $(n+2)^2+(n-2)^2=2(n^2+4)$ and that $$2\frac{n^2+4}{(n-1)^2}\leq 2\frac{104}{81} .$$

Thus we have to compare
\begin{align*}
\max_{\sigma_2\in [0,1]} \left(16(1+\sigma_2), \frac{144}{25}+\frac{16}{25\sigma_2}\right)
\end{align*}
and
\begin{align*}
\max_{\sigma_3\in [0,1]} \left(\frac{26}{4}+\frac{25}{4}\sigma_3, \frac{187}{36}+\frac{25}{36\sigma_3} \right)
\end{align*}
and
\begin{align*}
\max_{\sigma_4\in [0,1]} \left(\frac{40}{9}+\frac{36}{9}\sigma_4, \frac{236}{49}+\frac{36}{49\sigma_4} \right)
\end{align*}
and
\begin{align*}
\max_{\sigma_5\in [0,1]} \left(\frac{58}{16}+\frac{49}{16}\sigma_5, \frac{291}{64}+\frac{49}{64\sigma_5} \right).
\end{align*}
and
$$2\frac{104}{81}<2.6.$$
Then the maximum of these all numbers is the first one which is $\leq 17$. Thus
\begin{align*}
\|2f\sin2x - f'\cos2x\|_{L^2} \le \frac{\sqrt{17}}{2}\|f\|_X
\end{align*}

\end{proof}

Now, with the lemmas \ref{linear-bound}, \ref{bilinear-bound} and \ref{linear-bound2} we are ready to bound the right hand side of \eqref{implicit-diff2}. Indeed,
\begin{align*}
\|\cos x\sin2x + \frac{1}{2}\sin x\cos2x - \ep\cos2x\sin2x\|_{L^2}
= &\frac{1}{4}\|3\sin3x + \sin x - 2\ep\sin4x\|_{L^2}\\  = & \frac{\sqrt{10 + 4\ep^2}}{4},\\
\|\tilde u_\ep'\cos x - \tilde u_\ep\sin x\|_{L^2} \le &\sqrt{3}\|\tilde u_\ep\|_X, \tag{Lemma \ref{linear-bound}}\\
\|2\tilde u_\ep\sin2x - \tilde u_\ep'\cos2x\|_{L^2} \le & 0.5\sqrt{17}\|\tilde u_\ep\|_X, \tag{Lemma \ref{linear-bound2}}\\
\|2\tilde u_\ep\tilde u_\ep'\| = \|((\tilde u_\ep)^2)'\|_{L^2} \le & B\|\tilde u_\ep\|_X^2, \tag{Lemma \ref{bilinear-bound}}\\
\|\mu_\ep\sin2x\|_{L^2} \le &|\mu_\ep|\\
\|\mu_\ep\tilde u_\ep'\|_{L^2} \le & 2|\mu_\ep|\|\tilde u_\ep\|_X,
\end{align*}
by Cauchy--Schwarz,
\begin{align*}
\|\text{right-hand side of (\ref{implicit-diff2})}\|_{L^2}
&\le \frac{\sqrt{10 + 4\ep^2}}{4} + (\sqrt{3} + 0.5\sqrt{17}\ep)\|\tilde u_\ep\|_X + B\ep\|\tilde u_\ep\|_X^2\\
&+ |\mu_\ep| + 2|\mu_\ep|\|\tilde u_\ep\|_X\\
&\le \frac{\sqrt{10 + 4\ep^2}}{4} + 2\sqrt{\|\tilde u_\ep\|_X^2 + |\mu_\ep|^2} + 0.5\sqrt{17}\ep\|\tilde u_\ep\|_X\\
&+ B\ep\|\tilde u_\ep\|_X^2 + \|\tilde u_\ep\|_X^2 + |\mu_\ep|^2.
\end{align*}
Turning to the other side, we have that
\begin{equation}\label{dG-dG2}
\begin{aligned}
(dG_{(\ep,\tilde u_\ep,\mu_\ep)} - dG_{(0,0,0)})(\tilde v, \nu)
&= \ep\Big(\tilde v'\cos x - \tilde v(\sin x - \ep\sin2x - \ep\tilde u_\ep')\\
&- \left. \ep\left( \frac{1}{2}\cos2x - \tilde u_\ep \right)\tilde v' \right)\\
&+ \ep\nu(\sin2x + \tilde u_\ep') + \ep\mu_\ep\tilde v'.
\end{aligned}
\end{equation}
Since
\begin{align*}
\|\tilde v'\cos x - \tilde v\sin x\|_{L^2} &\le \sqrt{3}\|\tilde v\|_X,
\tag{Lemma \ref{linear-bound}}\\
\|\tilde v\sin2x - \frac{1}{2}\tilde v'\cos2x\|_{L^2} &\le 0.25\sqrt{17}\|\tilde v\|_X, \tag{Lemma \ref{linear-bound2}}\\
\|\tilde v\tilde u_\ep' + \tilde u_\ep\tilde v'\|_{L^2} &= \|(\tilde u_\ep\tilde v)'\|_{L^2} \le B\|\tilde u_\ep\|_X\|\tilde v\|_X,\tag{Lemma \ref{bilinear-bound}}\\
\|\nu(\sin2x + \tilde u_\ep')\|_{L^2} &\le |\nu|(1 + 2\|\tilde u_\ep\|_X),\\
\|\mu_\ep\tilde v'\|_{L^2} &\le 2|\mu_\ep|\|\tilde v\|_X,
\end{align*}
\begin{align*}
\|\text{left-hand side of (\ref{dG-dG2})}\|_{L^2} &\le (\sqrt3\ep + 0.25\sqrt{17}\ep^2 + B\ep^2\|\tilde u_\ep\|_X + 2\ep|\mu_\ep|)\|\tilde v\|_X\\
&+ \ep(1 + 2\|\tilde u_\ep\|_X)|\nu|
\end{align*}
so by the Minkowski inequality,
\begin{align*}
&\|dG_{(\ep,\tilde u_\ep,\mu_\ep)} - dG_{(0,0,0)}\|_{X\times\R\to L^2}\\
\le &\sqrt{(\sqrt{3}\ep + 2\ep|\mu_\ep| + 0.25\sqrt{17}\ep^2 + B\ep^2\|\tilde u_\ep\|_X)^2 + \ep^2(1 + 2\|\tilde u_\ep\|_X)^2}\\
\le &\sqrt{(3\ep^2 + \ep^2)} + 2\ep\sqrt{|\mu_\ep|^2 + \|\tilde u_\ep\|_X^2} + 0.25\sqrt{17}\ep^2 + B\ep^2\|\tilde u_\ep\|_X\\
\leq & 2\ep + 2\ep\sqrt{|\mu_\ep|^2 + \|\tilde u_\ep\|_X^2} + 0.25\sqrt{17}\ep^2 + B\ep^2\|\tilde u_\ep\|_X =: A_\ep.
\end{align*}
Since $dG_{(0,0,0)}$ is an isometry, the Von Neumann series $(1 - T)^{-1} = \sum_{n=0}^\infty T^n$ shows that if $A_\ep < 1$, then $dG_{(\ep,\tilde u_\ep,\mu_\ep)}$ is invertible, and $\|dG_{(\ep,\tilde u_\ep,\mu_\ep)}^{-1}\| \le (1 - A_\ep)^{-1}$, so
\begin{align*}\label{r'-bound2}
\sqrt{\|\partial_\ep\tilde u_\ep\|_X^2 + |\mu_\ep'|^2}
&\le \frac{1}{1 - A_\ep}\left( \frac{\sqrt{10 + 4\ep^2}}{4} + 2\sqrt{\|\tilde u_\ep\|_X^2 + |\mu_\ep|^2} + \frac{\sqrt{17}\ep}{2}\|\tilde u_\ep\|_X \right.\\
&+ B\ep\|\tilde u_\ep\|_X^2 + \|\tilde u_\ep\|_X^2 + |\mu_\ep|^2 \Big).
\end{align*}
Let $r_\ep = \sqrt{\|\tilde u_\ep\|_X^2 + |\mu_\ep|^2}$. Then
\[
A_\ep \le 2\ep + 2\ep r_\ep + 0.25\sqrt{17}\ep^2 + B\ep^2r_\ep,
\]
$r_0 = 0$ and
\[
r_\ep' \le \frac{\frac{1}{4}\sqrt{10 + 4\ep^2} + (2 + \frac{1}{2}\sqrt{17}\ep)r_\ep + B\ep r_\ep^2 + r_\ep^2}{1 - 2\ep - 2\ep r_\ep - \frac{1}{4}\sqrt{17}\ep^2 - B\ep^2r_\ep}.
\]
By the comparison principle, $r_\ep$ is bounded from above by the solution to
\[\label{ode}
\frac{dy}{dx} = y' = \frac{\sqrt{10 + 4x^2} + (8 + 2\sqrt{17}x)y + 4Bxy^2 + 4y^2}{4 - 8x - 8xy - \sqrt{17}x^2 - 4Bx^2y}
\]
with $y(0) = 0$. Integrating
\begin{align*}
\sqrt{10 + 4x^2}dx + (8 +2\sqrt{17}x)ydx + 4Bxy^2dx + 4y^2dx\\
+ 8xdy + 8xydy + \sqrt{17}x^2dy + 4Bx^2ydy = 4dy
\end{align*}
gives
\[
x\sqrt{x^2 + 2.5} + 2.5\sinh^{-1}(\sqrt{0.4}x)
+ 8xy + \sqrt{17}x^2y + 2Bx^2y^2 + 4xy^2 = 4y + c.
\]
Since $y(0) = 0$, $c = 0$, so
\[
(2Bx^2 + 4x)y^2 + (8x + \sqrt{17}x^2 - 4)y + x\sqrt{x^2 + 2.5} + 2.5\sinh^{-1}(\sqrt{0.4}x) = 0.
\]
When $x > 0$, the quadratic coefficient and the constant is positive,
so this equation has a non-negative root iff
\[
8x + \sqrt{17}x^2 - 4 \le -2\sqrt{(2Bx^2 + 4x)(x\sqrt{x^2 + 2.5} + 2.5\sinh^{-1}(\sqrt{0.4}x))}
\]
whose solution is $x \le x^* \approx 0.23$ numerically.
Hence the solution can be extended to $\ep = x^* \approx 0.23$. In order to achieve this last conclusion we just notice that the solution to  \eqref{ode}, with $y(0)=0$ can be extended only if $A_\ep<1$, since $1-A_\ep$ arises in the denominator.

The above argument shows that for $\ep \in (-x^*, x^*)$, the bifurcation curve produces a traveling wave $u_\ep = \ep\cos x - \frac{\ep^2}{2}\cos2x + \ep^2\tilde u_\ep$ which travels at speed $v_\ep = -1 - \ep\mu_\ep$.
Since all the operators involved are analytic in all its arguments,
the bifurcation curve is analytic in $\ep$ on $(-x^*, x^*)$.
It may be the case, however, that the power series for $u_\ep$ and $v_\ep$ around $\ep = 0$ has a smaller radius of convergence than $x^*$
(for example, the function $f(x) = (x^2 + 1)^{-1}$ is analytic on the whole real line, but the radius of convergence of its power series around 0 is only 1.) We now show that the radius of convergence of the power series for $u_\ep$ and $v_\ep$ are indeed at least $x^*$.

We note that the above argument also works if $\ep$ is replaced with $\ep e^{ia}$ ($a \in \R$), so the bifurcation curve $(u_\ep, v_\ep)$ is also analytic in a neighborhood of $\{\ep e^{ia}: \ep \in (-x^*, x^*)\}$.
Hence the curve is analytic in the disk of radius $x^*$ centered at 0,
so the radius of convergence of its power series around 0 is at least $x^*$.
\end{proof}

\section{Linearization around traveling waves}\label{LTW}

In this section we will analyse the spectrum of the operator
\begin{align*}
L_\ep g =-v_\ep g_x+Hg+\left(u_\ep(x)g\right)_x
\end{align*}
corresponding to the linearization of equation \eqref{BH} around the traveling wave $(u_\ep, v_\ep)$ bifurcating from zero in the direction of the cosine studied in the previous section.

Actually, let
\[
f(x, t) = f_\ep(x, t) + g(x + v_\ep t, t).
\]
with $f_\ep(x,t)=u_\ep(x+v_\ep t)$.
Then
\[
f_t(x, t) = \partial_tf_\ep(x, t) + (v_\ep g_x + g_t)(x + v_\ep t, t)
\]
and
\begin{align*}
(Hf + ff_x)(x, t) &= (Hf_\ep + f_\ep\partial_xf_\ep)(x, t) + Hg(x + v_\ep t, t)\\
&+ \partial_x(f_\ep(x, t)g(x + v_\ep t, t))
+ g(x + v_\ep t, t)\partial_xg(x + v_\ep t, t).
\end{align*}
Putting these in \eqref{BH}, we get the equation for $g(x,t)$:
\[
\partial_t g(x,t) = -v_\ep g(x,t)_x + Hg(x,t) + (u_\ep(x) g(x,t))_x + g(x,t)g(x,t)_x.
\]
The linearization around $g = 0$ is
\begin{align*}
 \partial_t g= L_\ep g
\end{align*}
where
\begin{equation}\label{L-ep}
L_\ep g = -v_\ep g_x + Hg + (u_\ep g)_x
= \underbrace{Hg + g_x}_{Lg} + \sum_{n=1}^\infty \ep^n\underbrace{((u^{(n)} - v^{(n)})g)_x}_{L^{(n)}g}.
\end{equation}

\subsection{The eigenvalue 0}\label{lambda=0}
The action of $L$ on the Fourier modes is
\[
\mathcal F(Lg)(m) = i(m - \sgn m)\hat g(m)
\]
with eigenvalues 0 (double), $\pm i$, $\pm2i$, $\dots$ (on $L^2(\T)$ with zero mean). We first study the perturbation of the eigenspace corresponding to the double eigenvalue of 0.
By translational symmetry, for any $\dl \in \R$, $u_\ep(x + \dl)$ is also a solution to
\[
Hu - v_\ep u + uu' = 0.
\]
Differentiation with respect to $\dl$ then shows that
\[
L_\ep u_\ep' = Hu_\ep' - v_\ep u_\ep' + (u_\ep u_\ep')' = 0.
\]
Also, since $u_\ep$ lies on a bifucation curve, we can differentiate
\[
Hu_\ep - v_\ep u_\ep' + u_\ep u_\ep' = 0
\]
with respect to $\ep$ to get
\[
L_\ep\partial_\ep u_\ep = H\partial_\ep u_\ep - (\partial_\ep v_\ep)u_\ep' + u_\ep\partial_\ep u_\ep' + u_\ep'\partial_\ep u_\ep = (\partial_\ep v_\ep)u_\ep'
\]
so on the span $V_\ep$ of $u_\ep'$ and $\partial_\ep u_\ep$, $L_\ep$ acts nilpotently by the matrix
\[
\begin{pmatrix}
0 & \partial_\ep v_\ep\\
0 & 0
\end{pmatrix}.
\]

\subsection{Simplifying the linearized operator}\label{SimpLin}
We want to solve the eigenvalue problem
\[
L_\ep g = ((u_\ep - v_\ep)g)' + Hg = \lambda(\ep)g.
\]
Let $g = h'$. Then the antiderivative of the above is
\begin{equation}\label{eigen-h}
(u_\ep - v_\ep)h' + Hh = \lambda(\ep)h \pmod 1.
\end{equation}
Let $h = \tilde h \circ \phi_\ep$, where $\phi_\ep$ satisfies
\begin{equation}\label{phi-ep'}
\phi_\ep' = \frac{2\pi}{u_\ep - v_\ep}\left( \int_0^{2\pi} \frac{dy}{u_\ep(y) - v_\ep} \right)^{-1}.
\end{equation}
Then
\[
(u_\ep - v_\ep)\phi_\ep'(\tilde h' \circ \phi_\ep) + H(\tilde h \circ \phi_\ep)
= \lambda(\ep)\tilde h \circ \phi_\ep \pmod 1.
\]
When $\ep$ is small enough, $\phi_\ep$ is a diffeomorphism of $\R/2\pi\Z$, so
\[
2\pi\left( \int_0^{2\pi} \frac{dy}{u_\ep(y) - v_\ep} \right)^{-1}\tilde h'
+ H(\tilde h \circ \phi_\ep) \circ \phi_\ep^{-1} = \lambda(\ep)\tilde h
\pmod 1.
\]
By the change of variable $z = \phi_\ep(y)$,
\begin{align*}
H(\tilde h \circ \phi_\ep) \circ \phi_\ep^{-1}(x)
&= \frac{1}{2\pi}\int_0^{2\pi} \tilde h(\phi_\ep(y))\cot\frac{\phi_\ep^{-1}(x) - y}{2}dy\\
&= \frac{1}{2\pi}\int_0^{2\pi} \tilde h(z)\cot\left( \frac{\phi_\ep^{-1}(x) - \phi_\ep^{-1}(z)}{2} \right)(\phi_\ep^{-1})'(z)dz.
\end{align*}
The convolution kernel of the operator
\[
R_\ep\tilde h = H(\tilde h \circ \phi_\ep) \circ \phi_\ep^{-1} - H\tilde h
\]
is
\begin{equation}\label{R-kernel}
K_\ep(x, z)
= \cot\left( \frac{\phi_\ep^{-1}(x) - \phi_\ep^{-1}(z)}{2} \right)(\phi_\ep^{-1})'(z) - \cot\frac{x - z}{2}
\end{equation}
and the $\ep$-derivative of the kernel is
\begin{align*}
\partial_\ep K_\ep(x, z)
&= -\csc^2\left( \frac{\phi_\ep^{-1}(x) - \phi_\ep^{-1}(z)}{2} \right)
\frac{\partial_\ep\phi_\ep^{-1}(x) - \partial_\ep\phi_\ep^{-1}(z)}{2}
(\phi_\ep^{-1})'(z)\\
& +\cot\left( \frac{\phi_\ep^{-1}(x) - \phi_\ep^{-1}(z)}{2} \right)
\partial_\ep(\phi_\ep^{-1})'(z).
\end{align*}
Near $x = 0$, $\csc x - 1/x^2$ and $\cot x - 1/x$ are smooth, and $(\phi_\ep^{-1})'$ is smooth everywhere, so when $x - z$ is small enough, up to a smooth function in $(x, z)$,
\[
\begin{aligned}
\frac{\partial_\ep K_\ep(x, z)}{2}
&= -\frac{(\partial_\ep\phi_\ep^{-1}(x) - \partial_\ep\phi_\ep^{-1}(z))(\phi_\ep^{-1})'(z)}{(\phi_\ep^{-1}(x) - \phi_\ep^{-1}(z))^2}
+ \frac{\partial_\ep(\phi_\ep^{-1})'(z)}{\phi_\ep^{-1}(x) - \phi_\ep^{-1}(z)}\\
&= \frac{\partial_\ep(\phi_\ep^{-1})'(z)(\phi_\ep^{-1}(x) - \phi_\ep^{-1}(z)) - (\partial_\ep\phi_\ep^{-1}(x) - \partial_\ep\phi_\ep^{-1}(z))(\phi_\ep^{-1})'(z)}{(\phi_\ep^{-1}(x) - \phi_\ep^{-1}(z))^2}\\
&= \frac{\partial_\ep(\phi_\ep^{-1})'(z)(x-z)^2\int_0^1 (1-t)(\phi_\ep^{-1})''((1 - t)z + tx)dt}{(\phi_\ep^{-1}(x) - \phi_\ep^{-1}(z))^2}\\
&- \frac{(\phi_\ep^{-1})'(z)(x-z)^2\int_0^1 (1-t)\partial_\ep(\phi_\ep^{-1})''((1 - t)z + tx)dt}{(\phi_\ep^{-1}(x) - \phi_\ep^{-1}(z))^2}
\end{aligned}
\]
which is itself a smooth function of $(x, z)$ when $x - z$ is small enough
(because $\phi_\ep^{-1}$ is smooth). Then
\[
\|\partial_\ep R_\ep\tilde h^{(m)}\|_{\dot H^k} \lesssim_{k,m} \|\tilde h\|_{L^2/(1)},\quad k, m = 0, 1, \dots,
\]
where the constant does not depend on $\ep$, for all $\tilde h \in H^m/(1)$, or, equivalently,
\begin{equation}\label{R-bound}
\|\partial_\ep R_\ep\tilde h\|_{\dot H^k} \lesssim_{k,m} \|\tilde h\|_{\dot H^{-m}},\quad k, m = 0, 1, \dots,
\end{equation}
where the dot over $H$ means that the norm does not measure frequency zero.
\begin{df}
We say an operator is of class $\mathcal S$ if it satisfies (\ref{R-bound}).
We say a family of operators is of class $\mathcal S$ uniformly if for each $k$ and $m$ there is an implicit constant that makes (\ref{R-bound}) true for all operators in the family.
\end{df}

Thus $\partial_\ep R_\ep$ is of class $\mathcal S$ uniformly in $\ep$. Since $R_0 = 0$, $R_\ep/\ep$ is also of class $\mathcal S$ uniformly in $\ep$.

Now the eigenvalue problem for $\tilde h$ is of the form
\[
(c_\ep\partial_x + H + R_\ep)\tilde h = \lambda(\ep)\tilde h \pmod 1
\]
or, equivalently,
\begin{equation}\label{eigen-tld-h}
(\partial_x + c_\ep^{-1}H + c_\ep^{-1}R_\ep)\tilde h = c_\ep^{-1}\lambda(\ep)\tilde h \pmod 1
\end{equation}
where
\begin{equation}\label{c-ep-def}
c_\ep = 2\pi\left( \int_0^{2\pi} \frac{dy}{u_\ep(y) - v_\ep} \right)^{-1}
\end{equation}
and $R_\ep/\ep$ is of class $\mathcal S$ uniformly in $\ep$. Note that since $u_\ep$ and $v_\ep$ are analytic functions of $\ep$ on a neighborhood of 0,
with $u_0 = 0$ and $v_0 = -1$, so are $\phi_\ep$, $R_\ep$ and $c_\ep$ with $\phi_0 = I$, $R_0 = 0$ and $c_0 = 1$.

\subsection{Spectral analysis of the linearization}\label{SpecLin}
The eigenvalue problem (\ref{eigen-tld-h}) is a perturbation of the eigenvalue problem
\[
\tilde h' + H\tilde h = \lambda\tilde h \pmod 1
\]
with explicit eigenvalues
\[
0\text{ (double)},\ ni,\quad n = \pm1, \pm2, \dots
\]
and eigenfunctions
\[
e^{\pm ix},\ e^{i(n+\sgn n)x},\quad n = \pm1, \pm2, \dots.
\]
They form an othorgonal basis of $H^k/(1)$ for any nonnegative integer $k$.
\begin{df}
Let $T: \dot H^k(\T)\to\dot H^k(\T)$, for $k\in \N$, a linear operator. We will denote
\[
\|T\| := \|T\|_{\dot H^k(\T)\to\dot H^k(\T)}.
\]
\end{df}

The resolvent $(\partial_x + H - z)^{-1}$ is also a Fourier multiplier whose action on Fourier modes is
\begin{equation}\label{L0-res-mult}
(\partial_x + H -z)^{-1}e^{\pm i(n+1)x}
= (\pm ni - z)^{-1}e^{\pm i(n+1)x},\quad n = 0, 1, \dots.
\end{equation}
The circle
\[
\Gamma_n = \{z: |z - ni| = 1/2\}, \quad n = \pm1, \pm2, \dots
\]
encloses a single eigenvalue $\pm ni$, and the circle
\[
\Gamma_0 = \{z: |z| = 1/2\}
\]
encloses the double eigenvalue 0. On $\Gamma_n$ and $\Gamma_0$ we have that
\begin{equation}\label{Gamma-lambda}
|z - mi| \ge 1/2,\quad m \in \Z
\end{equation}
so by (\ref{L0-res-mult}),
\begin{equation}\label{L0-res-bound}
\|(\partial_x + H - z)^{-1}\| \le 2,\quad z \in \Gamma_n,\ n \in \Z.
\end{equation}
Moreover the projection
\[
P_n = -\frac{1}{2\pi i}\int_{\Gamma_n} (\partial_x + H - z)^{-1}dz,\quad
n = \pm1, \pm2, \dots
\]
is the projection on the span of $e^{i(n+\sgn n)x}$ and the projection
\[
P_0 = -\frac{1}{2\pi i}\int_{\Gamma_0} (\partial_x + H - z)^{-1}dz
\]
is the projection on the span of $e^{ix}$ and $e^{-ix}$.

Now when $\ep$ is small enough and $z \in \G_n$, we have that
\[
\partial_x + c_\ep^{-1}H + c_\ep^{-1}R_\ep - z
= (\partial_x + H - z)(1 + (\partial_x + H - z)^{-1}R_\ep')
\]
where
\begin{equation}\label{R-ep'}
R_\ep' = (\partial_x + c_\ep^{-1}H + c_\ep^{-1}R_\ep) - (\partial_x + H)
= (c_\ep^{-1}-1)H + c_\ep^{-1}R_\ep
\end{equation}
is analytic in $\ep$ near 0, with $R_0' = 0$, thanks to the analyticity of $c_\ep$.
Taking the inverse gives that
\[
(\partial_x + c_\ep^{-1}H + c_\ep^{-1}R_\ep - z)^{-1}
= (1 + (\partial_x + H - z)^{-1}R_\ep')^{-1}(\partial_x + H - z)^{-1}
\]
and the Neumann series
\begin{equation}\label{Von-Neumann}
(1 + (\partial_x + H - z)^{-1}R_\ep')^{-1}
= \sum_{n=0}^\infty ((\partial_x + H - z)^{-1}R_\ep')^n
\end{equation}
converges because
\[
\|(\partial_x + H - z)^{-1}R_\ep'\| \le 2\|R_\ep'\| \lesssim_k \ep < 1
\]
when $\ep$ is small enough (depending on $k$). Moveover,
\[
\|(1 + (\partial_x + H - z)^{-1}R_\ep')^{-1} - I\| \lesssim_k \ep
\]
and so
\[
\|(\partial_x + c_\ep^{-1}H + c_\ep^{-1}R_\ep - z)^{-1} - (\partial_x + H - z)^{-1}\| \lesssim \ep
\]
uniformly for $z \in \Gamma_n$. Hence the projections
\begin{equation}\label{Qn-ep}
Q_n(\ep) = -\frac{1}{2\pi i}\int_{\Gamma_n} (\partial_x + c_\ep^{-1}H + c_\ep^{-1}R_\ep - z)^{-1}dz,\quad n \in \Z
\end{equation}
exist and satisfy
\begin{equation}\label{Qn-ep-Qn}
\|Q_n(\ep) - P_n\| \lesssim_k \ep,\quad n \in \Z
\end{equation}
uniformly in $n$. Then by Chapter I, Section 4.6 of \cite{kato},
when $\ep$ is small enough, $Q_n(\ep)$ is conjugate to $P_n$.
Thus $\dim \ran Q_n(\ep) = 1$ for $n \neq 0$ and $\dim \ran Q_0(\ep) = 2$. So
 $\partial_x + c_\ep^{-1}H + c_\ep^{-1}R_\ep$ has a single eigenvalue enclosed by $\Gamma_n$ for $n\neq 0$. In section \ref{lambda=0} we showed that the action on the range of $Q_0(\ep)$ is given by a nonzero nilpotent 2 by 2 matrix.  If $z$ is outside all these circles, then (\ref{L0-res-bound}) still holds and the Neumann series (\ref{Von-Neumann}) still converges to show that $\partial_x + c_\ep^{-1}H + c_\ep^{-1}R_\ep - z$ is invertible, so it has no other eigenvalues.

\subsection{Analyticity of eigenvalues and eigenvectors}
By (\ref{L0-res-mult}) and (\ref{Gamma-lambda}), $(\partial_x + H - z)^{-1}$ is analytic in $(z, \ep)$ for $z$ in a neighborhood $U$ of $\cup_{n\in\Z} \Gamma_n$, and $\ep$ near 0.
By (\ref{R-ep'}), $R_\ep'$ is analytic in $\ep$ near 0, so the series (\ref{Von-Neumann}) shows that $(\partial_x + c_\ep^{-1}H + c_\ep^{-1}R_\ep - z)^{-1}$ is analytic in $(z, \ep)$ for $z \in U$ and $\ep$ near 0, and the integral (\ref{Qn-ep}) shows that all the projections $Q_n(\ep)$ ($n \in \Z$) are analytic in a neighborhood of 0 independent of $n$.

Let $\psi_n(\ep)$ be the corresponding eigenvectors to $Q_n(\ep)$ for $n\neq 0$. Thanks to (\ref{Qn-ep-Qn}), a good choice is $\psi_n(\ep) = Q_n(\ep)e^{i(n+\sgn n)x}$, which is nonzero and analytic in a neighborhood of 0 independent of $n$. Then by (\ref{eigen-tld-h}),
\[
Q_n(\ep)(\partial_x + c_\ep^{-1}(H + R_\ep))e^{i(n+\sgn n)x}
= (\partial_x + c_\ep^{-1}(H + R_\ep))\psi_n(\ep)
= c_\ep^{-1}\lambda_n(\ep)\psi_n(\ep).
\]
On the other hand, the left-hand side equals
\[
(n + \sgn n)iQ_n(\ep)e^{i(n+\sgn n)x} + c_\ep^{-1}Q_n(\ep)(H + R_\ep)e^{\pm i(n+\sgn n)x}
\]
which is another vector analytic in $\ep$ near 0. Then by the next lemma,
all the eigenvalues $c_\ep^{-1}\lambda_n(\ep)$, and hence $\lambda_n(\ep)$,
are analytic in a neighborhood of 0 independent of $n$.

\begin{lem}\label{vec-div-analytic}
Let $u(\ep)$ and $v(\ep)$ be two vectors analytic in $\ep \in U$ satisfying
\[
u(\ep) \neq 0\text{ and }v(\ep) = \lambda(\ep)u(\ep),\quad \ep \in U.
\]
Then $\lambda(\ep)$ is analytic in $\ep \in U$.
\end{lem}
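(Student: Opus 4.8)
The plan is to exploit the fact that analyticity is a local property and to reduce the vector identity $v(\ep)=\lambda(\ep)u(\ep)$ to a scalar identity by testing against a fixed bounded linear functional. Fix $\ep_0\in U$. Since $u(\ep_0)\neq 0$, there is a bounded linear functional $\ell$ on the ambient Hilbert space $\dot H^k(\T)$ with $\ell(u(\ep_0))\neq 0$; in our setting one may simply take $\ell(w)=\langle w, u(\ep_0)\rangle$. Because $u$ is an analytic $\dot H^k$-valued function and $\ell$ is a bounded linear functional, the scalar function $\ep\mapsto\ell(u(\ep))$ is analytic, hence in particular continuous, so $\ell(u(\ep))\neq 0$ for $\ep$ in some neighborhood $U_0\subseteq U$ of $\ep_0$.

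Next I would apply $\ell$ to the identity $v(\ep)=\lambda(\ep)u(\ep)$, obtaining $\ell(v(\ep))=\lambda(\ep)\,\ell(u(\ep))$ and therefore
\[
\lambda(\ep)=\frac{\ell(v(\ep))}{\ell(u(\ep))},\qquad \ep\in U_0.
\]
The numerator $\ep\mapsto\ell(v(\ep))$ is analytic on $U$ for the same reason as the denominator, and the denominator is analytic and nonvanishing on $U_0$; a ratio of analytic scalar functions with nonvanishing denominator is analytic, so $\lambda$ is analytic on $U_0$. Since $\lambda$ is a well-defined function on all of $U$ (it is uniquely determined by $v=\lambda u$ together with $u\neq 0$, and hence does not depend on the auxiliary choice of $\ell$), and since $\ep_0\in U$ was arbitrary, $\lambda$ is analytic on $U$.

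The steps that need care are standard functional-analytic facts rather than genuine obstacles: the existence of a bounded linear functional not vanishing at a prescribed nonzero vector (immediate from the Riesz representation in a Hilbert space, or Hahn--Banach in general), and the fact that post-composing an analytic Banach-space-valued map with a bounded linear functional yields an analytic scalar function (this holds for both the strong and the weak notion of analyticity, which coincide for Banach-space-valued functions). With these in hand, the argument above is complete, and it is exactly what is needed to conclude that each $c_\ep^{-1}\lambda_n(\ep)$, and hence each $\lambda_n(\ep)$, is analytic near $0$.
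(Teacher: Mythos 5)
Your proof is correct and follows essentially the same approach as the paper's: reduce the vector identity to a scalar one by applying a bounded linear functional that does not vanish at $u(\ep_0)$, then recover $\lambda$ locally as a quotient of analytic scalar functions. The only cosmetic difference is that the paper normalizes $\ep_0=0$ ``without loss of generality,'' while you argue directly at an arbitrary base point and then invoke the independence of $\lambda$ from the choice of $\ell$ to glue; both are standard and equivalent.
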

\begin{proof}
Without loss of generality assume that $0 \in U$.
Since the result is local in $\ep$, it suffices to show that $\lambda(\ep)$ is analytic in a smaller neighborhood of 0.

Since $u(0) \neq 0$, we can find a linear functional $f$ such that $f(u(0)) \neq 0$. Then $f(u(\ep)) \neq 0$ in a neighborhood of 0, and so
\[
\lambda(\ep) = \frac{f(v(\ep))}{f(u(\ep))}
\]
is analytic in a neighborhood of 0.
\end{proof}

Regarding the double eigenvalue 0, in section \ref{lambda=0} we showed that $u_\ep'$ and $\partial_\ep u_\ep$ are two generalized eigenvectors of the operator $L_\ep$. Using the relation given in section \ref{SimpLin}, they correspond to two generalized eigenvectors $\psi_0^-(\ep)$ and $\psi_0^+(\ep)$ of the operator $\partial_x + c_\ep^{-1}H + c_\ep^{-1}R_\ep$, via the relation $(\psi_0^-(\ep)\circ\phi_\ep)' = u_\ep'$ and $(\psi_0^+(\ep)\circ\phi_\ep)' = \partial_\ep u_\ep$. Then clearly $\psi_0^\pm(\ep)$ are both analytic in $\ep$.

From the analyticity of the eigenvalues $c_\ep^{-1}\lambda_n(\ep)$,
it is easy to derive bounds on their Taylor coefficients.
\begin{prop}\label{lambda-Taylor-bound}
For $k \ge 1$ and $n \neq 0$, the coefficient of $\ep^k$ in $c_\ep^{-1}\lambda_n(\ep)$ is bounded in absolute value by $C^k$ for a constant $C > 0$ independent of $n$,
\end{prop}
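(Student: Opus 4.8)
The plan is to extract quantitative bounds from the analyticity already established. We know from the previous subsection that $c_\ep^{-1}\lambda_n(\ep)$ is the unique eigenvalue of $\partial_x + c_\ep^{-1}H + c_\ep^{-1}R_\ep$ enclosed by $\Gamma_n$, and that it is analytic in $\ep$ in a \emph{fixed} neighborhood $U$ of $0$ that does not depend on $n$. By Cauchy's estimates for Taylor coefficients, if we can bound $|c_\ep^{-1}\lambda_n(\ep)|$ uniformly in $n$ on a fixed circle $|\ep| = \rho_0 \subset U$, say by $M$, then the coefficient of $\ep^k$ is bounded by $M/\rho_0^k$, and taking $C = \max(1/\rho_0,\ M/\rho_0)$ (or simply any $C \ge 1/\rho_0$ large enough to absorb $M$ into the first power as well) gives the stated $C^k$ bound. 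So the whole proposition reduces to one uniform-in-$n$ bound on the eigenvalue itself.

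For that bound I would argue as follows. Fix a small $\rho_0 > 0$ so that for $|\ep| \le \rho_0$ the Neumann series analysis of the preceding subsection applies with $\|(\partial_x + H - z)^{-1}R_\ep'\| \le 1/2$ for all $z \in \Gamma_n$, $n \in \Z$; this is possible because $\|R_\ep'\| \lesssim_k |\ep|$ with constant independent of $n$ and because $\|(\partial_x + H - z)^{-1}\| \le 2$ on every $\Gamma_n$ by (\ref{L0-res-bound}). On $\Gamma_n$ the resolvent $(\partial_x + c_\ep^{-1}H + c_\ep^{-1}R_\ep - z)^{-1}$ then has norm $\le 2\cdot 2 = 4$. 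The eigenvalue $c_\ep^{-1}\lambda_n(\ep)$ can be recovered as
\[
c_\ep^{-1}\lambda_n(\ep) = \frac{\Tr\left( -\frac{1}{2\pi i}\int_{\Gamma_n} z\,(\partial_x + c_\ep^{-1}H + c_\ep^{-1}R_\ep - z)^{-1}dz \right)}{\Tr Q_n(\ep)}
\]
(the numerator is $(\text{eigenvalue})\cdot\dim\ran Q_n(\ep)$ on a one-dimensional range, and $\Tr Q_n(\ep) = 1$ since $Q_n(\ep)$ is conjugate to the rank-one $P_n$). On $\Gamma_n$ we have $|z| \le |n| + 1/2$, the circle has length $\pi$, and the integrand has operator norm $\le 4(|n|+1/2)$, but this only gives $|c_\ep^{-1}\lambda_n(\ep)| \lesssim |n|$. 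To get the uniform bound, instead I would write $(\partial_x + c_\ep^{-1}H + c_\ep^{-1}R_\ep - z)^{-1} = (\partial_x + H - z)^{-1} + E_n(z)$ where $\|E_n(z)\| \lesssim_k |\ep|$ uniformly on $\Gamma_n$, and use that $-\frac{1}{2\pi i}\int_{\Gamma_n} z (\partial_x + H - z)^{-1}dz$ has trace exactly $ni$ (the unperturbed eigenvalue times its multiplicity $1$). The correction $-\frac{1}{2\pi i}\int_{\Gamma_n} z\, E_n(z)\,dz$ is supported on the two-dimensional range of $Q_n(\ep) - P_n$-ish spaces and, after composing with the finite-rank spectral projection, has trace bounded by (length of $\Gamma_n$)$\times(|n|+1/2)\times\|E_n(z)\|\times(\text{rank factor})$. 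The key point is that $E_n(z)$, restricted to act between the relevant rank-one spectral subspaces near $e^{i(n+\sgn n)x}$, is really small: since both $H$ and $R_\ep$ commute with $\partial_x$ up to the class-$\mathcal S$ remainder, and $R_\ep$ is infinitely smoothing, $R_\ep e^{i(n+\sgn n)x}$ is $O_N(|n|^{-N}|\ep|)$ for every $N$; and $(c_\ep^{-1}-1)H e^{i(n+\sgn n)x}$ is exactly a scalar multiple $(c_\ep^{-1}-1)(-i\sgn n)$ of $e^{i(n+\sgn n)x}$. Thus the shift in the eigenvalue caused by the perturbation is $(c_\ep^{-1}-1)(n+\sgn n)i + O_N(|n|^{-N}|\ep|)$ relative terms, i.e. $c_\ep^{-1}\lambda_n(\ep) = c_\ep^{-1}(ni) + \text{(bounded uniformly in }n)$, and since $|c_\ep^{-1}|$ is bounded on $|\ep|\le\rho_0$, dividing out the already-known factor $c_\ep^{-1}$ — whose Taylor coefficients are themselves $O(C^k)$ by analyticity — shows the coefficient of $\ep^k$ in $c_\ep^{-1}\lambda_n(\ep)$ (equivalently, after multiplying by $c_\ep$, in $\lambda_n(\ep) = c_\ep\cdot(c_\ep^{-1}\lambda_n(\ep))$) grows at most like $C^k$ uniformly in $n$.

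The main obstacle is precisely this uniform-in-$n$ control: a naive bound on the spectral-projection contour integral loses a factor of $|n|$ because $z$ ranges up to size $|n|$ on $\Gamma_n$. The resolution is to subtract the exact unperturbed eigenvalue $ni$ first and estimate only the \emph{difference} $c_\ep^{-1}\lambda_n(\ep) - c_\ep^{-1}(ni)$, which is genuinely $O(1)$ in $n$ because the perturbation $R_\ep'$ acts on the near-$e^{i(n+\sgn n)x}$ eigenspace either diagonally (the $(c_\ep^{-1}-1)H$ piece, contributing the explicit bounded multiple of $ni$ that combines with $c_\ep^{-1}$ into the analytic prefactor) or with rapid decay in $n$ (the smoothing piece $c_\ep^{-1}R_\ep$). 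Once the uniform bound $\sup_{|\ep|\le\rho_0}\sup_n |c_\ep^{-1}\lambda_n(\ep)| =: M < \infty$ is in hand, Cauchy's inequality on the disk $|\ep| < \rho_0$ finishes: the coefficient of $\ep^k$ is at most $M\rho_0^{-k} \le C^k$ for $C := \max(M, 1)/\rho_0$ (enlarging $C$ once more if needed so that $C \ge 1$ handles $k=0$ as well, though the statement only claims $k \ge 1$). Everything else — analyticity, the fixed neighborhood, the rank-one conjugacy — is already supplied by the preceding subsections and by \cite{kato}.
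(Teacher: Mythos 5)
Your overall strategy is the same as the paper's: apply Cauchy's estimate for Taylor coefficients, after observing that the eigenvalue differs from the fixed constant $ni$ by an amount bounded uniformly in $n$ and in $\ep$ on a fixed disk. But your execution has a real error at the crucial step, and you also take a much longer route than is needed.

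The error: several times you subtract $c_\ep^{-1}(ni)$ instead of $ni$. These are not interchangeable. You write that $c_\ep^{-1}\lambda_n(\ep) = c_\ep^{-1}(ni) + (\text{bounded uniformly in }n)$, and later that the difference $c_\ep^{-1}\lambda_n(\ep) - c_\ep^{-1}(ni)$ is ``genuinely $O(1)$ in $n$''. This is false: $c_\ep^{-1}\lambda_n(\ep) - c_\ep^{-1}(ni) = c_\ep^{-1}(\lambda_n(\ep) - ni)$, and by (\ref{uv-Taylor-exp}) one has $\lambda_n(\ep) - ni = -\frac{\ep^2 i(n+\sgn n)}{4} + \cdots$, which grows linearly in $n$. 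Equivalently, $c_\ep^{-1}(ni)$ is $\ep$-dependent and its $\ep^k$-coefficient is $|n|$ times the $k$-th Taylor coefficient of $c_\ep^{-1}$, so subtracting it buys you nothing. Your final sentence, ``once the uniform bound $\sup_{|\ep|\le\rho_0}\sup_n|c_\ep^{-1}\lambda_n(\ep)|=:M<\infty$ is in hand \dots'', is also false as stated: that supremum over $n$ is infinite, since the eigenvalues approach $ni$ as $\ep\to 0$. What you actually need, and what you do say in one sentence, is to subtract the $\ep$-\emph{independent} constant $ni$. For $k\ge1$ the coefficient of $\ep^k$ in $c_\ep^{-1}\lambda_n(\ep)$ then equals that of $c_\ep^{-1}\lambda_n(\ep)-ni$, and the correct uniform bound you should aim for is $\sup_{|\ep|\le\rho_0}\sup_n|c_\ep^{-1}\lambda_n(\ep)-ni|<\infty$.

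Once you fix the target of the subtraction, you also don't need the contour-integral and trace machinery you set up (the formula $c_\ep^{-1}\lambda_n(\ep)=\Tr(-\frac{1}{2\pi i}\int_{\Gamma_n}z(T-z)^{-1}dz)/\Tr Q_n(\ep)$, the decomposition $E_n(z)$, the smallness of $R_\ep e^{i(n+\sgn n)x}$, etc.). The spectral analysis of section \ref{SpecLin} already established that for $|\ep|$ in a neighborhood of $0$ independent of $n$, the single eigenvalue of $\partial_x+c_\ep^{-1}H+c_\ep^{-1}R_\ep$ near mode $n$ is \emph{enclosed} by $\Gamma_n=\{z:|z-ni|=1/2\}$. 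That immediately gives $|c_\ep^{-1}\lambda_n(\ep)-ni|<1/2$ uniformly in $\ep$ in the fixed disk and uniformly in $n$. This one line replaces your entire second and third paragraphs, and Cauchy's estimate then gives the claimed bound with $C=1/\rho_0$.

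Finally, one small internal contradiction to flag: you correctly compute that $(c_\ep^{-1}-1)He^{i(n+\sgn n)x}$ has eigenvalue $(c_\ep^{-1}-1)(-i\sgn n)$, which is bounded, but two lines later you write the eigenvalue shift as $(c_\ep^{-1}-1)(n+\sgn n)i$, which grows like $|n|$. The first expression is the correct one, and it is consistent with the identity $c_\ep^{-1}\lambda_n(\ep) = i(n+\sgn n) - ic_\ep^{-1}\sgn n + O(\ep^{2|n|+4})$ coming from Proposition \ref{lambda-Taylor}; the second would break the uniformity you are trying to establish.
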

\begin{proof}
At the end of section \ref{SpecLin} we showed that when $\ep$ is in a neighborhood of 0 independent of $n$, the eigenvalues $c_\ep^{-1}\lambda_n(\ep)$ are enclosed in the circle $\Gamma_n$. Then
\[
|c_\ep^{-1}\lambda_n(\ep) - ni| < 1/2,\quad n = \pm1, \pm2, \dots.
\]
The result follows from Cauchy's integral formula for Taylor coefficients.

\end{proof}

\begin{cor}\label{lambda-Taylor-bound-cor}
For $k \ge 0$ and $n \neq 0$, the coefficient of $\ep^k$ in $\lambda_n(\ep)$ is bounded in absolute value by $|n|C^k$ for a constant $C > 0$ independent of $n$,
\end{cor}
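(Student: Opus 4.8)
The plan is to obtain the bound for the Taylor coefficients of $\lambda_n(\ep)$ from the one already established in Proposition \ref{lambda-Taylor-bound} for $c_\ep^{-1}\lambda_n(\ep)$, simply by multiplying back by $c_\ep$, which is a \emph{single} analytic function of $\ep$ near $0$, not depending on $n$. First I would restate what Proposition \ref{lambda-Taylor-bound} actually provides: the functions $\ep \mapsto c_\ep^{-1}\lambda_n(\ep)$ are all analytic on one fixed disk $|\ep| < \rho$ (the neighborhood of $0$ independent of $n$ produced at the end of Section \ref{SpecLin}), and on that disk $|c_\ep^{-1}\lambda_n(\ep) - ni| < 1/2$, hence $|c_\ep^{-1}\lambda_n(\ep)| \le |n| + 1/2$. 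I would also record that the value at $\ep = 0$ of $c_\ep^{-1}\lambda_n(\ep)$ is exactly $ni$: at $\ep = 0$ the operator is $\partial_x + H$ and $Q_n(0) = P_n$ projects onto $e^{i(n+\sgn n)x}$, whose $(\partial_x+H)$-eigenvalue is $i\big((n+\sgn n)-\sgn(n+\sgn n)\big) = ni$. Since $c_0 = 1$, the $k=0$ coefficient of $\lambda_n(\ep)$ is $c_0\cdot ni = ni$, of modulus $|n| = |n|C^0$, so only $k \ge 1$ requires an estimate.

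Next I would use that $c_\ep$, being analytic near $0$ with $c_0 = 1$, is bounded, say $|c_\ep| \le M$, on the closed disk $|\ep| \le \rho/2$, with $M$ and $\rho$ independent of $n$. Hence $\lambda_n(\ep) = c_\ep\cdot\big(c_\ep^{-1}\lambda_n(\ep)\big)$ is analytic on $|\ep| \le \rho/2$ and there $|\lambda_n(\ep)| \le M(|n| + 1/2) \le 2M|n|$. By Cauchy's integral formula for Taylor coefficients, applied on the circle $|\ep| = \rho/2$, the coefficient of $\ep^k$ in $\lambda_n(\ep)$ is bounded in modulus by $2M|n|(2/\rho)^k$. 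For $k \ge 1$ this is at most $|n|C^k$ with $C := 4M/\rho$ (enlarging $M$ if necessary so that $M \ge 1/2$, and then $C$ so that $C \ge 1$); combined with the $k = 0$ case this proves the corollary.

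I do not expect any genuine obstacle here; the only point to watch is that the radius $\rho$ and the bound $M$ are uniform in $n$, which is exactly the content of Proposition \ref{lambda-Taylor-bound} together with the fact that $c_\ep$ is a fixed function of $\ep$ alone. As an alternative that avoids passing through function bounds, one may argue directly at the level of coefficients: writing the $k$-th coefficient of $\lambda_n$ as the Cauchy product $\sum_{j=0}^k c^{(j)} a_n^{(k-j)}$, where $|c^{(j)}| \le C_2^j$ are the coefficients of $c_\ep$ and, by the previous step, $|a_n^{(i)}| \le |n|C_1^i$ are the coefficients of $c_\ep^{-1}\lambda_n(\ep)$, one bounds it by $|n|(k+1)\max(C_1,C_2)^k \le |n|\big(2\max(C_1,C_2)\big)^k$ using $k+1 \le 2^k$, which again yields the claim with $C = 2\max(C_1,C_2)$.
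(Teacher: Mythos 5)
Your proposal is correct and takes essentially the same approach as the paper: the paper's one-line proof invokes ``Leibniz's rule'' for the product $\lambda_n(\ep) = c_\ep \cdot \big(c_\ep^{-1}\lambda_n(\ep)\big)$ together with $c_0 = 1$ and $\lambda_n(0) = ni$, which is exactly the Cauchy-product argument you give as the alternative at the end. Your first argument (bounding $|\lambda_n(\ep)| \lesssim |n|$ on a fixed disk and applying Cauchy's integral formula to $\lambda_n$ directly) is an equivalent variant resting on the same facts and is equally fine.
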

\begin{proof}
Since $c_\ep$ is analytic in $\ep$ near 0 with $c_0 = 1$, and $\lambda_n(0) = ni$, the result follows from Leibniz's rule.
\end{proof}

\subsection{Conjugation to a Fourier multiplier}
We have conjugated the eigenspaces of $T = \partial_x + c_\ep^{-1}H + c_\ep^{-1}R_\ep$ (and also of $c_\ep\partial_x + H + R_\ep$) to Fourier modes via the operator
\[
1 + W_\ep = \sum_{n\in\Z} P_nQ_n(\ep)
\]
where $P_0$ is the projection onto the span of $e^{\pm ix}$, $Q_0(\ep)$ is the projection onto the span of $\psi_0^\pm(\ep)$, $P_n$ is the projection onto the span of $e^{i(n + \sgn n)x}$, $Q_n(\ep)$ is the projection onto the span of $\psi_n(\ep)$, $n = \pm1, \pm2, \dots$.

We will view $T$ as a perturbation of $\partial_x + c_\ep^{-1}H$ and follow the proof of Chapter V, Theorem 4.15a in \cite{kato}. In the process we will extract more information from the fact that $R_\ep$ is of class $\mathcal S$. Since
\begin{equation}\label{sum-Pn=1}
P_n^2=P_n,\quad \sum_{n\in\Z} P_n = 1,
\end{equation}
we have that
\begin{equation}\label{Wep-def}
W_\ep = \sum_{n\in\Z} P_n(Q_n(\ep) - P_n)
\end{equation}
and $W_0 = 0$.

\begin{prop}
$W_\ep/\ep$ is of class $\mathcal S$ uniformly in $\ep$.
\end{prop}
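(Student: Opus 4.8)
The plan is to combine the contour–integral representations with the Neumann series \eqref{Von-Neumann}. Writing $P_n=-\frac{1}{2\pi i}\int_{\Gamma_n}(\partial_x+H-z)^{-1}\,dz$ and using \eqref{Qn-ep}, \eqref{Wep-def} and \eqref{Von-Neumann}, for $\ep$ small and $z\in\Gamma_n$ one has
\[
Q_n(\ep)-P_n=-\frac{1}{2\pi i}\int_{\Gamma_n}\sum_{j\ge1}\bigl(-(\partial_x+H-z)^{-1}R_\ep'\bigr)^j(\partial_x+H-z)^{-1}\,dz,
\]
where, by \eqref{R-ep'},
\[
R_\ep'=\underbrace{(c_\ep^{-1}-1)H}_{S_\ep}+\underbrace{c_\ep^{-1}R_\ep}_{T_\ep}.
\]
Here $S_\ep$ is a Fourier multiplier of order zero, analytic in $\ep$ with $S_0=0$, so $\|S_\ep\|\lesssim|\ep|$; it commutes with every $P_m$ and with $(\partial_x+H-z)^{-1}$, and $S_\ep P_n=\sigma_n(\ep)P_n$ for $n\ne0$ with $|\sigma_n(\ep)|=|c_\ep^{-1}-1|\lesssim|\ep|$ uniformly in $n$ (for $n=0$, $S_\ep P_0$ is a fixed bounded operator of size $O(\ep)$ on the two–dimensional space $\ran P_0$). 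On the other hand $T_\ep$ inherits from $R_\ep$ that $T_\ep/\ep$ is of class $\mathcal S$ uniformly in $\ep$. The remaining ingredients are $P_n(\partial_x+H-z)^{-1}=(in-z)^{-1}P_n$ for every $n\in\Z$ (the eigenvalue of $\partial_x+H$ on $\ran P_n$ is $in$), together with $|(in-z)^{-1}|\le2$ and $\|(\partial_x+H-z)^{-1}\|\le2$ for $z\in\Gamma_n$ from \eqref{Gamma-lambda}–\eqref{L0-res-bound}.

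I would left–multiply by $P_n$ and expand $\bigl(-(\partial_x+H-z)^{-1}R_\ep'\bigr)^j$ into the $2^j$ words in the two letters $-(\partial_x+H-z)^{-1}S_\ep$ and $-(\partial_x+H-z)^{-1}T_\ep$. The single word built only from $S_\ep$ equals, by the commutation relations, $(-1)^j(in-z)^{-(j+1)}P_nS_\ep^j$, a fixed multiple of a Fourier projection, and integrates to $0$ over $\Gamma_n$ since $\oint_{\Gamma_n}(in-z)^{-(j+1)}\,dz=0$ for $j\ge1$; this is the step that disposes of the non–smoothing summand of $R_\ep'$. In any word with at least one letter $T_\ep$, let $p\ge1$ be the position of the leftmost such letter; peeling off the initial block of $p-1$ letters $S_\ep$ rewrites the word, after left multiplication by $P_n$ ($n\ne0$), as
\[
\pm(in-z)^{-p}\,\sigma_n(\ep)^{p-1}\,P_n T_\ep\,V\,(\partial_x+H-z)^{-1},
\]
where $V$ is the word (of length $j-p$) formed by the remaining letters, a bounded operator on every Sobolev space with $\|V\|\lesssim(C|\ep|)^{j-p}$.

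For such a term I would bound, on $\Gamma_n$, $|(in-z)^{-p}|\le2^p$ and $|\sigma_n(\ep)^{p-1}|\lesssim|\ep|^{p-1}$; since $P_nT_\ep w$ is a single Fourier mode at frequency $n+\sgn n$, for any integer $K$,
\[
\|P_nT_\ep w\|_{\dot H^k}\le|n+\sgn n|^{\,k-K}\|T_\ep w\|_{\dot H^K}\lesssim_{K,m}|n|^{\,k-K}|\ep|\,\|w\|_{\dot H^{-m}},
\]
using $|n+\sgn n|\ge|n|$ for $n\ne0$ and that $T_\ep/\ep$ is of class $\mathcal S$; and $\|V(\partial_x+H-z)^{-1}\tilde h\|_{\dot H^{-m}}\lesssim(C|\ep|)^{j-p}\|\tilde h\|_{\dot H^{-m}}$. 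Multiplying these and integrating over $\Gamma_n$ (length $\pi$), each such word contributes at most $C_{K,k,m}(C'|\ep|)^{j}\,|n|^{\,k-K}\|\tilde h\|_{\dot H^{-m}}$; summing over the at most $2^j$ words of each length $j$ and over $j\ge1$ yields a convergent geometric series for $|\ep|$ small, giving
\[
\|P_n(Q_n(\ep)-P_n)\tilde h\|_{\dot H^k}\lesssim_{K,k,m}|\ep|\,|n|^{\,k-K}\,\|\tilde h\|_{\dot H^{-m}},\qquad n\ne0.
\]
The case $n=0$ is identical in structure — the pure–$S_\ep$ words again integrate to $0$, and since $\ran P_0$ is finite–dimensional the $\dot H^k$– and $\dot H^K$–norms of $P_0T_\ep w$ are comparable — yielding $\|P_0(Q_0(\ep)-P_0)\tilde h\|_{\dot H^k}\lesssim_{k,m}|\ep|\,\|\tilde h\|_{\dot H^{-m}}$. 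Choosing $K=k+2$ and summing over $n\in\Z$, $\|W_\ep\tilde h\|_{\dot H^k}\le\sum_{n\in\Z}\|P_n(Q_n(\ep)-P_n)\tilde h\|_{\dot H^k}\lesssim_{k,m}|\ep|\,\|\tilde h\|_{\dot H^{-m}}$, which says that $W_\ep/\ep$ is of class $\mathcal S$ uniformly in $\ep$. The main obstacle is precisely that $R_\ep'$ contains the non–smoothing part $(c_\ep^{-1}-1)H$, so the smoothing of $W_\ep$ is not visible term by term; the whole point is the algebraic fact that this part's contribution to $Q_n(\ep)-P_n$ is a Fourier multiple of $P_n$ whose contour integral around $\Gamma_n$ vanishes, which forces every surviving term to carry at least one genuinely smoothing factor $T_\ep=c_\ep^{-1}R_\ep$.
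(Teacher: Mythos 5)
Your proof is correct, but it follows a genuinely different route from the paper's. The paper sidesteps the non-smoothing term $(c_\ep^{-1}-1)H$ entirely by re-basing the perturbation theory on $\partial_x + c_\ep^{-1}H$ (which has the same eigenprojections $P_n$ as $\partial_x + H$, being a Fourier multiplier), so the perturbation becomes the purely smoothing $c_\ep^{-1}R_\ep$; it then invokes the closed-form identity from \cite{kato}, Chapter V, (4.38), which expresses $Q_n(\ep)-P_n$ as two terms each carrying an explicit factor of $R_\ep$, and gets the $n^{-2}$ decay once from $P_n$ and once from the eigenvalue relation $u=\lambda_n(\ep)^{-1}(c_\ep u'+Hu+R_\ep u)$ on $\ran Q_n(\ep)$. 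You instead keep $\partial_x+H$ as the base operator, expand the resolvent difference in a Neumann series, split each term into words in the two letters $S_\ep=(c_\ep^{-1}-1)H$ and $T_\ep=c_\ep^{-1}R_\ep$, and observe (i) the pure-$S_\ep$ word has, after left multiplication by $P_n$, the contour integral of $(in-z)^{-(j+1)}$, which vanishes for $j\ge1$, and (ii) every other word carries at least one factor $P_nT_\ep$, whose combination of high-frequency projection and smoothing gives the $n$-decay. Both arguments are valid; the paper's is slicker because the choice of base operator collapses the Neumann series to a two-term closed formula and removes the non-smoothing part at the outset, while yours is more elementary—it does not depend on Kato's formula (V.4.38) and makes the cancellation mechanism behind the paper's re-basing trick explicit as a vanishing residue.

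One minor point worth noting: you bound $|c_\ep^{-1}-1|\lesssim|\ep|$, which is correct but not tight (it is actually $O(\ep^2)$ since $c_\ep=1-\ep^2/4+O(\ep^3)$). This is harmless, as you only need one factor of $|\ep|$ from the mandatory letter $T_\ep$. Also, for $n=0$ you correctly observe that $\ran P_0$ is two-dimensional and the contour argument still kills the pure-$S_\ep$ words; there the finite-dimensionality makes the $\dot H^k$-norms all comparable, which replaces the $n$-decay that is otherwise obtained from $P_nT_\ep$.
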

\begin{proof}
We bound each term on the right-hand side separately. By Chapter V, (4.38) of \cite{kato},
\[
Q_n(\ep) - P_n = -c_\ep^{-1}Q_n(\ep)R_\ep Z_n(\ep) - c_\ep^{-1}Z_n'(\ep)R_\ep P_n
\]
where
\[
Z_n(\ep) = \frac{1}{2\pi i}\int_{\Gamma_n} (z - (n + (1 - c_\ep^{-1})\sgn n)i)^{-1}(\partial_x + c_\ep^{-1}H - z)^{-1}dz
\]
and
\[
Z_n'(\ep) = \frac{1}{2\pi i}\int_{\Gamma_n} (z - c_\ep^{-1}\lambda_n(\ep))^{-1}(T - z)^{-1}dz.
\]
We now bound the operator norms of the right-hand side, with uniformity in $\ep$ and decay in $n$, in order to show that the sum in $n$ converges.

First note that it is clear from the frequency side that when $\ep$ is in a neighborhood of 0 independent of $n$ and $z \in \cup_{n\in\Z} \Gamma_n$,
for all $m \ge 0$, the operator $(\partial_x + c_\ep^{-1}H - z)^{-1}$ is bounded from $H^m$ to $H^m$, uniformly in $\ep$ and $z$.
Since $R_\ep/\ep$ is of class $\mathcal S$ uniformly in $\ep$ (see \eqref{R-bound} and notice that $R_0=0$),
it follows from the Neumann series that $\|(T - z)^{-1}\|_{\dot H^m\to\dot H^m}$ is finite and only depends on $m$. Since $|z - (n + (1 - c_\ep^{-1})\sgn n)i|$ and $|z - c_\ep^{-1}\lambda_n(\ep)|$ are uniformly bounded from below, both $Z_n(\ep)$ and $Z_n'(\ep)$ are bounded from $\dot H^m$ to $\dot H^m$, uniformly in $\ep$ and $n$.
Since $Q_n(\ep)$ is given by a similar integral (\ref{Qn-ep}),
it also has this property, which is also trivially true for $P_n$.
Now for all $n, m, k \in \Z$, $m, k \ge 0$ and $\tilde h \in L^2$,
\begin{equation}\label{Qn-Pn-bound1}
\begin{aligned}
\|Z_n'(\ep)R_\ep P_n\tilde h\|_{\dot H^k}
&\lesssim_k \|R_\ep P_n\tilde h\|_{\dot H^k}
\lesssim_{m,k} |\ep|\|P_n\tilde h\|_{\dot H^{-m-2}}\\
&\lesssim_{m,k} |\ep|(1 + |n|)^{-2}\|\tilde h\|_{\dot H^{-m}}
\end{aligned}
\end{equation}
because $P_n$ is the projection onto very specific Fourier modes.
For the first term we have
\[
\|R_\ep Z_n(\ep)\tilde h\|_{\dot H^k}
\lesssim_{m,k} |\ep|\|Z_n(\ep)\tilde h\|_{\dot H^{-m}}
\lesssim_{m,k} |\ep|\|\tilde h\|_{\dot H^{-m}}.
\]
To introduce the action of $Q_n(\ep)$, note that the image of $Q_n(\ep)$ lies in the eigenspace of the operator $c_\ep\partial_x + H + R_\ep$,
with eigenvalue $\lambda_n(\ep)$, so for $n \neq 0$ and $u \in \Im Q_n(\ep)$ we have
\[
u = \lambda_n(\ep)^{-1}(c_\ep u' + Hu + R_\ep u)
\]
so $\|u\|_{\dot H^k} \lesssim_k |\lambda_n(\ep)|^{-1}\|u\|_{\dot H^{k+1}}
\lesssim |n|^{-1}\|u\|_{\dot H^{k+1}}$. Hence
\begin{equation}\label{Qn-Pn-bound2}
\|Q_n(\ep)R_\ep Z_n(\ep)\|_{\dot H^k}
\lesssim_k n^{-2}\|R_\ep Z_n(\ep)\|_{\dot H^{k+2}}
\lesssim_{m,k} |\ep|(1 + |n|)^{-2}\|\tilde h\|_{\dot H^{-m}}.
\end{equation}
This also holds for $n=0$ because $R_\ep/\ep$ is of class $\mathcal S$ uniformly, so $W_\ep/\ep$ is of class $\mathcal S$ uniformly in $\ep$ thanks to the convergence of $\sum_{n\in\Z} (1 + |n|)^{-2}$.
\end{proof}

Now for $k = 0, 1, \dots$, there is a neighborhood of 0 such that when $\ep$ is in this neighborhood, $\|W_\ep\|_{\dot H^k\to\dot H^k} < 1$, so $1 + W_\ep: \dot H^k \to \dot H^k$ is invertible. By (\ref{sum-Pn=1}) and (\ref{Wep-def}) it follows easily that
\begin{equation}\label{conjugate}
(1 + W_\ep)Q_n(\ep) = P_n(1 + W_\ep)
\end{equation}
so the eigenspaces of $T$ is conjugated to the (span of) Fourier modes,
and hence $T$ is conjugated to a Fourier multiplier.

We have proven the following lemma:

\begin{lem}\label{conjugado} For $\ep$ small enough, there exists an operator $W_\ep$, such that $W_\ep/\ep$ is of class $\mathcal{S}$, uniformly in $\ep$. Moreover,
\begin{enumerate}
\item $1+W_\ep\,:\, \dot{H}^k\to \dot{H}^k$ is invertible,
\item $(1+W_\ep)Q_n(\ep)=P_n(1+W_\ep)$, $n\in\Z$,
\item If $\psi$ is in the closed linear span of the eigenvectors $\psi_n(\ep)$ ($n \neq 0$) of $c_\ep\partial_x + H + R_\ep$, then
\[
(1+W_\ep)(c_\ep\partial_x+H+R_\ep)\psi=\Lambda_\ep(1+W_\ep)\psi
\]
where $\Lambda_\ep$ is a multiplier such that
\[
\Lambda_\ep e^{i(n+\sgn n)x}=\lambda_n(\ep)e^{i(n+\sgn n)x},\quad
n = \pm1, \pm2, \dots
\]
\end{enumerate}
\end{lem}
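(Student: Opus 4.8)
The plan is to take for $W_\ep$ exactly the operator already constructed just above the statement, namely $1+W_\ep=\sum_{n\in\Z}P_nQ_n(\ep)$, or equivalently $W_\ep=\sum_{n\in\Z}P_n(Q_n(\ep)-P_n)$ using $\sum_nP_n=1$ from \eqref{sum-Pn=1}. The preceding proposition is precisely the statement that $W_\ep/\ep$ is of class $\mathcal S$ uniformly in $\ep$; in particular the defining series converges in the $\dot H^k\to\dot H^k$ operator norm for each $k$, which is what legitimizes the manipulations below. So the lemma requires nothing new about $W_\ep$ itself — it is an assembly of facts already proven.

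For item (1) I would apply \eqref{R-bound} to $W_\ep/\ep$ with $m=k$, getting $\|W_\ep\|_{\dot H^k\to\dot H^k}\lesssim_k|\ep|$; hence for $|\ep|$ small (depending on $k$) one has $\|W_\ep\|<1$, so $1+W_\ep$ is invertible on $\dot H^k$ with $(1+W_\ep)^{-1}=\sum_{j\ge0}(-W_\ep)^j$.

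For items (2) and (3) I would first record the standard facts about the Riesz projections $Q_n(\ep)$ of \eqref{Qn-ep}: each is idempotent and $Q_n(\ep)Q_m(\ep)=0$ for $n\neq m$, because the contours $\Gamma_n$ enclose pairwise disjoint parts of the spectrum of $T=\partial_x+c_\ep^{-1}H+c_\ep^{-1}R_\ep$, which, by the end of Section \ref{SpecLin}, has no spectrum outside $\bigcup_n\Gamma_n$; the $P_n$ satisfy the same relations together with $\sum_nP_n=1$. Granting the norm convergence of the series, for each fixed $m$ this gives
\begin{align*}
(1+W_\ep)Q_m(\ep)&=\sum_{n\in\Z}P_nQ_n(\ep)Q_m(\ep)=P_mQ_m(\ep)\\
&=\sum_{n\in\Z}P_mP_nQ_n(\ep)=P_m(1+W_\ep),
\end{align*}
which is \eqref{conjugate} and proves (2). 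For (3), write $T_\ep=c_\ep\partial_x+H+R_\ep$; for $n\neq0$, $\ran Q_n(\ep)=\operatorname{span}(\psi_n(\ep))$ is the eigenspace of $T_\ep$ with eigenvalue $\lambda_n(\ep)$. I would verify the identity first for a finite combination $\psi=\sum_{n\in F}c_n\psi_n(\ep)$ with $F\subset\Z\setminus\{0\}$ finite, where $T_\ep\psi=\sum_{n\in F}c_n\lambda_n(\ep)\psi_n(\ep)$, so by (2)
\begin{align*}
(1+W_\ep)T_\ep\psi&=\sum_{n\in F}c_n\lambda_n(\ep)(1+W_\ep)Q_n(\ep)\psi\\
&=\sum_{n\in F}c_n\lambda_n(\ep)P_n(1+W_\ep)\psi=\Lambda_\ep(1+W_\ep)\psi,
\end{align*}
the last step because $P_n$ projects onto $\operatorname{span}(e^{i(n+\sgn n)x})$ and $(1+W_\ep)\psi$ lies in the span of these modes, $n\neq0$, so $\sum_{n\neq0}\lambda_n(\ep)P_n$ acts on it as the multiplier $\Lambda_\ep$; one then passes to general $\psi$ in the closed span by density, interpreting the identity on the natural domain (the right side being defined exactly when $(1+W_\ep)\psi\in\dot H^{k+1}$, equivalently $T_\ep\psi\in\dot H^k$).

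I do not expect a serious obstacle: the analytically substantial input — the class-$\mathcal S$ bound on $W_\ep/\ep$, the analyticity of the projections and eigenvalues, and the exhaustion of the spectrum by the $\Gamma_n$ — is already in hand. The two points needing care are the justification of the rearrangements of $\sum_nP_nQ_n(\ep)$ in (2) and (3), for which the $\dot H^k$-norm convergence supplied by the class-$\mathcal S$ property is exactly what is required, and the bookkeeping that the neighbourhood of $0$ on which $\|W_\ep\|_{\dot H^k\to\dot H^k}<1$ may shrink as $k$ grows, which is harmless since the lemma is applied at a fixed finite regularity.
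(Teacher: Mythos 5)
Your proposal is correct and follows essentially the same route as the paper: it takes $W_\ep$ to be the operator $\sum_{n\in\Z}P_n(Q_n(\ep)-P_n)$ already constructed and bounded in the preceding proposition, deduces the invertibility in item (1) from the class-$\mathcal S$ bound and the Neumann series, and obtains (2)--(3) from the algebraic relations $P_nP_m=\delta_{nm}P_m$, $Q_n(\ep)Q_m(\ep)=\delta_{nm}Q_m(\ep)$, $\sum_nP_n=1$ and the eigenspace characterization of $\ran Q_n(\ep)$. (A minor bookkeeping slip: in the displayed sums for item (3) the extra factor $c_n$ should not appear, since it is already absorbed in $Q_n(\ep)\psi=c_n\psi_n(\ep)$, respectively in $P_n(1+W_\ep)\psi$; this does not affect the argument.)
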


\subsection{Taylor expansion of eigenvalues}
Now we Taylor expand the eigenvalues $\lambda_n(\ep)$ for $n \neq 0$.
To do so it is more convenient to study the eigenvalue problem (\ref{eigen-h}) for $h$:
\[
L_\ep g : = \left((u_\ep-v_\ep)g\right)'+Hg=\lambda(\ep)g.
\]
Recall the operator $L = L_0 = \partial_x + H$ whose action on the Fourier modes is
\[
\mathcal F(Lg)(m) = i(m - \sgn m)\hat g(m)
\]
with eigenvalues 0 (double), $\pm i$, $\pm2i$, $\dots$ ($g$ mean zero).

Since $(u_\ep, v_\ep)$ is analytic in $\ep$ on a neighborhood of 0, and
\[
\|h'\|_{L^2} \le \|h' + Hh\|_{L^2} + \|Hh\|_{L^2} = \|Lh\|_{L^2} + \|h\|_{L^2},
\]
by Chapter VII, Theorem 2.6 in \cite{kato}, $L_\ep$ is a holomorphic family of operators of type (A), so by Chapter VII, Section 2.3, all the results in Chapter II, Sections 1 and 2 apply, and we can compute the Taylor coefficients of $\lambda(\ep)$ as if $L_\ep$ acted on a finite dimensional vector space.

We start with computing the resolvent of $L$:
\[
R(z) = (L - z)^{-1}
\]
whose action on the Fourier modes is
\[
\mathcal F(R(z)g)(m) = (i(m - \sgn m) - z)^{-1}\hat g(m).
\]
Around the eigenvalue $ni$ ($n = \pm1, \pm2, \dots$) we have the expansion
\[
R(z) = (ni - z)^{-1}P_n + \sum_{k=0}^\infty (z - ni)^kS_n^{k+1}
\]
where $P_n$ is the projection on the span of $e^{i(n+\sgn n)x}$ and
\begin{equation}\label{Sn-def}
\mathcal F(S_ng)(m) = \frac{\hat g(m)}{i(m - \sgn m - n)},\quad
m \neq n + \sgn n.
\end{equation}

By \cite{kato} (II.2.33),
\[
\lambda_n(\ep) = ni + \sum_{k=1}^\infty \ep^k\lambda_n^{(k)},\quad
n = \pm1, \pm2, \dots
\]
where
\[
\lambda_n^{(k)} = \sum_{p=1}^k \frac{(-1)^p}{p} \sum_{{v_1+\cdots+v_p=n\atop v_j\ge1}\atop h_1+\cdots+h_p=p-1} \Tr L^{(v_p)}S_n^{(h_p)} \cdots L^{(v_1)}S_n^{(h_1)}
\]
where $S_n^{(0)} = -P_n$ and for $h \ge 1$, $S_n^{(h)} = S_n^h$, with $S_n$ defined in (\ref{Sn-def}), and $L^{(v)}$ is the coefficient of $\ep^v$ in the Taylor expansion of $L_\ep$. Note that the constraints in the summation imply that there is some $j \in \{1, \dots, p\}$ such that $h_j = 0$ and so $S_n^{(h_j)} = -P_n$, so every summand is a finite-rank operator whose trace is thus well defined.

\begin{lem}
If $A$ is a finite-rank operator, then $\Tr AB = \Tr BA$.
\end{lem}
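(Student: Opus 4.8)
The plan is to reduce the identity to the case of rank-one operators, where it is transparent, and then reassemble. Since $A$ has finite rank, say $N=\dim\ran A$, I would choose vectors $\psi_1,\dots,\psi_N$ spanning $\ran A$ together with bounded linear functionals $f_1,\dots,f_N$ (in the Hilbert-space setting $f_i=\langle\,\cdot\,,\varphi_i\rangle$) so that $Ax=\sum_{i=1}^N f_i(x)\psi_i$ for every $x$. The first step is to record the elementary fact that the trace of such a finite-rank operator is $\Tr A=\sum_{i=1}^N f_i(\psi_i)$, independently of the chosen representation: expanding $\Tr A=\sum_k\langle Ae_k,e_k\rangle$ in any orthonormal basis $\{e_k\}$ and using continuity and linearity of the $f_i$ collapses the double sum to $\sum_i f_i(\psi_i)$. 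On rank-one pieces this reads $\Tr(\psi\otimes f)=f(\psi)$.

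With this in hand the computation is manifestly symmetric in $A$ and $B$. Setting $g_i:=f_i\circ B$, which is again a bounded functional, one has $(AB)x=\sum_i f_i(Bx)\psi_i=\sum_i g_i(x)\psi_i$, so $AB$ has finite rank (with $\ran(AB)\subseteq\ran A$) and $\Tr(AB)=\sum_i g_i(\psi_i)=\sum_i f_i(B\psi_i)$. On the other hand $(BA)x=B\sum_i f_i(x)\psi_i=\sum_i f_i(x)\,(B\psi_i)$, so $BA$ also has finite rank and $\Tr(BA)=\sum_i f_i(B\psi_i)$. The two expressions agree, which is the claim. An equivalent repackaging, which avoids even choosing functionals, is to factor $A=\iota\tilde A$ through $W:=\ran A$, with $\tilde A\colon H\to W$ and $\iota\colon W\hookrightarrow H$; then $AB=\iota(\tilde A B)$ and $BA=(B\iota)\tilde A$, and both $\Tr(AB)$ and $\Tr(BA)$ equal $\Tr(\tilde A B\iota)$, the trace of a genuine $N\times N$ matrix. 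This is just the version of cyclicity of the trace in which one of the two factors maps into a finite-dimensional space, which one checks directly in an orthonormal basis adapted to $W$.

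There is no analytic content, so the only point that needs a bit of care is the bookkeeping used above: the well-definedness of the trace on finite-rank operators and the rank-one formula $\Tr(\psi\otimes f)=f(\psi)$, together with the mild assumption that $B$ is such that the compositions $AB$, $BA$ and the vectors $B\psi_i$ and functionals $f_i\circ B$ all make sense. In our application this is automatic: $A$ is one of the finite-rank blocks obtained by absorbing a factor $-P_n$ into the product, so $A=\psi\otimes f$ with $\psi$ smooth and $f$ a pairing against a fixed Fourier mode, and $B$ (a composition of the operators $S_n$ and $L^{(v)}$) and its adjoint act harmlessly on smooth functions. Once this is fixed, everything collapses to the finite sum $\sum_{i=1}^N f_i(B\psi_i)$, which does not see the order of $A$ and $B$; I expect that verifying representation-independence of the trace is the only step a referee might ask to see spelled out, and it is standard.
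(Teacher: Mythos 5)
Your argument is correct and is essentially the same as the paper's: both reduce to the rank-one case $A(\cdot)=f(\cdot)v$ by linearity and then compute directly that $\Tr AB = f(Bv) = \Tr BA$. The only cosmetic difference is that you insist the functionals $f_i$ be bounded, whereas the paper explicitly allows $f$ to be a not-necessarily-continuous linear functional (which is the slightly more general form and is harmless here since every summand under the trace is finite-rank).
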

\begin{proof}
By linearity we can assume $A$ has the form $A(\cdot) = f(\cdot)v$ for some (not necessarily continuous) linear functional $f$.
Then $\Tr A = f(v)$. Since $AB(\cdot) = f(B\cdot)v$ and $BA(\cdot) = f(\cdot)Bv$, it follows that $\Tr AB = f(Bv) = \Tr BA$.
\end{proof}

Using the lemma above, we can simplify the sum in $\lambda_n^{(k)}$ a little. Indeed, there are $p$ circular rotations of the tuple $(h_1, \dots, h_p)$. Since $(\sum_j h_j, p) = 1$, the $p$ circular rotations are all distinct, so we can choose the lexicographically smallest one as a representative. For such a representative $h_1 = \min_j h_j = 0$, so $S_n^{(h_1)} = -P_n$, so we only need to act $L^{(v_p)}S_n^{(h_p)} \cdots L^{(v_1)}$ on $e^{i(n+\sgn n)x}$ and take the $(n + \sgn n)$-th mode to compute the trace. Thus
\begin{equation}\label{lambda-n}
\lambda_n^{(k)} = \sum_{p=1}^k (-1)^{p-1} \sum_{{v_1+\cdots+v_p=k\atop v_j\ge1}\atop{h_1+\cdots+h_p=p-1\atop{(h_1,\dots,h_p)\atop\text{is a representative}}}} \mathcal F[L^{(v_p)}S_n^{(h_p)} \cdots L^{(v_1)}e^{i(n+\sgn n)x}](n + \sgn n).
\end{equation}

Let us compute some terms  $\lambda_n^{(k)}$ by using the formula \eqref{lambda-n}. We have that

\[
\lambda_n^{(1)} = \Tr L^{(1)}P_n = 0
\]
because $L_1$ shifts the mode by 1, and
\[
\lambda_n^{(2)} = \Tr(L^{(2)}P_n - L^{(1)}S_nL^{(1)}P_n)
\]

Put $s = \sgn n$. We extract the $(n+s)$-th mode of each term:
\begin{align*}
\Tr L^{(2)}P_n &= \mathcal F[L^{(2)}2e^{i(n+s)x}](n+s) = \frac{i(n+s)}{4}\\
L^{(1)}S_nL^{(1)}e^{i(n+s)x}
&= \frac{iL^{(1)}S_n}{2}((n+s+1)e^{i(n+s+1)x} + (n+s-1)e^{i(n+s-1)x})\\
&= \frac{L^{(1)}}{2}((n+s+1)e^{i(n+s+1)x} - (n+s-1)e^{i(n+s-1)x})\\
\Tr L^{(1)}S_nL^{(1)}P_n &= \frac{i(n+s+1)(n+s)-i(n+s-1)(n+s)}{4}
= \frac{i(n+s)}{2}
\end{align*}

so
\[
\lambda_n^{(2)} = \frac{i(n+s)}{4} - \frac{2i(n+s)}{4} = -\frac{i(n+s)}{4}.
\]
We can further compute that
\[
\lambda_n(\ep) = in - \frac{\ep^2i(n+s)}{4} - \frac{11\ep^4i(n+s)}{32}
- \frac{527i\ep^6(n+s)}{768} + O_n(\ep^7)
\]
for $n = \pm 1, \pm2, \pm3, \dots$.

\begin{prop}\label{lambda-Taylor}
For $n = \pm1, \pm2, \dots$,
\[
\lambda_n^{(k)} =
\begin{cases}
0, & 2 \nmid k\\
ic^{(k)}(n + \sgn n), & k \le 2|n| + 2
\end{cases}
\]
where $c^{(k)}$ is the $k$-th Taylor coefficient of $c_\ep$ as defined in (\ref{c-ep-def}).

When $k \ge 2|n| + 4$, $\lambda_n^{(k)}$ is still purely imaginary but the formula $\lambda_n^{(k)}=ic^{(k)}(n+\sgn n)$ does not hold in general.
\end{prop}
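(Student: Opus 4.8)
The plan is to compare $\lambda_n(\ep)$ with $\mu_n(\ep):=i\bigl((n+\sgn n)c_\ep-\sgn n\bigr)$, the eigenvalue of the Fourier multiplier $c_\ep\partial_x+H$ on the mode $e^{i(n+\sgn n)x}$. Since $c_0=1$ one has $\mu_n^{(0)}=in$ and $\mu_n^{(k)}=ic^{(k)}(n+\sgn n)$ for $k\ge1$, so the first two assertions reduce to: $\lambda_n(\ep)-\mu_n(\ep)=O(\ep^{\,2|n|+4})$, and all $\lambda_n^{(k)}$ are purely imaginary. Because $L_\ep$ has real coefficients one has $\lambda_{-n}^{(k)}=\overline{\lambda_n^{(k)}}$, while $c^{(k)}\in\R$ and $(-n)+\sgn(-n)=-(n+\sgn n)$, so it suffices to treat $n\ge1$; then $n+\sgn n=n+1$.

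\emph{A quasimode.} Put $h^{\mathrm{app}}:=e^{i(n+1)\phi_\ep}$ and $g^{\mathrm{app}}:=(h^{\mathrm{app}})'$. By \eqref{phi-ep'} and \eqref{c-ep-def}, $(u_\ep-v_\ep)\phi_\ep'=c_\ep$, hence $(u_\ep-v_\ep)(h^{\mathrm{app}})'=i(n+1)c_\ep\,h^{\mathrm{app}}$ and so $(u_\ep-v_\ep)(h^{\mathrm{app}})'+Hh^{\mathrm{app}}=\mu_n(\ep)\,h^{\mathrm{app}}+(H+i)h^{\mathrm{app}}$. Since $H$ acts by $-i$, $0$, $+i$ on positive, zero and negative frequencies, $(H+i)h^{\mathrm{app}}=2i\,\Pi_{<0}h^{\mathrm{app}}+i\,\Pi_0 h^{\mathrm{app}}$, with $\Pi_{<0},\Pi_0$ the corresponding frequency projections; differentiating in $x$, which annihilates the constant $i\,\Pi_0 h^{\mathrm{app}}$, gives
\[
L_\ep g^{\mathrm{app}}=\mu_n(\ep)\,g^{\mathrm{app}}+2i\,\Pi_{<0}g^{\mathrm{app}}.
\]
Now the key frequency budget: by \eqref{subu} and \eqref{phi-ep'}--\eqref{c-ep-def}, the coefficient of $\ep^{\ell}$ in $u_\ep-v_\ep$, hence in $1/(u_\ep-v_\ep)$, in $\phi_\ep'$ and in $\phi_\ep-x$, is a trigonometric polynomial of degree $\le\ell$; consequently $[\ep^{\ell}]h^{\mathrm{app}}$ and $[\ep^{\ell}]g^{\mathrm{app}}$ are supported on frequencies in $[\,n+1-\ell,\ n+1+\ell\,]$. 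In particular $\Pi_{<0}g^{\mathrm{app}}=O(\ep^{\,n+2})$ and its frequency $-m$ component ($m\ge1$) is $O(\ep^{\,n+1+m})$.

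\emph{Lyapunov--Schmidt and the count.} By the analytic perturbation theory already in place ($L_\ep$ is a holomorphic family of type (A) with a simple analytic eigenvalue $\lambda_n(\ep)$, an analytic eigenfunction $g_n$ and an analytic eigenfunction $\chi_n$ of $L_\ep^{*}g=-(u_\ep-v_\ep)g'-Hg$, all associated with the mode $e^{i(n+1)x}$ at $\ep=0$, with $\langle g_n,\chi_n\rangle$ analytic and nonzero near $0$), normalize $g_n$ so that $\langle g_n,\chi_n\rangle=\langle g^{\mathrm{app}},\chi_n\rangle$; writing $g_n=g^{\mathrm{app}}+\delta g$ so that $\langle\delta g,\chi_n\rangle=0$, and pairing $L_\ep g_n-\mu_n g_n=2i\,\Pi_{<0}g^{\mathrm{app}}+(L_\ep-\mu_n)\delta g$ against $\chi_n$ (using $(L_\ep^{*}-\overline{\mu_n})\chi_n=\overline{(\lambda_n-\mu_n)}\,\chi_n$), one obtains the exact identity
\[
\lambda_n(\ep)-\mu_n(\ep)=\frac{2i\,\langle\Pi_{<0}g^{\mathrm{app}},\chi_n\rangle}{\langle g^{\mathrm{app}},\chi_n\rangle}.
\]
The adjoint recursion has the same structure (a multiplication by $u_\ep-v_\ep$ and a Fourier multiplier at each step), so $[\ep^{j}]\chi_n$ is again supported on frequencies in $[\,n+1-j,\ n+1+j\,]$ and its frequency $-m$ component is $O(\ep^{\,n+1+m})$. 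Hence by Plancherel $\langle\Pi_{<0}g^{\mathrm{app}},\chi_n\rangle=\sum_{m\ge1}\widehat{\Pi_{<0}g^{\mathrm{app}}}(-m)\,\overline{\widehat{\chi_n}(-m)}=O(\ep^{\,2(n+2)})=O(\ep^{\,2n+4})$, the denominator being analytic and nonzero at $0$; therefore $\lambda_n^{(k)}=\mu_n^{(k)}=ic^{(k)}(n+\sgn n)$ for $1\le k\le2n+3$, in particular for $k\le2|n|+2$.

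\emph{The remaining claims, and the obstacle.} Vanishing at odd $k$ and purely imaginary character are read off from \eqref{lambda-n}: each $u^{(v)}$ contains only frequencies $\equiv v\ (\mathrm{mod}\ 2)$ and $v^{(v)}=0$ for odd $v$ (the symmetry $u_\ep(x)=u_{-\ep}(x+\pi)$), so every $L^{(v)}$ shifts frequency by an amount $\equiv v\ (\mathrm{mod}\ 2)$ while $S_n,P_n$ preserve frequency; since each summand returns to the mode $n+\sgn n$ the total shift is $0$, forcing $\sum v_j=k$ even. For imaginarity, each $L^{(v)}$ and each power of $S_n$ has a purely imaginary Fourier matrix (the factor $\partial_x$, respectively the denominators $i(m-\sgn m-n)$) and $P_n$ is real, so a representative summand is a product of $p$ factors $L^{(\cdot)}$, $p-1$ factors $S_n$ and one $P_n$ — an odd number $2p-1$ of imaginary factors — hence purely imaginary, as is $(-1)^{p-1}$ times it; this holds for every $k$. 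Finally, the same count shows that the coefficient of $\ep^{2n+4}$ in $\langle\Pi_{<0}g^{\mathrm{app}},\chi_n\rangle$ equals the product of the (top-degree) coefficient of $e^{-ix}$ in $[\ep^{n+2}]g^{\mathrm{app}}$ and the conjugate of that in $[\ep^{n+2}]\chi_n$, which are explicit multilinear expressions in the Taylor coefficients of $u_\ep$ that do not vanish identically, so $\lambda_n^{(k)}=ic^{(k)}(n+\sgn n)$ cannot hold for all $k\ge2|n|+4$. The main work will be the frequency-support bookkeeping through the $\ep$-expansion of $h^{\mathrm{app}}$, $g^{\mathrm{app}}$ and $\chi_n$; the one point requiring care is that passing from the $h$-problem to the $g$-problem (the ``$\pmod 1$'' constant of integration) removes the otherwise dangerous $\Pi_0 h^{\mathrm{app}}$ term, and this is exactly what sharpens the threshold from $2|n|+2$ to $2|n|+4$, everything else being routine analytic perturbation theory of the kind already developed above.
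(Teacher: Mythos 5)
Your argument is correct, but it takes a genuinely different route from the paper's. The paper works entirely inside the perturbation--series formula \eqref{lambda-n}: for $k\le 2|n|+2$ the Fourier mode visited in each summand $L^{(v_p)}S_n^{(h_p)}\cdots L^{(v_1)}e^{i(n+\sgn n)x}$ never changes sign, so the reduced resolvent $S_n$ may be replaced by $S_n'$, the analogous object for the \emph{smooth} operator $L^+g=g'-i\,(\sgn n)\,g$; since $L^+_\ep=((u_\ep-v_\ep)g)'-i\,(\sgn n)\,g$ is diagonalized \emph{exactly} by the same $\phi_\ep$-conjugation, with eigenvalues $n'c_\ep i-i\,\sgn n$, the closed form $\lambda_n(\ep)=(n+\sgn n)c_\ep i-i\,\sgn n+O(\ep^{2|n|+4})$ drops out with no bookkeeping. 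You instead produce the explicit quasimode $h^{\mathrm{app}}=e^{i(n+\sgn n)\phi_\ep}$ (in the paper's $\tilde h$-variables this is just the unperturbed Fourier mode), observe that the only obstruction to it being an exact eigenfunction is the negative-frequency remainder $2i\,\Pi_{<0}g^{\mathrm{app}}$, and extract $\lambda_n-\mu_n$ through a Lyapunov--Schmidt pairing against the adjoint eigenvector $\chi_n$, then suppress the remainder by counting how the $\ep$-expansion spreads across frequencies. The underlying structural fact is the same --- up to order $\ep^{2|n|+2}$ the Hilbert transform can be replaced by the constant $-i\,\sgn n$ --- but your formulation buys a clean exact identity $\lambda_n-\mu_n=2i\langle\Pi_{<0}g^{\mathrm{app}},\chi_n\rangle/\langle g^{\mathrm{app}},\chi_n\rangle$ and makes transparent why the threshold is $2|n|+4$ rather than $2|n|+2$ (both factors in the Plancherel pairing start at $\ep^{|n|+2}$ once the constant mode is differentiated away); the paper's $S_n\to S_n'$ swap is shorter because it reduces to a literally solvable model.

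Two places would need tightening in a full write-up. First, the frequency--support statement for $\chi_n$ must actually be derived: it does follow from running the perturbation recursion for $L_\ep^*\chi=-(u_\ep-v_\ep)\chi'-H\chi$, since each $L^{(v)*}$ shifts frequency by at most $v$ and the reduced resolvent of $L_0^*-\overline{\lambda_n^{(0)}}$ is a Fourier multiplier, but as stated it is an assertion. Second, for the ``does not hold in general'' part, the observation that the leading obstruction is a nonvanishing multilinear expression only shows it is nonzero \emph{generically}; to conclude for the actual $u_\ep$ one should, as the paper does, simply exhibit the explicit value $\lambda_1^{(6)}=-\tfrac{529}{384}i\neq 2ic^{(6)}=-\tfrac{527}{384}i$.
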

\begin{proof}
Firstly we notice that, for $n = \pm1$ the coefficient of $\ep^6$ in $\lambda_{\pm 1}(\ep)$ is,
\[
\lambda_1(\ep) = i - \frac{\ep^2i}{2} - \frac{11\ep^4i}{16} - \frac{529\ep^6i}{384} + O(\ep^7),
\]
which does not hold $\lambda_{\pm 1}^{(6)}=\pm 2 ic^{(6)}.$

Next, we prove the fist part of the lemma. In each summand of (\ref{lambda-n}), all the coefficients are real,
except that each operator $L$ brings a factor of $i$ to the Fourier coefficients (via the operator $\partial_x$),
and each operator $S_n$ removes a factor of $i$ (see (\ref{Sn-def})).
Hence each summand is purely imaginary, and so is $\lambda_n^{(k)}$.

In each summand of (\ref{lambda-n}), the operator $S_n^{(h_j)}$ is a Fourier multiplier that does not shift the modes, while the operator $L^{(m)}g = \left((u^{(m)} - v^{(m)})g\right)'$ shifts the modes by at most $m$ because $u^{(m)}$ only contains modes up to $e^{\pm imx}$. Also the amount of shift $= m \pmod 2$. Thus when acting the sequence $L^{(v_p)}S_n^{(h_p)} \cdots L^{(v_1)}$ on $e^{i(n+s)x}$, the mode is consecutively shifted by at most $v_1, v_2, \dots, v_p$, and the total amount of shifts $= \sum_j v_j = k \pmod 2$. Since in the end we are taking the $(n + s)$-th mode, the total amount of shifts must be 0 in order to count, so when $k$ is odd $\lambda_n^{(k)} = 0$. When $k$ is even, the mode $e^{i(n+s)x}$ can only be shifted as far as $e^{i(n+s\pm k/2)x}$; otherwise it can never be shifted back. Hence when $k \le 2|n| + 2 = 2|n + s|$, the frequency always has the same sign as $n$ or becomes 0. In the former case we can take $\sgn m = \sgn n$ in (\ref{Sn-def}), while in the latter case the derivative in $L$ kills it, so it does not hurt if we still take $\sgn m = \sgn n$ in (\ref{Sn-def}). Either way we can take $\sgn m = \sgn n$ in (\ref{Sn-def}). Thus the action of $S_n$ is the same as that of $S_n'$, where
\[
\mathcal F(S_n'g)(m) = \frac{\hat g(m)}{i(m - n - \sgn n)},\quad
m \neq n + \sgn n.
\]

For $n > 0$, the operator $S_n'$ is the analog of $S_n$ for $L^+$ with
\[
\mathcal F(L^+g)(m) = i(m - 1)\hat g(m),
\]
i.e., $L^+g = g' - ig$. Hence $\lambda_n^{(k)}$ remains the same if we replace $L$ with $L^+$. Now we have that
\[
L_\ep^+g := L^+g + \sum_{n=1}^\infty \ep^nL^{(n)}g
= -v_\ep g' - ig + (u_\ep g)' = ((u_\ep - v_\ep)g)' - ig
\]
whose eigenvalue problem is
\[
((u_\ep - v_\ep)g)' - ig = \lambda^+(\ep)g.
\]
Using the same change of variable as in Section \ref{SimpLin},
the problem above can be transformed to
\[
\tilde h' - ic_\ep^{-1}\tilde h = c_\ep^{-1}\lambda^+(\ep)\tilde h
\]
whose eigenvalues are
\[
\lambda_{n'}^+(\ep) = n'c_\ep i - i.
\]
Since when $\ep \to 0$, $\lambda_n(\ep) \to ni$ and $c_\ep \to 1$,
we must have that $n' = n + 1$, and so
\[
\lambda_n(\ep) = (n + 1)c_\ep i - i + O_n(\ep^{2n+4}).
\]

For $n < 0$, note that since $L$ preserves real-valued functions,
its eigenvalues come in conjugate pairs, so $\lambda_n(\ep) = \overline{\lambda_{|n|}(\ep)} = -\lambda_{|n|}(\ep)$ has the same property.
\end{proof}

\begin{cor}\label{lambda-remainder}
When $\ep$ is small enough,
\begin{align*}
|\lambda_n(\ep) - (n + \sgn n)c_\ep i + i\sgn n| &< |n|(C\ep)^{2|n|+4}
< C'\ep^6, \quad n \in \Z \backslash \{0\},\\
|\lambda_n'(\ep) - (n + \sgn n)\partial_\ep c_\ep i| &< |n|(C\ep)^{2|n|+3}
< C'\ep^5, \quad n \in \Z \backslash \{0\}
\end{align*}
for some constant $C$, $C' > 0$ independent of $n$.
\end{cor}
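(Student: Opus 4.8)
The plan is to work with the convergent Taylor expansions $\lambda_n(\ep) = \sum_{k\ge0}\lambda_n^{(k)}\ep^k$, with $\lambda_n^{(0)} = ni$, and $c_\ep = \sum_{k\ge0}c^{(k)}\ep^k$, with $c^{(0)} = 1$, both valid on a neighborhood of $0$ independent of $n$ by the analyticity established in Section \ref{SpecLin} and Proposition \ref{lambda-Taylor-bound}. Since $(n+\sgn n)i - i\sgn n = ni = \lambda_n^{(0)}$, the quantity to be estimated is the analytic function
\[
\lambda_n(\ep) - (n+\sgn n)c_\ep i + i\sgn n = \sum_{k\ge1}\bigl(\lambda_n^{(k)} - ic^{(k)}(n+\sgn n)\bigr)\ep^k,
\]
and term-by-term differentiation gives the analogous series for $\lambda_n'(\ep) - (n+\sgn n)(\partial_\ep c_\ep)i$. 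So everything reduces to controlling the coefficients $\lambda_n^{(k)} - ic^{(k)}(n+\sgn n)$.

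First I would check that these coefficients vanish for every $k \le 2|n|+3$. For odd $k$, Proposition \ref{lambda-Taylor} gives $\lambda_n^{(k)} = 0$; comparing with the identity $\lambda_n^{(k)} = ic^{(k)}(n+\sgn n)$, valid for $k \le 2|n|+2$, and using $|n+\sgn n| \ge 2$, this forces $c^{(k)} = 0$ for every odd $k$ (choose $n$ with $2|n|+2 \ge k$). Hence the odd coefficients of the difference series vanish identically. For even $k \le 2|n|+2$ the two terms coincide by Proposition \ref{lambda-Taylor}, and $k = 2|n|+3$ is odd; therefore
\[
\lambda_n(\ep) - (n+\sgn n)c_\ep i + i\sgn n = \sum_{k\ge 2|n|+4}\bigl(\lambda_n^{(k)} - ic^{(k)}(n+\sgn n)\bigr)\ep^k.
\]

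Second, I would estimate this tail with a geometric series. Corollary \ref{lambda-Taylor-bound-cor} gives $|\lambda_n^{(k)}| \le |n|C^k$, and, since $c_\ep$ is analytic near $0$, its Taylor coefficients satisfy $|c^{(k)}| \le C^k$ after enlarging $C$; hence $|\lambda_n^{(k)} - ic^{(k)}(n+\sgn n)| \le 3|n|C^k$. Summing over $k \ge 2|n|+4$ and taking $\ep$ small enough that $C\ep < 1/2$ yields, after absorbing the geometric-sum constant into $C$, the intermediate bound $|n|(C\ep)^{2|n|+4}$; and since $m \mapsto m(C\ep)^{2m-2}$ is bounded uniformly on $m \ge 1$ when $C\ep < 1$, this is $< C'\ep^6$. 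For $\lambda_n'(\ep)$ the only differences are that differentiation inserts a factor $k$ in each coefficient — harmless after enlarging $C$, since $k \le 2^k$ — and lowers each power of $\ep$ by one, so the same computation produces $|n|(C\ep)^{2|n|+3} < C'\ep^5$.

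I do not expect a genuine obstacle: this corollary is essentially a bookkeeping consequence of Proposition \ref{lambda-Taylor} together with the Taylor-coefficient bound of Corollary \ref{lambda-Taylor-bound-cor}. The two points that need a little care are that the gap between the exponents $2|n|+2$ and $2|n|+4$ is harmless, because the intermediate index $k = 2|n|+3$ is odd and hence already covered, and that the factor $|n|$ has to be tracked correctly through the reduction from $(C\ep)^{2|n|+4}$ to the fixed power $\ep^6$, which is precisely the statement that $m \mapsto m(C\ep)^{2m-2}$ stays bounded on $m \ge 1$.
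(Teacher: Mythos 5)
Your proof is correct and follows essentially the same route as the paper: Proposition~\ref{lambda-Taylor} shows the two Taylor series agree through order $\ep^{2|n|+3}$, Corollary~\ref{lambda-Taylor-bound-cor} (plus analyticity of $c_\ep$) controls the tail by a geometric series giving $|n|(C\ep)^{2|n|+4}$, and the uniform bound on $|n|(C\ep)^{2|n|-2}$ closes the chain. The only difference is that you spell out explicitly why $c^{(k)}=0$ for odd $k$ (hence why the index $k=2|n|+3$ is harmless), a point the paper treats as immediate from Proposition~\ref{lambda-Taylor}.
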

\begin{proof}
By Proposition \ref{lambda-Taylor}. the Taylor expansions of $\lambda_n(\ep)$ and $(n + \sgn n)c_\ep i - i\sgn n$ differ only from the term $\ep^{2|n|+4}$. By Corollary \ref{lambda-Taylor-bound-cor}, the error terms of the former sum up to $O(|n|\sum_{k=2|n|+4}^\infty (C\ep)^k) = O(|n|(C\ep)^{2|n|+4})$ if, say, $C|\ep| < 1/2$. The error term of the latter clearly also satisfy this bound.

To extend the chain of inequalities it suffices to note that $|n|(C\ep)^{2|n|-2}$ is uniformly bounded for $n \neq 0$ if $|C\ep| < 1/2$.
\end{proof}

\subsection{Time resonance analysis}
For $m$, $n$ and $l \in \Z$ we consider
\[
\lambda_m(\ep) + \lambda_n(\ep) + \lambda_l(\ep)
= (m + n + l)c_\ep i + (\sgn m + \sgn n + \sgn l)(c_\ep - 1)i + O(\ep^6).
\]

\begin{prop}\label{non-res}
If $m$, $n$, $l \in \Z$ and $mnl \neq 0$, then when $\ep$ is small enough,
$|\lambda_m(\ep) + \lambda_n(\ep) + \lambda_l(\ep)| > \ep^2/5$.
\end{prop}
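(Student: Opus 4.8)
The plan is to feed the asymptotic expansion recorded immediately above the statement,
\[
\lambda_m(\ep) + \lambda_n(\ep) + \lambda_l(\ep)
= (m+n+l)c_\ep i + (\sgn m + \sgn n + \sgn l)(c_\ep - 1)i + O(\ep^6),
\]
which follows from Corollary \ref{lambda-remainder}, into a plain triangle inequality. The only genuine input is the non-degeneracy $c_\ep = 1 - \tfrac{\ep^2}{4} + O(\ep^4)$, that is $c^{(2)} = -\tfrac14 \ne 0$; this may be read off either from the value $\lambda_n^{(2)} = -\tfrac{i(n+\sgn n)}{4}$ computed in the subsection preceding Proposition \ref{lambda-Taylor} (matched against $\lambda_n(\ep) = (n+\sgn n)c_\ep i - i\sgn n + O_n(\ep^{2|n|+4})$), or directly from \eqref{c-ep-def} and \eqref{uv-Taylor-exp}. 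Set $N = m+n+l \in \Z$ and $\sigma = \sgn m + \sgn n + \sgn l$. Since $mnl \ne 0$, each of $\sgn m, \sgn n, \sgn l$ equals $\pm 1$, so $\sigma$ is a sum of three odd integers, hence odd, hence $|\sigma| \ge 1$. Finally, every $\lambda_j(\ep)$ is purely imaginary for real $\ep$ (each $\lambda_j^{(k)}$ is), so the left-hand side above is $i$ times a real number and, after dividing by $i$, the error term is real with absolute value $\le C'\ep^6$ for the constant $C'$ of Corollary \ref{lambda-remainder} (absorbing a factor $3$).

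I would then split into two cases. If $N \ne 0$, then $|N| \ge 1$, and for $\ep$ small enough that $c_\ep \ge \tfrac12$ and $|c_\ep - 1| \le \ep^2$,
\[
|\lambda_m(\ep)+\lambda_n(\ep)+\lambda_l(\ep)|
\ge |N|\,c_\ep - 3|c_\ep-1| - C'\ep^6
\ge \tfrac12 - 3\ep^2 - C'\ep^6 > \tfrac14 > \tfrac{\ep^2}{5}.
\]
If $N = 0$, only the sign term and the error remain, and $|\sigma| \ge 1$ gives
\[
|\lambda_m(\ep)+\lambda_n(\ep)+\lambda_l(\ep)|
= \bigl|\sigma(c_\ep-1) + O(\ep^6)\bigr|
\ge |c_\ep - 1| - C'\ep^6
\ge \tfrac{\ep^2}{4} - |O(\ep^4)| - C'\ep^6,
\]
and the right-hand side exceeds $\ep^2/5$ once $\ep$ is small enough that $|O(\ep^4)| + C'\ep^6 \le \ep^2/20$. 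This proves the proposition.

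The only real content is the non-degeneracy, and specifically the strict inequality $|c^{(2)}| = \tfrac14 > \tfrac15$: this is what keeps the true additive resonance $m+n+l=0$ from being worse than quadratic in $\ep$, and it encodes the specific shape of the Burgers--Hilbert traveling wave; everything else is bookkeeping against Corollary \ref{lambda-remainder}. I would also note that the exponent $2$ is sharp: with $m=n=1$, $l=-2$ (so $mnl\ne 0$, $N=0$, $\sigma=1$) and $\lambda_{-2}=-\lambda_2$ one gets $2\lambda_1(\ep)+\lambda_{-2}(\ep) = (c_\ep-1)i + O(\ep^6) = -\tfrac{\ep^2}{4}i + O(\ep^4)$, of modulus $\sim \ep^2/4$, so the bound cannot be improved to $o(\ep^2)$ and the constant cannot be pushed much above $1/4$.
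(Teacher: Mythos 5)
Your proof is correct and follows essentially the same route as the paper: the same case split on $m+n+l=0$ versus $m+n+l\neq 0$, the same reliance on $c_\ep = 1-\ep^2/4+O(\ep^3)$ from (\ref{c-ep-def}) and (\ref{uv-Taylor-exp}), and the same observation that $|\sgn m +\sgn n+\sgn l|\ge 1$. The sharpness remark and the explicit bookkeeping of the remainder against Corollary~\ref{lambda-remainder} are useful additions but do not change the substance of the argument.
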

\begin{proof}
By (\ref{c-ep-def}) and (\ref{uv-Taylor-exp}),
\begin{align*}
\frac{c_\ep}{2\pi} &= \left( \int_0^{2\pi} \frac{dy}{u_\ep(y) - v_\ep} \right)^{-1}
= \left( \int_0^{2\pi} \frac{dy}{1 + \ep\cos y - \frac{\ep^2}{2}\cos2y + \frac{\ep^2}{4}} \right)^{-1} + O(\ep^3)\\
&= \left( \int_0^{2\pi} \left( 1 - \ep\cos y + \ep^2\cos^2y + \frac{\ep^2}{2}\cos2y - \frac{\ep^2}{4} \right)dy \right)^{-1} + O(\ep^3),\\
c_\ep &= (1 + \ep^2/4)^{-1} + O(\ep^3) = 1 - \ep^2/4 + O(\ep^3).
\end{align*}

We distinguish three cases.

{\bf Case 1:} $m + n + l \neq 0$. Then $|m + n + l| \ge 1$. Since $c_\ep - 1 \lesssim \ep^2$,
\[
\lambda_m(\ep) + \lambda_n(\ep) + \lambda_l(\ep)
= (m + n + l)c_\ep i + O(\ep^2).
\]
Since $c_\ep \to 1$ as $\ep \to 0$, $|\lambda_m(\ep) + \lambda_n(\ep) + \lambda_l(\ep)| > |m + n + l|/2$ for small $\ep$.

{\bf Case 2:} $m + n + l = 0$ and $mnl \neq 0$. Then
\[
\lambda_m(\ep) + \lambda_n(\ep) + \lambda_l(\ep)
= -\frac{1}{4}(\sgn m + \sgn n + \sgn l)\ep^2i + O(\ep^3).
\]
Since $|\sgn m| = |\sgn n| = |\sgn l| = 1$, we have that $|\sgn m + \sgn n + \sgn l| \ge 1$, so $|\lambda_m(\ep) + \lambda_n(\ep) + \lambda_l(\ep)| > \ep^2/5$ when $\ep$ is small enough.
\end{proof}

When $m + n + l = 0$ and $mnl = 0$, since $\lambda_n(\ep)$ is odd in $n$,
it follows that $\lambda_m(\ep) + \lambda_n(\ep) + \lambda_l(\ep) = 0$.
We do have time resonance in this case.
We will eliminate this case by choosing a new frame of reference.

\section{A new frame of reference}\label{NFR}
Recall that the traveling wave solution
\[
f_\ep(x, t) = u_\ep(x + v_\ep t)
\]
satisfies
\[
\partial_tf_\ep = Hf_\ep + f_\ep\partial_xf_\ep,
\]
i.e.,
\[
v_\ep u_\ep' = Hu_\ep + u_\ep u_\ep'.
\]

Now we aim to find a new reference frame. Let $P_0^\pm(\ep)$ be the projection on the 1 dimensional space spanned by the eigenvector $\varphi_0^+(\ep) = \partial_\ep u_\ep$ and $\varphi_0^-(\ep) = -\ep^{-1}u_\ep'$, respectively. Then we aim to rewrite
\[
f(x, t) = u_{\ep(t)}(x + a(t)) + g(x + a(t), t)
\]
where $\ep$, $a \in \R$ and $P_0^\pm(\ep(t))g = 0$. We first show that it is always possible, provided that $f$ is close to a traveling wave.
\begin{prop}\label{new-frame}
Let $k \ge 2$. Then there is $r = r(k) > 0$ such that if $|\ep_0| < r$ and $\|f - u_{\ep_0}\|_{H^k} < r|\ep_0|$, then there is $\ep \in \R$, $a \in \R/2\pi\Z$ and $g \in H^k$ such that
\begin{align}
\label{f=u+g}
f(x) &= u_\ep(x + a) + g(x + a),\\
\label{P0g=0}
P_0^\pm(\ep)g &= 0,\\
\label{g-L2-bound}
|\ep - \ep_0| + \|g\|_{H^k} &\lesssim \|f - u_{\ep_0}\|_{H^2}.
\end{align}
Moreover, $\ep$, $a$ and $g$ depend smoothly on $f$.
\end{prop}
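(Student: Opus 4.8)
The statement is a quantitative implicit function theorem: given $f$ close to $u_{\ep_0}$, we must solve for $(\ep,a,g)$ with $g$ orthogonal (in the sense of lying in $\ker P_0^\pm(\ep)$) to the generalized null space of $L_\ep$. The natural setup is to define a map whose zero set encodes \eqref{f=u+g} and \eqref{P0g=0} and apply the inverse function theorem at the base point $(\ep_0,a=0,g=f-u_{\ep_0})$. Concretely, I would introduce the map
\[
\Psi(\ep,a) = \bigl(P_0^+(\ep)\,[\,f(\cdot-a)-u_\ep\,],\ P_0^-(\ep)\,[\,f(\cdot-a)-u_\ep\,]\bigr)\in\R^2,
\]
where the two components are the coordinates of the projections in the bases $\{\varphi_0^+(\ep)\},\{\varphi_0^-(\ep)\}$; solving \eqref{P0g=0} amounts to solving $\Psi(\ep,a)=0$, and then $g$ is defined by $g(\cdot+a)=f-u_\ep(\cdot+a)$, i.e. \eqref{f=u+g} holds by construction. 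So the whole proposition reduces to: (i) $\Psi$ is smooth in $(\ep,a,f)$; (ii) at $f=u_{\ep_0}$ the point $(\ep_0,0)$ is a zero with invertible Jacobian $D_{(\ep,a)}\Psi$; (iii) a quantitative version of the implicit function theorem gives a solution near $(\ep_0,0)$ for $f$ in an $H^k$-ball of radius $r|\ep_0|$, with the Lipschitz bound \eqref{g-L2-bound}.

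For step (ii), I would compute the derivative of $\Psi$ at $(\ep_0,0)$ with $f=u_{\ep_0}$. Differentiating $f(\cdot-a)-u_\ep$ in $a$ at $a=0$ gives $-u_{\ep_0}'$, and in $\ep$ gives $-\partial_\ep u_{\ep_0}$; since $\varphi_0^+(\ep_0)=\partial_\ep u_{\ep_0}$ and $\varphi_0^-(\ep_0)=-\ep_0^{-1}u_{\ep_0}'$ span the generalized null space and $P_0^\pm(\ep_0)$ are the associated (not necessarily orthogonal) projections, the $2\times 2$ matrix $D_{(\ep,a)}\Psi(\ep_0,0)$ is, up to the normalization of the projections, $\mathrm{diag}(-1,\ \ep_0)$ plus off-diagonal terms coming from the fact that $P_0^+$ need not annihilate $u_{\ep_0}'$ and $P_0^-$ need not annihilate $\partial_\ep u_{\ep_0}$. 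From Section \ref{lambda=0} we know $L_{\ep_0}$ acts on $\mathrm{span}\{u_{\ep_0}',\partial_\ep u_{\ep_0}\}$ as a nonzero nilpotent Jordan block, so the two vectors are genuinely independent and the spectral projections are well-defined; hence this matrix is invertible, with inverse of size $\lesssim 1/|\ep_0|$ because of the $\ep_0$ in the $a$-column (that factor is exactly why $-\ep^{-1}u_\ep'$ was chosen as the normalization of $\varphi_0^-$, and it is the source of the $r|\ep_0|$ smallness threshold rather than just $r$).

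For step (iii), since $\|D_{(\ep,a)}\Psi(\ep_0,0)^{-1}\|\lesssim 1/|\ep_0|$ and the second derivatives of $\Psi$ in $(\ep,a)$ are $O(1)$ (uniformly for $\ep$ near $0$, using analyticity of $\ep\mapsto u_\ep$ and of the projections $P_0^\pm(\ep)$ established via the Kato-type analysis in Section \ref{LTW}), the standard contraction-mapping proof of the implicit function theorem produces a solution whenever the perturbation $\|f-u_{\ep_0}\|$ times $1/|\ep_0|$ is below an absolute constant — i.e. exactly the hypothesis $\|f-u_{\ep_0}\|_{H^k}<r|\ep_0|$ — and yields $|\ep-\ep_0|+|a|\lesssim \|f-u_{\ep_0}\|_{H^k}/|\ep_0|$, hence $|\ep-\ep_0|\lesssim\|f-u_{\ep_0}\|$. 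Then $g=f(\cdot+a)-u_\ep(\cdot+a)$ obeys, by the triangle inequality and $\|u_\ep-u_{\ep_0}\|_{H^k}\lesssim|\ep-\ep_0|$ together with $\|u_{\ep_0}(\cdot+a)-u_{\ep_0}\|_{H^k}\lesssim|a|\,|\ep_0|\lesssim\|f-u_{\ep_0}\|$, the bound $\|g\|_{H^k}\lesssim\|f-u_{\ep_0}\|_{H^k}$, which is \eqref{g-L2-bound} (the statement writes $H^2$ on the right, and since $H^2\subset H^k$ fails for $k\ge 2$ one reads it as the $H^k$ norm, or uses that only the $H^k$ bound is actually needed downstream). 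Smooth dependence of $(\ep,a,g)$ on $f$ is automatic from the smoothness clause of the implicit function theorem. The main obstacle is step (ii): one must verify that the projections $P_0^\pm(\ep)$ are genuinely transverse to the $(\ep,a)$-variations — equivalently that the nilpotent block is nonzero, i.e. $\partial_\ep v_\ep\neq 0$, which holds because $v_\ep=-1-\ep^2/4+O(\ep^4)$ from \eqref{uv-Taylor-exp} has nonvanishing $\ep$-derivative for small $\ep\neq 0$ — and to track the single power of $\ep_0$ through the Jacobian inverse so that the smallness threshold comes out as $r|\ep_0|$ and not $r$; everything else is routine.
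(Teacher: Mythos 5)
Your proposal is correct and essentially reproduces the paper's proof: the paper defines precisely your map $\Psi$ (called $F$ there), establishes that $dF$ is close to $\mathrm{diag}(\pm1,\mp\ep)$ with error $\lesssim|a\ep_0|+r|\ep_0|+|\ep-\ep_0|$, and solves $F=0$ via a Newton iteration — a concrete realization of your quantitative implicit-function argument — while tracking the single factor of $\ep_0$ from the $a$-column exactly as you describe, and then reading off \eqref{g-L2-bound} by the same triangle inequality. Two small factual corrections that do not affect the argument: at the exact base point $(\ep_0,0)$, $f=u_{\ep_0}$, the Jacobian has no off-diagonal entries, because $P_0^+(\ep)$ is the projection onto $\mathrm{span}\,\varphi_0^+(\ep)$ along $\mathrm{span}\,\varphi_0^-(\ep)\oplus(\text{complementary spectral subspace})$ and therefore annihilates $u_\ep'=-\ep\varphi_0^-(\ep)$, and symmetrically for $P_0^-$; and the invertibility of the Jacobian requires only the linear independence of $\varphi_0^+(\ep)\approx\cos x$ and $\varphi_0^-(\ep)\approx\sin x$, not the nonvanishing of $\partial_\ep v_\ep$, which concerns the nilpotent Jordan action of $L_\ep$ on the generalized kernel and plays no role in this proposition.
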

\begin{proof}
Define the map $F: (-r, r)^2 \to \R^2$, $(\ep, a) \mapsto (y^+, y^-)$ with
\begin{equation}\label{F-def}
P_0^\pm(\ep)(f(x - a) - u_\ep(x)) = y^\pm\varphi_0^\pm(\ep).
\end{equation}
We now find the solution to the equation $F(\ep, a) = 0$.
Since $P_0^\pm(\ep)$ is uniformly bounded in $L^2$ and $\|\varphi_0^\pm(\ep)\|$ is uniformly bounded from below,
\begin{equation}\label{F-bound}
|F(\ep, a)| \lesssim \|f(x - a) - u_\ep\|_{L^2}.
\end{equation}

Summing the two equations in (\ref{F-def}) and taking the total derivative yields
\begin{align}
\nonumber
&-(P_0^+(\ep) + P_0^-(\ep))(f'(x - a))da - \varphi_0^+(\ep)d\ep\\
\label{dF-LHS}
&+ (\partial_\ep P_0^+(\ep) + \partial_\ep P_0^-(\ep))(f(x - a) - u_\ep(x))d\ep\\
&= \varphi_0^+(\ep)dy^+ + \varphi_0^-(\ep)dy^- + y^+\partial_\ep\varphi_0^+(\ep)d\ep + y^-\partial_\ep\varphi_0^-(\ep)d\ep.
\label{dF-RHS}
\end{align}
Since $\|f\|_{H^2} \le \|u_{\ep_0}\|_{H^2} + r|\ep_0| \lesssim |\ep_0|$, we have that
\begin{align}
\nonumber
\|f(x - a) - u_\ep\|_{H^1}
&\le \|f(x - a) - f(x)\|_{H^1} + \|f - u_{\ep_0}\|_{H^1} + \|u_\ep - u_{\ep_0}\|_{H^1}\\
&\lesssim |a\ep_0| + r|\ep_0| + |\ep - \ep_0|.
\label{f-u-H1}
\end{align}
Since both $P_0^\pm(\ep)$ and $\partial_\ep P_0^\pm(\ep)$ are uniformly bounded on $L^2$, and $u_\ep' = -\ep\varphi_0^-(\ep)$,
\[
\|(\ref{dF-LHS}) - \ep\varphi_0^-(\ep)da + \varphi_0^+(\ep)d\ep\|_{L^2} \lesssim (|a\ep_0| + r|\ep_0| + |\ep - \ep_0|)(|da| + |d\ep|).
\]
By (\ref{F-bound}) and (\ref{f-u-H1}),
\[
\|y^\pm\partial_\ep\varphi_0^\pm(\ep)\|_{L^2} \lesssim |F(\ep, a)|
\lesssim |a\ep_0| + r|\ep_0| + |\ep - \ep_0|
\]
so
\[
\|(\ref{dF-RHS}) - \varphi_0^+(\ep)dy^+ - \varphi_0^-(\ep)dy^-\|_{L^2}
\lesssim (|a\ep_0| + r|\ep_0| + |\ep - \ep_0|)|d\ep|.
\]
Hence the equality between (\ref{dF-LHS}) and (\ref{dF-RHS}) gives an estimate of the differential
\[
\left\| dF(\ep, a) -
\begin{pmatrix}
1 & 0\\
0 & -\ep
\end{pmatrix} \right\| \lesssim |a\ep_0| + r|\ep_0| + |\ep - \ep_0|.
\]

We assume that the solution $(\ep, a)$ satisfies $|\ep - \ep_0| + |a\ep_0| < r_0|\ep_0|$, where $r_0$ is small enough. This in particular implies that $|\ep_0|/2 < |\ep| < 2|\ep_0|$. Then
\[
\left\| dF(\ep,a) -
\begin{pmatrix}
1 & 0\\
0 & -\ep_0
\end{pmatrix} \right\| \lesssim (r_0 + r)|\ep_0|
\]
is also small enough. Let
\[
G = \mathbb{I} + \left( dF(\ep, a) -
\begin{pmatrix}
1 & 0\\
0 & -\ep_0
\end{pmatrix} \right)
\begin{pmatrix}
1 & 0\\
0 & -1/\ep_0
\end{pmatrix}.
\]
Then
\[
dF = G
\begin{pmatrix}
1 & 0\\
0 & -\ep_0
\end{pmatrix}
\]
and
\[
\|G - \mathbb{I}\| \lesssim r_0 + r.
\]
If $r_0$ and $r$ are small enough, then $\|G\|$ and $\|G^{-1}\| < 2$.

Let $(\ep_1, a_1) = (\ep_0, 0) - dF(\ep_0, 0)^{-1}F(\ep_0, 0)$. Then (recalling (\ref{F-bound}))
\[
|\ep_1 - \ep_0| + |a_1\ep_0| \lesssim |G^{-1}F(\ep_0, 0)| \lesssim |F(\ep_0, 0)| \lesssim \|f - u_{\ep_0}\|_{L^2} \lesssim r|\ep_0|.
\]
Since $|\partial_\ep^2F|$ and $|\partial_{a\ep}F| \lesssim 1$, and $|\partial_a^2F| \lesssim \|f\|_{H^2} \lesssim |\ep_0|$, by Taylor's theorem,
\[
|F(\ep_1, a_1)| \lesssim |\ep_1 - \ep_0|^2 + |\ep_1 - \ep_0||a_1| + |\ep_0||a_1|^2 \lesssim r|F(\ep_0, 0)|.
\]
Hence the iteration $(\ep_{n+1}, a_{n+1}) = (\ep_n, a_n) + dF(\ep_n, a_n)^{-1}F(\ep_n, a_n)$ converges when $r$ is small enough. Moreover $|\ep_n - \ep_0| + |a_n\ep_0| \lesssim |F(\ep_0, 0)|$.
Then $(\ep, a) := \lim_{n\to\infty} (\ep_n, a_n)$ satisfies $F(\ep, a) = 0$ and $|\ep - \ep_0| + |a\ep_0| \lesssim |F(\ep_0, 0)| \lesssim r|\ep_0| < r_0|\ep_0|$ if $r$ is small compared to $r_0$.

Let $g = f(x - a) - u_\ep$. Then (\ref{f=u+g}) and (\ref{P0g=0}) clearly hold. Moreover,
\begin{align*}
\|g\|_{H^k} &= \|g(x + a)\|_{H^k} = \|f(x) - u_\ep(x + a)\|_{H^k}\\
&\le \|f - u_{\ep_0}\|_{H^k} + \|u_\ep(x + a) - u_{\ep_0}(x)\|_{H^k}\\
&\lesssim \|f - u_{\ep_0}\|_{H^k} + |\ep - \ep_0| + |a\ep_0|\\
&\lesssim \|f - u_{\ep_0}\|_{H^k} + |F(\ep_0, 0)| \lesssim \|f - u_{\ep_0}\|_{H^k}
\end{align*}
showing (\ref{g-L2-bound}). The smooth dependence of $\ep$, $a$ and $g$ on $f$ is also clear.
\end{proof}

By translation symmetry, if $f$ is $r|\ep_0|$-close to $u_{\ep_0}(x + a)$ for some $a \in \R/2\pi\Z$, we can reach a similar conclusion. Then we can write
\[
f(x, t) = u_{\ep(t)}(x + a(t)) + g(x + a(t), t).
\]
We will obtain an energy estimate for $g$.
Combined with local wellposedness of the equation (\ref{BH}) and Proposition \ref{new-frame}, we can show that the solution extends as long as the energy estimate closes, see the end of section \ref{lifespan1}.

To get the energy estimate, we first need to derive an evolution equation for $g$. Since $f$ is differentiable in $t$, so are $\ep(t)$, $a(t)$ and $g$, and we get
\[
f_t(x, t) = a'(t)(u_\ep' + g_x)(x + a(t)) + \ep'(t)\partial_\ep u_\ep(x + a(t)) + g_t(x + a(t), t)
\]
and
\begin{align*}
(Hf + ff_x)(x, t) &= (Hu_\ep + u_\ep u_\ep')(x + a(t)) + Hg(x + a(t), t)\\
&+\partial_x(u_\ep(x + a(t))g(x + a(t), t)) + (gg_x)(x + a(t), t).
\end{align*}
The equation for $g$ is then
\begin{align*}
g_t &= v_\ep u_\ep' - a'(t)(u_\ep' + g_x) - \ep'(t)\partial_\ep u_\ep + Hg + (u_\ep g)_x + gg_x\\
&= L_\ep g + (v_\ep - a'(t))(u_\ep' + g_x) - \ep'(t)\partial_\ep u_\ep + gg_x.
\end{align*}
Since $P_0^\pm(\ep)g(t) = 0$, we have that $P_0^\pm(\ep)g_t = -\ep'(t)\partial_\ep P_0^\pm(\ep)g$, so the action of the projections $P_0^\pm(\ep)$ on the above equation is
\begin{align*}
(v_\ep - a'(t))P_0^+(\ep)g_x + \ep'(t)(\partial_\ep P_0^+(\ep)g - \partial_\ep u_\ep) + P_0^+(\ep)(gg_x) &= 0,\\
(v_\ep - a'(t))(u_\ep' + P_0^-(\ep)g_x) + \ep'(t)\partial_\ep P_0^-(\ep)g + P_0^-(\ep)(gg_x) &= 0.
\end{align*}
Since $P_0^\pm(\ep)$ are bounded on $L^2$, we have that $\|P_0^\pm(\ep)g_x\|_{L^2} \lesssim \|g\|_{H^1}$. Since $P_0^\pm(\ep)$ are analytic in $\ep$, we have that $\|\partial_\ep P_0^\pm(\ep)g\|_{L^2} \lesssim \|g\|_{L^2}$.
Since $P_0^\pm(\ep)$ is a projection, we have that $P_0^\pm(\ep)^2 = P_0^\pm(\ep)$. Taking the derivative in $\ep$ and using the constraint $P_0^\pm(\ep)g = 0$ we have that $P_0^\pm(\ep)\partial_\ep P_0^\pm(\ep)g = \partial_\ep P_0^\pm(\ep)g$, i.e., $\partial_\ep P_0^\pm(\ep)g$ is in the 1-dimensional space spanned by $\varphi_0^\pm(\ep)$. Hence
\begin{align*}
|P_0^\pm(\ep)g_x/\varphi_0^+(\ep)| &\lesssim \|g\|_{H^1}, &
|\partial_\ep P_0^\pm(\ep)g/\varphi_0^\pm(\ep)| &\lesssim \|g\|_{L^2}.
\end{align*}
Thus, dividing the two equations by $\varphi_0^\pm(\ep)$ we get
\[
\left| \left(
\begin{pmatrix}
0 & 1\\
\ep & 0
\end{pmatrix}
+ O(\|g\|_{H^1})
\right)
\begin{pmatrix}
v_\ep - a'(t)\\
\ep'(t)
\end{pmatrix}
\right| = \left|
\begin{pmatrix}
P_0^+(\ep)(gg_x)/\varphi_0^+(\ep)\\
P_0^-(\ep)(gg_x)/\varphi_0^-(\ep)
\end{pmatrix}
\right| \lesssim \|g(t)\|_{H^1}^2.
\]
Assuming $\|g(t)\|_{H^1}/|\ep|$ is small enough we have that
\begin{equation}\label{ep'-a'-bound}
\begin{pmatrix}
v_\ep - a'(t)\\
\ep'(t)
\end{pmatrix}
=
\begin{pmatrix}
O(\|g(t)\|_{H^1}^2/|\ep|)\\
O(\|g(t)\|_{H^1}^2)
\end{pmatrix}
\end{equation}

\subsection{Diagonalization}
To find the evolution of other modes, we diagonalize the equation for $g$.
Let $g = h_x$ and $h = \tilde h \circ \phi_\ep$, where $\phi_\ep$ satisfies (\ref{phi-ep'}). Recall from (\ref{L-ep}) that $L_\ep g = -v_\ep g_x + Hg + (u_\ep g)_x$, so
\[
h_t = -v_\ep h_x + Hh + u_\ep h_x - \ep'(t)\partial_\ep U_\ep + (v_\ep - a'(t))(u_\ep + h_x) + \frac{1}{2}h_x^2 \pmod 1
\]
where $U_\ep$ is a primitive of $u_\ep$. Differentiating $h = \tilde h \circ \phi_\ep$ with respect to $\ep$ we get
\[
h_t = \tilde h_t \circ \phi_\ep + \ep'(t)(\partial_\ep\phi_\ep)(\tilde h_x \circ \phi_\ep).
\]
On the other hand,
\[
(-v_\ep h_x + Hh + u_\ep h_x)_x = L_\ep g
= (((c_\ep\partial_x + H + R_\ep)\tilde h) \circ \phi_\ep)_x
\]
so
\begin{align*}
\tilde h_t &= (c_\ep\partial_x + H + R_\ep)\tilde h - \ep'(t)(\partial_\ep\phi_\ep \circ \phi_\ep^{-1})\tilde h_x - \ep'(t)\partial_\ep U_\ep \circ \phi_\ep^{-1}\\
&+ (v_\ep - a'(t))(u_\ep + h_x) \circ \phi_\ep^{-1} + \frac{1}{2} h_x^2 \circ \phi_\ep^{-1} \pmod 1.
\end{align*}
By the chain rule, $h_x = \phi_\ep'(\tilde h_x \circ \phi_\ep)$, so
$h_x \circ \phi_\ep^{-1} = (\phi_\ep' \circ \phi_\ep^{-1})\tilde h_x$, and
\begin{align*}
\tilde h_t &= (c_\ep\partial_x + H + R_\ep)\tilde h + \Phi_\ep\tilde h_x - \ep'(t)\partial_\ep U_\ep \circ \phi_\ep^{-1} + (v_\ep - a'(t))u_\ep \circ \phi_\ep^{-1}\\
&+ \frac{1}{2} (\phi_\ep' \circ \phi_\ep^{-1})^2\tilde h_x^2 \pmod 1,\\
\Phi_\ep &= -\ep'(t)(\partial_\ep\phi_\ep \circ \phi_\ep^{-1})
+ (v_\ep - a'(t))(\phi_\ep' \circ \phi_\ep^{-1}).
\end{align*}

Using the operator $W_\ep$ from lemma \ref{conjugado} we have that
\[
(1 + W_\ep)(c_\ep\partial_x + H + R_\ep) = \Lambda_\ep(1 + W_\ep)
\]
where $\Lambda_\ep$ is a Fourier multiplier whose action on the Fourier mode $e^{i(n + \sgn n)x}$ is multiplication by $\lambda_n(\ep)$.
Since $W_\ep/\ep$ is of class $\mathcal S$, uniformly in $\ep$,
for any smooth function $F$, the operator
\[
\tilde h \mapsto \mathcal R_\ep(F)\tilde h
:= (1 + W_\ep)(F\tilde h_x) - F((1 + W_\ep)\tilde h)_x
\]
is of class $\mathcal S$, with the implicit constants depending on the $C^k$ norms of $F$.

Let $\frak h = (1 + W_\ep)\tilde h$. Then
\begin{align*}
(1 + W_\ep)\tilde h_t &= \Lambda_\ep\frak h + \Phi_\ep\frak h_x
- \ep'(t)(1 + W_\ep)(\partial_\ep U_\ep \circ \phi_\ep^{-1})\\
&+ (v_\ep - a'(t))(1 + W_\ep)(u_\ep \circ \phi_\ep^{-1}) + N_\ep[\frak h, \frak h] + \mathcal R_\ep(\Phi_\ep)\tilde h \pmod 1
\end{align*}
where
\begin{equation}\label{N-def}
N_\ep[\frak h, \frak h] = \frac{1}{2} (\phi_\ep' \circ \phi_\ep^{-1})^2((1 + W_\ep)^{-1}\frak h)_x^2.
\end{equation}
Both $\mathcal R_\ep(\partial_\ep\phi_\ep \circ \phi_\ep^{-1})$ and $\mathcal R_\ep(\phi_\ep' \circ \phi_\ep^{-1} - 1)/\ep$ are of class $\mathcal S$, uniformly in $\ep$ when $\ep$ is small. Moreover, since $W_\ep$ is analytic in $\ep$ with $W_0 = 0$, so is $\mathcal R_\ep(1)$ with $\mathcal R_0(1) = 0$. Hence $R_\ep(1)/\ep$ is of class $\mathcal S$ uniformly in $\ep$, and so is $\mathcal R_\ep(\phi_\ep' \circ \phi_\ep^{-1})/\ep$.

Since $\partial_\ep u_\ep$ and $u_\ep'$ are in the generalized eigenspace of $L_\ep$ associated with the eigenvalue 0, we have that $\partial_\ep U_\ep \circ \phi_\ep^{-1}$ and $u_\ep \circ \phi_\ep^{-1}$ are in the corresponding space of $c_\ep\partial_x + H + R_\ep$, so $(1 + W_\ep)(\partial_\ep U_\ep \circ \phi_\ep^{-1})$ and $(1 + W_\ep)(u_\ep \circ \phi_\ep^{-1})$ are in the space spanned by $\sin x$ and $\cos x$, according to Lemma \ref{conjugado}.

Now we have
\begin{equation}\label{frak-h-t}
\begin{aligned}
\frak h_t &= (1 + W_\ep)\tilde h_t + \ep'(t)\partial_\ep W_\ep \tilde h\\
&= \Lambda_\ep\frak h + \Phi_\ep\frak h_x + N_\ep[\frak h, \frak h] + \text{Rest}
\pmod{1, \sin x, \cos x}
\end{aligned}
\end{equation}
where $N_\ep[\frak h, \frak h]$ is given by (\ref{N-def}) and
\[
\text{Rest} = \ep'(t)(\partial_\ep W_\ep)\tilde h + \mathcal R_\ep(\Phi_\ep)\tilde h
\]
is also of class $\mathcal S$ uniformly in $\ep$ when $\ep$ is small.

Recall that $\ep'(t)$ and $a'(t)$ are chosen such that $P_0(\ep) g(t) = 0$ for all $t$, where $P_0(\ep)$ is the projection onto the span of $\partial_\ep u_\ep$ and $u_\ep'$. This implies that $Q_0(\ep) \tilde h(t) = 0$ for all $t$, where $Q_0(\ep)$ is the projection onto the span of $\partial_\ep U_\ep \circ \phi_\ep^{-1}$ and $u_\ep \circ \phi_\ep^{-1}$. Since $1 + W_\ep$ maps the span of $\partial_\ep U_\ep \circ \phi_\ep^{-1}$ and $u_\ep \circ \phi_\ep^{-1}$ to the span of $\sin x$ and $\cos x$, we have that $\hat{\frak h}(1) = \hat{\frak h}(-1) = 0$ for all $t$.

\section{Energy estimates}\label{EE}
Since $\hat{\frak h}(1) = \hat{\frak h}(-1) = 0$ for all $t$,
for $k = 0, 1, \dots$  we define the energy
\[
E_k = \frac{1}{2}\|\frak h\|^2_{\dot H^k}
= \frac{1}{2}\|\frak h\|^2_{H^k/(1, \sin x, \cos x)}
\]
and aim to control its growth.

Using the evolution equation (\ref{frak-h-t}) for $\frak h$ and the anti-selfadjointness of $\Lambda_\ep$ we get
\begin{align*}
\frac{d}{dt}E_k(t) &= E_\Phi(t) + E_N(t) + E_{\text{Rest}}(t),\\
E_\Phi(t) &= \langle \Phi_\ep\frak h_x, \frak h \rangle_{\dot H^k}\\
E_N(t) &= \langle N_\ep[\frak h(t), \frak h(t)], \frak h(t) \rangle_{\dot H^k},\\
E_{\text{Rest}}(t) &= \langle \ep'(t)\partial_\ep W_\ep\tilde h(t)
+ \mathcal R_\ep(\Phi_\ep)\tilde h(t), \frak h(t) \rangle_{\dot H^k}.
\end{align*}

Recall that $g = h_x$, $h = \tilde h \circ \phi_\ep$ and $\frak h = (1 + W_\ep)\tilde h$. When $\ep$ is small enough, the last two are bounded operators with bounded inverse between $\dot H^k$, $k = 0, 1, \dots$, so
\begin{equation}\label{gHk-hHk}
\|g\|_{H^k} \approx_k \|h\|_{\dot H^{k+1}} \approx_k \|\tilde h\|_{\dot H^{k+1}} \approx_k \|\frak h\|_{\dot H^{k+1}}.
\end{equation}
Since $\mathcal R_\ep(\partial_\ep\phi_\ep \circ \phi_\ep^{-1})$, $\mathcal R_\ep(\phi_\ep' \circ \phi_\ep^{-1})/\ep$ and $\partial_\ep W_\ep$ are of class $\mathcal S$ uniformly in $\ep$,
\begin{align*}
\|(\ep'(t)\partial_\ep W_\ep - \ep'(t)\mathcal R_\ep(\partial_\ep\phi_\ep \circ \phi_\ep^{-1}))\tilde h(t)\|_{\dot H^k}
\lesssim_k \|g(t)\|_{H^1}^2\|\tilde h(t)\|_{\dot H^1}
&\lesssim_k E_2(t)^{3/2},\\
\|(v_\ep - a'(t))\mathcal R_\ep(\phi_\ep' \circ \phi_\ep^{-1})\tilde h(t)\|_{\dot H^k}
\lesssim_k (\|g(t)\|_{H^1}^2/\ep)\ep\|\tilde h(t)\|_{\dot H^1}
&\lesssim_k E_2(t)^{3/2}
\end{align*}
so
\begin{equation}\label{ER-bound}
|E_{\text{Rest}}(t)| \lesssim_k E_2(t)^{3/2}E_k(t)^{1/2}.
\end{equation}

To bound $E_\Phi$ we use (\ref{ep'-a'-bound}) and (\ref{gHk-hHk}) to get
\[
\|\Phi_\ep'\|_{C^k} \lesssim_k \|g(t)\|_{H^1}^2 + (\|g(t)\|_{H^1}^2/|\ep|)|\ep| \lesssim_k E_2(t).
\]
Since $E_\Phi$ loses only one derivative in $\frak h$, we have that
\begin{equation}\label{Phi-Phi0}
|E_\Phi(t) - \langle \Phi_\ep\partial_x^{k+1}\frak h(t), \partial_x^k\frak h(t) \rangle_{L^2/(1)}| \lesssim_k E_2(t)E_k(t).
\end{equation}
For the sake of bounding this term, since the inner product is taken in the space $L^2/(1)$, we can without loss of generality assume that $\hat{\frak h}(0) = 0$ (which is not true in general) and integrate by parts to get
\[
2\langle \Phi_\ep\partial_x^{k+1}\frak h(t), \partial_x^k\frak h(t) \rangle_{L^2/(1)} = \int_0^{2\pi} \Phi_\ep\partial_x(\partial_x^k \frak h(t))^2dx = -\int_0^{2\pi} \Phi_\ep'(\partial_x^k \frak h(t))^2dx
\]
so again by (\ref{ep'-a'-bound}) and (\ref{gHk-hHk}),
\begin{equation}\label{Phi0-bound}
|E_\Phi(t)| \lesssim_k E_2(t)E_k(t).
\end{equation}


Combining (\ref{ER-bound}), (\ref{Phi-Phi0}) and (\ref{Phi0-bound}) shows that
\begin{equation}\label{Ek-EN-bound}
\left| \frac{d}{dt}E_k(t) - E_N(t) \right| \lesssim_k E_2(t)E_k(t).
\end{equation}

\subsection{Normal form transformation}
To bound $E_N$ we recall the expression of $N_\ep$ from (\ref{N-def}).
Since $N_\ep$ does not depend on the constant mode of $\frak h$,
we can also assume without loss of generality that $\hat{\frak h}(0) = 0$.
We further decompose
\begin{equation}\label{EN12}
\begin{aligned}
E_N(t) &= E_{N1}(t) + E_{N2}(t),\\
E_{N1}(t) &= \frac{1}{2}\int_0^{2\pi} \partial_x^k\frak h(t)
\partial_x^k(\partial_x\frak h(t))^2dx\\
&= \sum_{j=2}^{[k/2]+1} c_{kj}\int_0^{2\pi} \partial_x^k\frak h(t)
\partial_x^{k+2-j}\frak h(t)\partial_x^j\frak h(t)
\end{aligned}
\end{equation}
where $c_{kj} \in \R$ are constants and we integrated by parts to get rid of the terms with $k + 1$ derivatives falling on a single factor of $\frak h$.

We use the normal form transformation to bound them. Define the trilinear map
\begin{align*}
D_\ep[f_1, f_2, f_3]
&= \sum_{mnl\neq0} \frac{1}{\lambda_m(\ep) + \lambda_n(\ep) + \lambda_l(\ep)}
\int_0^{2\pi} \hat f_1(m + \sgn m)e^{i(m + \sgn m)x}\\
&\times \hat f_2(n + \sgn n)e^{i(n + \sgn n)x}
\hat f_3(l + \sgn l)e^{i(l + \sgn l)x}dx
\end{align*}
and put
\[
D_{1,k,j}(t) = D_{\ep(t)}[\partial_x^k\frak h(t), \partial_x^{k+2-j}\frak h(t), \partial_x^j\frak h(t)].
\]
Then
\begin{align*}
\frac{d}{dt}D_{1,k,j}(t)
&= \ep'(t)(\partial_\ep D_\ep)[\partial_x^k\frak h(t), \partial_x^{k+2-j}\frak h(t), \partial_x^j\frak h(t)]\\
&+ D_\ep[\partial_x^k\partial_t\frak h(t), \partial_x^{k+2-j}\frak h(t), \partial_x^j\frak h(t)]\\
&+ D_\ep[\partial_x^k\frak h(t), \partial_x^{k+2-j}\partial_t\frak h(t), \partial_x^j\frak h(t)]\\
&+ D_\ep[\partial_x^k\frak h(t), \partial_x^{k+2-j}\frak h(t), \partial_x^j\partial_t\frak h(t)].
\end{align*}
Note that $E_{N1}(t)$ is a linear combination of the last three lines on the right-hand side, with $\partial_t$ replaced with $\Lambda_\ep$,
so $\frac{d}{dt}\sum_{j=2}^{[k/2]+1} c_{jk}D_{1,k,j}(t) - E_{N1}(t)$
is a linear combination of
\begin{align}
\label{IBP0}
&\ep'(t)(\partial_\ep D_\ep)[\partial_x^k\frak h(t), \partial_x^{k+2-j}\frak h(t), \partial_x^j\frak h(t)],\\
\label{IBP1}
&D_\ep[\partial_x^k(\partial_t - \Lambda_\ep)\frak h(t), \partial_x^{k+2-j}\frak h(t), \partial_x^j\frak h(t)],\\
\label{IBP2}
&D_\ep[\partial_x^k\frak h(t), \partial_x^{k+2-j}(\partial_t - \Lambda_\ep)\frak h(t), \partial_x^j\frak h(t)],\\
\label{IBP3}
&D_\ep[\partial_x^k\frak h(t), \partial_x^{k+2-j}\frak h(t), \partial_x^j(\partial_t - \Lambda_\ep)\frak h(t)].
\end{align}
We estimate these terms one by one.

By the definition of $D_\ep$,
\begin{align*}
(\ref{IBP0}) &= \ep'(t)\sum_{mnl\neq0} \frac{(\lambda_m'(\ep) + \lambda_n'(\ep) + \lambda_l'(\ep))}{2(\lambda_m(\ep) + \lambda_n(\ep) + \lambda_l(\ep))^2}
\int_0^{2\pi} \hat{\frak h}(m + \sgn m, t)\partial_x^ke^{i(m + \sgn m)x}\\
&\times \hat{\frak h}(n + \sgn n, t)\partial_x^{k+2-j}e^{i(n + \sgn n)x}
\hat{\frak h}(l + \sgn l, t)\partial_x^je^{i(l + \sgn l)x}dx.
\end{align*}
We first bound the fraction. By Corollary \ref{lambda-remainder},
when $\ep$ is small enough,
\begin{align}
\nonumber
\lambda_m'(\ep) + \lambda_n'(\ep) + \lambda_l'(\ep)
&= (m + n + l + \sgn m + \sgn n + \sgn l)\partial_\ep c_\ep i + O(\ep^5)\\
&\lesssim (|m + n + l| + 1)|\ep|.
\label{sum-lambda'}
\end{align}
On the other hand, the integral vanishes unless
\begin{equation}\label{3freq}
m + n + l + \sgn m + \sgn n + \sgn l = 0
\end{equation}
in which case $m + n + l$ is an odd number, so is non-zero. Then by Case 1 of Proposition \ref{non-res},
\begin{equation}\label{sum-lambda-non-res}
|\lambda_m(\ep) + \lambda_n(\ep) + \lambda_l(\ep)| > |m + n + l|/2
\end{equation}
so
\begin{equation}\label{coef-non-res}
\frac{|\lambda_m'(\ep) + \lambda_n'(\ep) + \lambda_l'(\ep)|}
{|\lambda_m(\ep) + \lambda_n(\ep) + \lambda_l(\ep)|^2}
\lesssim |\ep|.
\end{equation}
Then for $k \ge 3$,
\begin{equation}\label{trilinear-bound}
\begin{aligned}
|(\ref{IBP0})|
&\lesssim |\ep(t)\ep'(t)|\sum_{mnl\neq0\atop(\ref{3freq})} |(m + \sgn m)^k\hat{\frak h}(m + \sgn m, t)\\
&\times (n + \sgn n)^{k+2-j}\hat{\frak h}(n + \sgn n, t)(l + \sgn l)^j\hat{\frak h}(l + \sgn l, t)|\\
&\approx |\ep(t)\ep'(t)|\left| \int_0^{2\pi} \partial_x^kH(x, t)\partial_x^{k+2-j}H(x, t)\partial_x^jH(x, t) \right|dx\\
&\lesssim_k |\ep(t)\ep'(t)|\|H(x, t)\|_{H_x^k}^2\|H(x, t)\|_{W_x^{[k/2]+1,\infty}}
\lesssim_k |\ep(t)\ep'(t)|\|H(x, t)\|_{H_x^k}^3
\end{aligned}
\end{equation}
since $k \ge [k/2] + 2$, where
\[
H(x, t) = \sum_{m\neq0} |\hat{\frak h}(m + \sgn m, t)|e^{i(m + \sgn m)x}
\]
satisfies
\[
\|H(x, t)\|_{H_x^k} = \|\frak h(t)\|_{\dot H^k} \lesssim E_k(t)^{1/2}
\]
so by (\ref{ep'-a'-bound}) and (\ref{gHk-hHk}),
\begin{equation}\label{IBP0-bound}
|(\ref{IBP0})| \lesssim_k |\ep|E_2(t)E_k(t)^{3/2}.
\end{equation}

To bound the other terms \eqref{IBP1}, \eqref{IBP2} and \eqref{IBP3}, we use the evolution equation (\ref{frak-h-t}) of $\frak h$, which loses one derivative in $\frak h$, so
\[
\|(\partial_t - \Lambda_\ep)\frak h(t)\|_{\dot H^{k-1}}
\lesssim (\|g\|_{H^1}^2/|\ep|)\|\frak h(t)\|_{\dot H^k} + \|\frak h(t)\|_{\dot H^k}^2.
\]
If $\|g(t)\|_{H^1}/|\ep|$ is small enough and $k \ge 2$,
the first term is dominated by the second term thanks to (\ref{gHk-hHk}).
Since in the summation of $D_{\ep}$ it holds that $m + n + l \neq 0$,
the denominator is uniformly bounded from below thanks to (\ref{sum-lambda-non-res}). Unless $j = 2$ in (\ref{IBP1}) and (\ref{IBP2}),
we can integrate by parts if necessary to ensure that at most $k - 1$ derivatives in $x$ hit each factor of $j$. Then similarly to (\ref{trilinear-bound}) it follows that for $k \ge 5$,
\begin{equation}\label{IBP3-bound}
|(\ref{IBP1}, j \ge 3) + (\ref{IBP2}, j \ge 3) + (\ref{IBP3})| \lesssim_k E_k(t)^2.
\end{equation}
For $j = 2$, by symmetry of $D_{\ep}$ it is clear that
\begin{equation}\label{IBP2-bound}
(\ref{IBP2}, j = 2) = (\ref{IBP1}, j = 2)
\end{equation}
which according to (\ref{frak-h-t}) equals
\[
D_\ep[\partial_x^k(\Phi_\ep\frak h_x(t) + N_\ep[\frak h(t), \frak h(t)] + R(t)), \partial_x^k\frak h(t), \partial_x^2\frak h(t)].
\]
Similarly to (\ref{trilinear-bound}),
\begin{equation}\label{IBP1R-bound}
|D_\ep[\partial_x^kR(t), \partial_x^k\frak h(t), \partial_x^2\frak h(t)]|
\lesssim_k E_3(t)^2E_k(t)^{1/2}.
\end{equation}
Similarly to (\ref{Phi-Phi0}),
\begin{equation}\label{IBP1Phi0}
|D_\ep[\partial_x^k(\Phi_\ep\frak h_x(t)) - \Phi_\ep\partial_x^{k+1}\frak h(t), \partial_x^k\frak h(t), \partial_x^2\frak h(t)]|
\lesssim_k E_3(t)^{3/2}E_k(t).
\end{equation}
By definition of $D_{\ep}$,
\begin{equation}\label{IBP1Phi}
\begin{aligned}
&D_\ep[\Phi_\ep\partial_x^{k+1}\frak h_x(t), \partial_x^k\frak h(t), \partial_x^2\frak h(t)]
= \sum_{mm'nl\neq0} \frac{1}{\lambda_{m'}(\ep) + \lambda_n(\ep) + \lambda_l(\ep)}\\
\times &\int_0^{2\pi} \hat\Phi_\ep(p)e^{ipx}\hat{\frak h}(m + \sgn m, t)\partial_x^{k+1}e^{i(m + \sgn m)x}\\
\times &\hat{\frak h}(n + \sgn n, t)\partial_x^ke^{i(n + \sgn n)x}
\hat{\frak h}(l + \sgn l, t)\partial_x^2e^{i(l + \sgn l)x}dx
\end{aligned}
\end{equation}
where $m' + \sgn m' = p + m + \sgn m \neq 0, \pm1$.
We break the summation into several parts.

{\bf Part 1:} $|p| \ge |m + \sgn m|/3$.
Then we can transfer the extra derivative from $\frak h$ to $\Phi_\ep$,
and compute as in (\ref{Phi-Phi0}) to get
\begin{equation}\label{IBP1Phi1}
|\textbf{Part 1}| \lesssim_k E_3(t)^{3/2}E_k(t).
\end{equation}

{\bf Part 2:} $|p| < |m + \sgn m|/3$ but $|p| \ge |n + \sgn n|/3$.
If $|n + \sgn n| \ge |m|/3$ then $|p| \ge |m|/9$, and we get the same bound as before. Otherwise, since the integral vanishes unless
\begin{equation}\label{4freq}
p + m + n + l + \sgn m + \sgn n + \sgn l = 0
\end{equation}
in which case we have $|l + \sgn l| > |n + \sgn n|/3$, and we can transfer the extra derivative to the factor $\partial_x^2\frak h$ to get (note that $\|\Phi_\ep\|_{C^k} \lesssim_k \|g\|_{H^1}^2/|\ep|$)
\begin{equation}\label{IBP1Phi2}
|\textbf{Part 2}| \lesssim_k (\|g(t)\|_{H^1}^2/|\ep|)E_4(t)^{1/2}E_k(t)
\lesssim E_4(t)E_k(t)
\end{equation}
provided that $\|g(t)\|_{H^1}/|\ep|$ is small enough.

{\bf Part 3:} $|p| < |m + \sgn m|/3$ and $|p| < |n + \sgn n|/3$.
Then $\sgn(m' + \sgn m') = \sgn(m + \sgn m)$, i.e., $\sgn m' = \sgn m$,
so $m' = m + p$. By symmetry,
\begin{align*}
&\textbf{Part 3}\\
= &\sum_{mnl\neq0\atop{|p|<|m + \sgn m|/3 \atop |p|<|n + \sgn n|/3}}
\frac{1/2}{\lambda_{m+p}(\ep) + \lambda_n(\ep) + \lambda_l(\ep)}
\int_0^{2\pi} \hat{\frak h}(l + \sgn l, t)\partial_x^2e^{i(l + \sgn l)x}\\
\times &\hat\Phi_\ep(p)e^{ipx}\hat{\frak h}(m + \sgn m, t)\partial_x^{k+1}e^{i(m + \sgn m)x}\hat{\frak h}(n + \sgn n, t)\partial_x^ke^{i(n + \sgn n)x}dx\\
+ &\sum_{mnl\neq0\atop{|p|<|m + \sgn m|/3 \atop |p|<|n + \sgn n|/3}}
\frac{1/2}{\lambda_m(\ep) + \lambda_{n+p}(\ep) + \lambda_l(\ep)}
\int_0^{2\pi} \hat{\frak h}(l + \sgn l, t)\partial_x^2e^{i(l + \sgn l)x}\\
\times &\hat\Phi_\ep(p)e^{ipx}\hat{\frak h}(m + \sgn m, t)\partial_x^ke^{i(m + \sgn m)x}\hat{\frak h}(n + \sgn n, t)\partial_x^{k+1}e^{i(n + \sgn n)x}dx.
\end{align*}
Note that the two denominators are uniformly bounded from below. Also,
$\sgn(m + p) = \sgn m$ and $|m + p| > (2|m| - 1)/3$, and similarly for $l$.
Then by Corollary \ref{lambda-remainder}, the two denominators differ by $O(|m|\ep^{4|m|/3+3} + |n|\ep^{4|n|/3+3})$, so
\begin{align*}
&\textbf{Part 3}\\
= &\sum_{mnl\neq0\atop{|p|<|m + \sgn m|/3 \atop |p|<|n + \sgn n|/3}}
\frac{1/2}{\lambda_{m+p}(\ep) + \lambda_n(\ep) + \lambda_l(\ep)}
\int_0^{2\pi} \hat{\frak h}(l + \sgn l, t)\partial_x^2e^{i(l + \sgn l)x}\\
\times &\hat\Phi_\ep(p)e^{ipx}\partial_x(\hat{\frak h}(m + \sgn m, t)\partial_x^ke^{i(m + \sgn m)x}\hat{\frak h}(n + \sgn n, t)\partial_x^ke^{i(n + \sgn n)x})dx\\
+ &\sum_{(\ref{4freq})} O(|m|\ep^{4|m|/3+3} + |n|\ep^{4|n|/3+3})
\int_0^{2\pi} |(l + \sgn l)^2\hat{\frak h}(l + \sgn l, t)\\
\times &\hat\Phi_\ep(p)(m + \sgn m)^k\hat{\frak h}(m + \sgn m, t)(n + \sgn n)^{k+1}\hat{\frak h}(n + \sgn n, t)|\\
= &\sum_{mnl\neq0\atop{|p|<|m + \sgn m|/3 \atop |p|<|n + \sgn n|/3}}
\frac{-1/2}{\lambda_{m+p}(\ep) + \lambda_n(\ep) + \lambda_l(\ep)}
\int_0^{2\pi} \partial_x(\hat{\frak h}(l + \sgn l, t)\partial_x^2e^{i(l + \sgn l)x}\\
\times &\hat\Phi_\ep(p)e^{ipx})\hat{\frak h}(m + \sgn m, t)\partial_x^ke^{i(m + \sgn m)x}\hat{\frak h}(n + \sgn n, t)\partial_x^ke^{i(n + \sgn n)x}dx\\
+ &\sum_{(\ref{4freq})} O(\ep^4)
\int_0^{2\pi} |(l + \sgn l)^2\hat{\frak h}(l + \sgn l, t)\\
\times &\hat\Phi_\ep(p)(m + \sgn m)^k\hat{\frak h}(m + \sgn m, t)(n + \sgn n)^k\hat{\frak h}(n + \sgn n, t)|
\end{align*}
where we integrated by parts in the first integral and used the bounds $|m|\ep^{4|m|/3+3}$ and $|n(n + \sgn n)\ep^{4|n|/3+3}|\lesssim\ep^4$ in the second. Then as in (\ref{trilinear-bound}) it follows that
\begin{equation}\label{IBP1Phi3}
\begin{aligned}
|\textbf{Part 3}|
&\lesssim_k (\|g(t)\|_{H^1}^2/|\ep|)E_4(t)^{1/2}E_k(t)
+ \ep^4(\|g(t)\|_{H^1}^2/|\ep|)E_3(t)^{1/2}E_k(t)\\
&\lesssim E_4(t)E_k(t)
\end{aligned}
\end{equation}
provided that $\ep$ and $\|g(t)\|_{H^1}/|\ep|$ are small enough.

Combining (\ref{IBP1Phi0}), (\ref{IBP1Phi1}), (\ref{IBP1Phi2}) and (\ref{IBP1Phi3}) shows that
\begin{equation}\label{IBP1Phi-bound}
|D_\ep[\partial_x^k(\Phi_\ep\frak h_x(t)), \partial_x^k\frak h(t), \partial_x^2\frak h(t)]|
\lesssim_k E_4(t)(1 + E_4(t)^{1/2})E_k(t)
\end{equation}
provided that $\ep$ and $\|g(t)\|_{H^1}/|\ep|$ are small enough.

We now turn to $D_{\ep}[\partial_x^kN_\ep[\frak h(t), \frak h(t)], \partial_x^k\frak h(t), \partial_x^2\frak h(t)]$. Similarly to (\ref{Phi-Phi0}),
\begin{equation}\label{IBP1N0-bound}
\begin{aligned}
|&D_\ep[\partial_x^kN_\ep[\frak h(t), \frak h(t)], \partial_x^k\frak h(t), \partial_x^2\frak h(t)]\\
- &D_\ep[(\phi_\ep' \circ \phi_\ep^{-1})^2\partial_x((1 + W_\ep)^{-1}\frak h(t))(\partial_x^{k+1}(1 + W_\ep)^{-1}\frak h(t)), \partial_x^k\frak h(t), \partial_x^2\frak h(t)]|\\
\lesssim_k &|\ep|E_3(t)^{1/2}E_k(t)^{3/2}.
\end{aligned}
\end{equation}
Since $W_\ep/\ep$ is of class $\mathcal S$ uniformly in $\ep$,
so is $((1 + W_\ep)^{-1} - 1)/\ep$, so
\begin{equation}\label{IBP1N1-bound}
\begin{aligned}
&|D_\ep[(\phi_\ep' \circ \phi_\ep^{-1})^2(\partial_x(1 + W_\ep)^{-1}\frak h(t))(\partial_x^{k+1}((1 + W_\ep)^{-1} - 1)\frak h(t)), \partial_x^k\frak h(t), \partial_x^2\frak h(t)]|\\
&\lesssim_k |\ep|E_3(t)^{3/2}E_k(t)^{1/2}.
\end{aligned}
\end{equation}
Finally, $D_\ep[(\phi_\ep' \circ \phi_\ep^{-1})^2(\partial_x(1 + W_\ep)^{-1}\frak h(t))(\partial_x^{k+1}\frak h(t)), \partial_x^k\frak h(t), \partial_x^2\frak h(t)]$ is of the same form as the left-hand side of (\ref{IBP1Phi}), so we trace the same argument to get
\begin{align*}
|\textbf{Part 1}| &\lesssim_k E_3(t)E_k(t),\\
|\textbf{Part 2}| &\lesssim_k E_4(t)E_k(t),\\
|\textbf{Part 3}| &\lesssim_k E_4(t)E_k(t) + \ep^4E_3(t)E_k(t)
\lesssim E_4(t)E_k(t)
\end{align*}
provided that $\ep$ is small enough. Hence
\begin{equation}\label{IBP1N2-bound}
|D_\ep[(\phi_\ep' \circ \phi_\ep^{-1})^2(\partial_x(1 + W_\ep)^{-1}\frak h(t))(\partial_x^{k+1}\frak h(t)), \partial_x^k\frak h(t), \partial_x^2\frak h(t)]| \lesssim_k E_4(t)E_k(t)
\end{equation}

Combining (\ref{IBP1N0-bound}), (\ref{IBP1N1-bound}) and (\ref{IBP1N2-bound}) shows that, for $k \ge 4$,
\begin{equation}\label{IBP1N-bound}
|D_\ep[\partial_x^kN_\ep[\frak h(t), \frak h(t)], \partial_x^k\frak h(t), \partial_x^2\frak h(t)]| \lesssim_k E_4(t)^{1/2}E_k(t)^{3/2}
\end{equation}
provided that $\ep$ is small enough.

Combining (\ref{IBP1R-bound}), (\ref{IBP1Phi-bound}) and (\ref{IBP1N-bound}) shows that, for $k \ge 4$,
\begin{equation}\label{IBP1-bound}
|(\ref{IBP1}, j = 2)|\lesssim_k E_4(t)^{1/2}(1 + E_4(t)^{1/2})E_k(t)^{3/2}.
\end{equation}
provided that $\ep$ and $\|g(t)\|_{H^1}/|\ep|$ are small enough.

Finally, combining (\ref{IBP0-bound}), (\ref{IBP3-bound}), (\ref{IBP2-bound}) and (\ref{IBP1-bound}) shows that, for $k \ge 5$,
\begin{equation}\label{EN1-bound}
\left| \frac{d}{dt}\sum_{j=2}^{[k/2]+1} c_{jk}D_{1,k,j}(t) - E_{N1}(t)\right| \lesssim_k (1 + E_4(t)^{1/2})E_k(t)^2
\end{equation}
provided that $\ep$ and $\|g(t)\|_{H^1}/|\ep|$ are small enough.

\subsection{Lifespan when $\delta \ll \ep$}\label{lifespan1}
In this section we will obtain a preliminary bound for $E_{N2} = E_N - E_{N1}$ and show a lifespan of $\frac{1}{\ep \delta}$ when $||g_0||_{H^5(\T)}=\delta\ll\ep$, i.e., $\delta\leq c\ep$ for some $c>0$ independent of $\ep$.

Recall from (\ref{EN12}) that
\[
E_N(t) = \frac{1}{2}\int_0^{2\pi} \partial_x^k\frak h(t)\partial_x^k((\phi_\ep' \circ \phi_\ep^{-1})^2((1 + W_\ep)^{-1}\frak h(t))_x^2)dx.
\]
Similarly to (\ref{Phi-Phi0}), for $k \ge 3$,
\[
\left| E_N(t) - \int_0^{2\pi} (\phi_\ep' \circ \phi_\ep^{-1})^2\partial_x^k\frak h(t)\partial_x^k(((1 + W_\ep)^{-1}\frak h_x(t))^2)dx \right|
\lesssim_k |\ep|E_k(t)^{3/2}.
\]
Since $((1 + W_\ep)^{-1} - 1)/\ep$ is of class $\mathcal S$ uniformly in $\ep$,
\[
\begin{aligned}
\left| \int_0^{2\pi} (\phi_\ep' \circ \phi_\ep^{-1})^2\partial_x^k\frak h(t)\partial_x^k(((1 + W_\ep)^{-1}\frak h_x(t) - \frak h_x(t))^2)dx \right|&\lesssim_k \ep^2E_k(t)^{3/2},\\
2\left| \int_0^{2\pi} (\phi_\ep' \circ \phi_\ep^{-1})^2\partial_x^k\frak h(t)\partial_x^k(((1 + W_\ep)^{-1}\frak h_x(t) - \frak h_x(t))\frak h_x(t))dx \right.&\\
- \left. \int_0^{2\pi} (\phi_\ep' \circ \phi_\ep^{-1})^2((1 + W_\ep)^{-1}\frak h_x(t) - \frak h_x(t))\partial_x^k\frak h(t)\partial_x^{k+1}\frak h(t)dx \right| &\lesssim_k |\ep|E_k(t)^{3/2}.
\end{aligned}
\]
Finally, by integration by parts,
\[
\begin{aligned}
\left| 2\int_0^{2\pi} (\phi_\ep' \circ \phi_\ep^{-1})^2((1 + W_\ep)^{-1}\frak h_x(t) - \frak h_x(t))\partial_x^k\frak h(t)\partial_x^{k+1}\frak h(t)dx \right|&\\
= \left| \int_0^{2\pi} \partial_x((\phi_\ep' \circ \phi_\ep^{-1})^2((1 + W_\ep)^{-1}\frak h_x(t) - \frak h_x(t)))(\partial_x^k\frak h(t))^2dx \right| &\lesssim_k |\ep|E_k(t)^{3/2}.
\end{aligned}
\]
Combining the bounds above shows that, for $k \ge 3$,
\begin{equation}\label{EN2-bound}
|E_{N_2}(t)| = |E_N(t) - E_{N_1}(t)| \lesssim_k |\ep|E_k(t)^{3/2}
\end{equation}
provided that $\ep$ is small enough.

Now combining (\ref{Ek-EN-bound}), (\ref{EN1-bound}) and (\ref{EN2-bound}) shows that, for $k \ge 5$,
\begin{equation}\label{Ek-bound}
\frac{d}{dt}\left| \sum_{j=2}^{[k/2]+1} c_{jk}D_{1,k,j}(t) - E_k(t)\right| \lesssim_k (1 + E_4(t)^{1/2})E_k(t)^2 + |\ep|E_k(t)^{3/2}.
\end{equation}
provided that $\ep$ and $\|g(t)\|_{H^1}/|\ep|$ are small enough. Hence
\begin{align*}
E_k(t) - E_k(0) &= \sum_{j=2}^{[k/2]+1} c_{jk}(D_{1,k,j}(t) - D_{1,k,j}(0))\\
&+ O_k(\|(1 + E_4^{1/2})E_k^2 + |\ep|E_k^{3/2}\|_{L^1([0,t])}).
\end{align*}
Similarly to (\ref{trilinear-bound}),
\[
|D_{1,k,j}(t)| = |D_{\ep(t)}[\partial_x^k\frak h(t), \partial_x^{k+2-j}\frak h(t), \partial_x^j\frak h(t)]| \lesssim_k E_k(t)^{3/2}.
\]

Now we are able to show a lifespan longer than what follows from local wellposedness. Assume that the initial data is
\[
f(x, 0) = u_\ep(x) + g(x)
\]
where $|\ep| \le \ep_0$ is small enough, the energy $E_k(0)$ computed from $g$ is $E_k(0) = \delta^2$, and $|\delta/\ep|$ is also small enough. Let
\begin{align}
\label{ft=u+g}
T^* = \sup\{&T: \exists\text{ a solution }f(x, t) = u_{\ep(t)}(x + a(t)) + g(x + a(t), t),\\
&t \in [0, T]\text{ such that }|\ep|/2 \le |\ep(t)| \le 2|\ep|,\ E_k(t) \le 4\delta^2\}.
\label{E-bound}
\end{align}
Then the above conditions hold for all $t < T^*$. Moreover, the energy estimate implies that
\[
E_k(t) = \delta^2 + O_k(\delta^3 + t(\delta^4 + |\ep|\delta^3))
= \delta^2 + O_k(\delta^3(1 + t|\ep|)).
\]
Then there is $c_k > 0$ such that if $T^* \le c_k/|\ep|\delta$, then $E_k(t) \le 2\delta^2$. Also,
\[
|\|f(x, t)\|_{L^2} - \|u_\ep\|_{L^2}|
= |\|f(x, 0)\|_{L^2} - \|u_\ep\|_{L^2}| \le \|g\|_{L^2} \lesssim \delta.
\]
by conservation of the $L^2$ norm. Meanwhile $|\|f(x, t)\|_{L^2} - \|u_{\ep(t)}\|_{L^2}| \lesssim \delta$, so $|\|u_{\ep(t)}\|_{L^2} - \|u_\ep\|_{L^2}| \lesssim \delta$. When $|\ep|$ is small enough, $\|u_\ep\|_{L^2}$ is differentiable in $\ep$ with nonzero derivative at $\ep = 0$. Since $|\delta/\ep|$ is small enough, $|\ep(t) - \ep| \lesssim \delta$.

By local wellposedness, the solution can be extended to a time $t^* > T^*$, with
\[
\|f(x, t) - f(x, T^*)\|_{H^2} \lesssim (t^* - T^*)(\|f(x, t)\|_{H^3} + \|f(x, t)\|_{H^3}^2) \le (t^* - T^*)|\ep|
\]
for $t \in [T^*, t^*]$. Then $\|f(x, t) - u_{\ep(T^*)}(x + a(T^*))\|_{H^2} \lesssim (t^* - T^*)|\ep| + \delta$. Take $t^* = T^* + \delta/|\ep|$. Then $f(x, t)$ satisfies the conditions in Proposition \ref{new-frame}, so (\ref{ft=u+g}) holds up to time $t^*$. Since $f(x, T^*)$ is small in $H^4$, $f(x, t)$ is uniformly bounded in $H^4$ on $[T^*, t^*]$, so it stays within a compact set in $H^2$. Since $\ep$ is differentiable in $f \in H^2$, $|\ep(t) - \ep(T^*)| \lesssim (t^* - T^*)|\ep| \lesssim \delta$, so $|\ep(t) - \ep| \lesssim \delta$, so $|\ep|/2 \le |\ep(t)| \le 2|\ep|$ holds up to time $t^*$. The energy estimate then implies that $E_k \le 3\delta^2$ also up to time $t^*$, so (\ref{E-bound}) holds up to time $t^*$, contradicting the definition of $T^*$. Hence the lifespan $T^* \gtrsim_k 1/|\ep|\delta$.

\subsection{Longer lifespan when $\delta \ll\ep^2$}
When the perturbation $g$ is very small  compare to $\ep^2$, i.e., $||g_0||_{H^5(\T)}=\delta\ll\ep^2$, we can obtain a longer lifespan by applying the normal form transformation to
\begin{align*}
E_{N2} &= E_N - E_{N1} = E_{N21} + E_{N22} + E_{N23} + E_{N24},\\
E_{N21}&= \sum_{j=1}^{[k/2]+1} c_{kj}'\int_0^{2\pi} \partial_x^k\frak h(t)\partial_x^{k+2-j}((\phi_\ep' \circ \phi_\ep^{-1})((1 + W_\ep)^{-1} - 1)\frak h(t))\\
&\times \partial_x^j((\phi_\ep' \circ \phi_\ep^{-1})(1 + W_\ep)^{-1}\frak h(t))dx,\\
E_{N22}&= \sum_{j=1}^{[k/2]+1} \sum_{i=1}^{k+2-j} c_{kji}\int_0^{2\pi} \partial_x^k\frak h(t)\partial_x^i(\phi_\ep' \circ \phi_\ep^{-1})\partial_x^{k+2-i-j}\frak h(t)\\
&\times \partial_x^j((\phi_\ep' \circ \phi_\ep^{-1})(1 + W_\ep)^{-1}\frak h(t))dx,\\
E_{N23}&= \sum_{j=2}^{[k/2]+1} c_{kj}\int_0^{2\pi} \partial_x^k\frak h(t)(\phi_\ep' \circ \phi_\ep^{-1} - 1)\partial_x^{k+2-j}\frak h(t)\\
&\times \partial_x^j((\phi_\ep' \circ \phi_\ep^{-1})(1 + W_\ep)^{-1}\frak h(t))dx,\\
E_{N24}&= \sum_{j=2}^{[k/2]+1} c_{kj}\int_0^{2\pi} \partial_x^k\frak h(t)\partial_x^{k+2-j}\frak h(t)\partial_x^j((\phi_\ep' \circ \phi_\ep^{-1})(1 + W_\ep)^{-1}\frak h - \frak h)dx
\end{align*}
where $c_{kj}$, $c_{kj}'$ and $c_{kji} \in \R$ are constants and we integrated by parts to get rid of the terms with $k + 1$ derivatives falling on a single factor of $\frak h$, except for the term with $j = 1$ in $E_{N21}$, in which the $k + 1$ derivatives do not matter in view of the fact that the operator $(\phi_\ep' \circ \phi_\ep^{-1})((1 + W_\ep)^{-1} - 1)$ is of class $\mathcal S$.

Now we define
\[
\begin{aligned}
D_{\ep,21}[f_1, f_2, f_3] &= \sum_{j=1\atop mnl\neq0}^{[k/2]+1} \frac{c_{kj}'}{\lambda_m(\ep) + \lambda_n(\ep) + \lambda_l(\ep)}\int_0^{2\pi} \hat f_1(m + \sgn m)e^{i(m + \sgn m)x}\\
&\times \partial_x^{k+2-j}((\phi_\ep' \circ \phi_\ep^{-1})((1 + W_\ep)^{-1} - 1)\hat f_2(n + \sgn n)e^{i(n + \sgn n)x})\\
&\times \partial_x^j((\phi_\ep' \circ \phi_\ep^{-1})(1 + W_\ep)^{-1}\hat f_3(l + \sgn l)e^{i(l + \sgn l)x}),\\
D_{21}(t) &= D_{\ep,21}[\frak h(t), \frak h(t), \frak h(t)]
\end{aligned}
\]
and similarly define $D_{22}$, $D_{23}$ and $D_{24}$. Then
\begin{align}
\label{IBP210}
\frac{d}{dt}D_{21}(t) - E_{N21}(t)
&= \ep'(t)(\partial_\ep D_{\ep,21})[\frak h(t), \frak h(t), \frak h(t)]\\
\label{IBP211}
&+ D_{\ep,21}[(\partial_t - \Lambda_\ep)\frak h(t), \frak h(t), \frak h(t)]\\
\label{IBP212}
&+ D_{\ep,21}[\frak h(t), (\partial_t - \Lambda_\ep)\frak h(t), \frak h(t)]\\
\label{IBP213}
&+ D_{\ep,21}[\frak h(t), \frak h(t), (\partial_t - \Lambda_\ep)\frak h(t)].
\end{align}
We estimate these terms one by one.

For (\ref{IBP210}), (\ref{sum-lambda'}) still holds, but there are non-trivial actions on $\frak h$ in the slots, so no frequency restriction such as (\ref{3freq}) exists. When $m + n + l \neq 0$, we are in Case 1 of Proposition \ref{non-res}, so (\ref{sum-lambda-non-res}), and hence (\ref{coef-non-res}), still hold. When $m + n + l = 0$, by Case 2 of Proposition \ref{non-res}, when $\ep$ is small enough,
\begin{equation}\label{sum-lambda-res}
|\lambda_m(\ep) + \lambda_n(\ep) + \lambda_l(\ep)| > \ep^2/5
\end{equation}
which, combined with (\ref{sum-lambda'}), shows that the multiplier in $\partial_\ep D_\ep$ is bounded by
\begin{equation}\label{coef-res}
\frac{|\lambda_m'(\ep) + \lambda_n'(\ep) + \lambda_l'(\ep)|}
{|\lambda_m(\ep) + \lambda_n(\ep) + \lambda_l(\ep)|^2}
\lesssim |\ep|^{-3}
\end{equation}
instead of (\ref{coef-non-res}). Since both $(\phi_\ep' \circ \phi_\ep^{-1})((1 + W_\ep)^{-1} - 1)/\ep$ and $\partial_\ep(\phi_\ep' \circ \phi_\ep^{-1})((1 + W_\ep)^{-1} - 1)$ are of class $\mathcal S$ uniformly in $\ep$, it follows that, for $k \ge 3$,
\begin{equation}\label{IBP210-bound}
|(\ref{IBP210})| \lesssim_k |\ep'(t)|\ep^{-2}E_k(t)^{3/2}
\lesssim \ep^{-2}E_2(t)E_k(t)^{3/2}
\end{equation}
provided that $\ep$ is small enough.

The terms (\ref{IBP211}), (\ref{IBP212}) and (\ref{IBP213}) are like (\ref{IBP1}), (\ref{IBP2}) and (\ref{IBP3}) respectively,
execpt that instead of the uniform lower bound of $\lambda_m(\ep) + \lambda_n(\ep) + \lambda_l(\ep)$ we now have (\ref{sum-lambda-res}),
which loses two factors of $\ep$, but we are helped by the $\ep$-smallness of $(\phi_\ep' \circ \phi_\ep^{-1})((1 + W_\ep)^{-1} - 1)$, which wins back a factor of $\ep$. All told we lose a factor of $\ep$ compared to (\ref{EN1-bound}), so for $k \ge 5$,
\begin{equation}\label{IBP211-bound}
|(\ref{IBP211}) + (\ref{IBP212}) + (\ref{IBP213})|
\lesssim_k |\ep|^{-1}(1 + E_4(t)^{1/2})E_k(t)^2
\end{equation}
provided that $\ep$ and $\|g(t)\|_{H^1}/|\ep|$ are small enough.

Combining (\ref{IBP210-bound}) and (\ref{IBP211-bound}) shows that for $k \ge 5$,
\begin{equation}\label{IBP21-bound}
\left| \frac{d}{dt}D_{21}(t) - E_{N21}(t) \right|
\lesssim_k |\ep|^{-1}(1 + E_4(t)^{1/2})E_k(t)^2
\end{equation}
provided that $\ep$ and $\|g(t)\|_{H^1}/|\ep|$ are small enough.
We can also save a factor of $\ep$ in the other terms $E_{N22}$, $E_{N23}$ and $E_{N24}$ thanks to the $\ep$-smallness of $(\phi_\ep' \circ \phi_\ep^{-1})'$ and $\phi_\ep' \circ \phi_\ep^{-1} - 1$. Hence the bound (\ref{IBP21-bound} also holds for $E_{N22}$, $E_{N23}$ and $E_{N24}$.

Combining (\ref{Ek-EN-bound}), (\ref{EN1-bound}) and (\ref{IBP21-bound}) shows that, for $k \ge 5$,
\begin{align*}
E_k(t) - E_k(0) &= \sum_{j=2}^{[k/2]+1} c_{jk}(D_{1,k,j}(t) - D_{1,k,j}(0))
+ \sum_{j=1}^4 (D_{2j}(t) - D_{2j}(0))\\
&+ O_k(|\ep|^{-1}\|(1 + E_4^{1/2})E_k^2\|_{L^1([0,t])})
\end{align*}
provided that $\ep$ and $\|g(t)\|_{H^1}/|\ep|$ are small enough.
Similarly to (\ref{EN2-bound}), for $k \ge 3$,
\[
|D_{2,k,j}(t)| \lesssim_k \ep(t)^{-2}|\ep(t)|E_k(t)^{3/2} = E_k(t)^{3/2}/|\ep|.
\]
Hence if $E_k(0) = \delta^2 \lesssim 1$ and $E_k \le 2\delta^2$ on $[0, t]$ then
\[
E_k(t) = \delta^2 + |\ep|^{-1}\delta^3 + O_k(t|\ep|^{-1}\delta^4).
\]
Assume $\delta/\ep^2$ is small. Then the second term on the right-hand side $\lesssim \delta^{5/2}$, so we close the estimate for a time $t \lesssim_k |\ep|/\delta^2$, which is also the lifespan in this case.

\section*{Acknowledgements}
 This work is supported in part by the Spanish Ministry of Science and Innovation,
through the “Severo Ochoa Programme for Centres of Excellence in R$\&$D”
(CEX2019-000904-S)” and MTM2017-89976-P. AC was partially supported by the Europa Excelencia program ERC2018-092824. DC and FZ were partially supported by the ERC Advanced Grant 788250.

\bibliographystyle{abbrv}


\end{document}